\def\LC{\mathcal{L}}
\def\HC{\mathcal{H}}
\def\E{\mathbf{E}}
\def\R{\mathbf{R}}
\def\Z{\mathbf{Z}}
\def\1{\mathbf{1}}
\def\al{\alpha}
\def\be{\beta}
\def\pa{\partial}
\def\ep{\epsilon}
\def\de{\delta}
\newtheorem{prop}{Proposition}[section]
\newtheorem{theorem}{Theorem}[section]
\newtheorem{corollary}{Corollary}
\newtheorem{remark}{Remark}
\newcommand{\la}{\lambda}
\newcommand{\si}{\sigma}
\newcommand{\Ga}{\Gamma}
\newcommand{\De}{\Delta}
\begin{document}
\title{Chronological operator-valued Feynman-Kac formulae for generalized fractional evolutions}

\author{Vassili N. Kolokoltsov\thanks{Department of Statistics, University of Warwick,
 Coventry CV4 7AL UK, associate member of IPI RAN Moscow,
  Email: v.kolokoltsov@warwick.ac.uk}}
\maketitle

\begin{abstract}
We study the generalized fractional linear problem $D^{\nu}_{a+*} f(x) =A(x)f(x)+g(x)$, where $D^{\nu}$ is an arbitrary mixture of Caputo derivatives
of order at most one and $A(x)$ a family of operators in a Banach space generating strongly continuous semigroups. For time homogeneous case,
when $A(x)$ does not depend on time $x$, the solution is expressed by the generalized operator-valued Mittag-Leffler function.
For the more involved time-dependent case we use the method of non-commutative operator-valued Feynman-Kac formula in combination
with the probabilistic interpretation of Caputo derivatives suggested recently by the author to find the general integral representation
of the solutions, which are new even for the case of the standard Caputo derivative $D^{\be}_{a+*}$.
In the point of view adopted here we analyse the fractional equations not as some 'exotic evolutions',
but rather as 'standard' stationary problems leading to the stationary non-commutative operator-valued Feynman-Kac representation.

\end{abstract}

{\bf Key words:}  Caputo fractional derivative, chronological exponent, Mittag-Leffler function, operator-valued Feynman-Kac formula,
boundary value problem, L\'evy subordinators.

{\bf Mathematics Subject Classification (2010)}: 34A08, 35S15, 60J50, 60J75

\section{Introduction}

Recall that a backward propagator in a Banach space $B$ (also referred to in the literature as evolutionary family
or two-parameter semigroup) is a family of bounded linear operators $U^{t,s}$, $t\le s$, such that $U^{t,t}$ is the identity for any $t$
 and the chain rule $U^{s,t}U^{t,r}=U^{s,r}$ holds for all $s\le t \ge r$. Such propagator is called strongly continuous if
 $U^{t,s}f$ is a continuous function of $t$ and $s$ for any $f\in B$, in which case, as follows from the principle of uniform boudedness,
it is locally uniformly bounded: $\|U^{t,s}\| \le C$ for $t,s$ from any compact set.

Let $B$ be a Banach space and $D$ its dense subspace. Let $A_t$ be a family of linear operators $D\to B$ and $U^{t,s}$ a strongly continuous
backward propagator in $B$ with $D$ being invariant under all $U^{t,s}$. We say that $\{A_t\}$ generates $U^{t,s}$ on $D$, if for any $f\in D$,
the equations
\begin{equation}
\label{defpropgene}
\frac{d}{dt} U^{t,s} f=-A_t U^{t,s}
\end{equation}
hold. In particular, $U^{t,s}$ is the resolving operator for the Cauchy problem
\begin{equation}
\label{eqdefgenCauchy}
\frac{d}{dt} f_t=-A_t f_t, \quad f_s=f, \quad t\le s.
\end{equation}

In case of commuting bounded operator $A_t$, the propagator $U^{t,s}$ can be  expressed as the exponent
\begin{equation}
\label{eqdefgenCauchyexp}
U^{t,s}=\exp\{ \int_t^s A_{\tau} \, d\tau \}
\end{equation}

In case of non-commuting $A_t$ this formula does not hold and the correctly modified expression is referred to as the
(backward) chronological exponential:
\begin{equation}
\label{eqdefgenCauchychrexp}
U^{t,s}=T \exp\{ \int_t^s A_{\tau} \, d\tau \}=\lim_{|\De|\to 0}  \exp\{(t_1-t_0) A_{t_0}\} \cdots \exp\{ (t_n-t_{n-1})A_{t_{n-1}}\},
\end{equation}
where $\De =\{s=t_0 <t_1 <\cdots < t_n=t\}$ is a partition of the integral $[s,t]$ and $|\De|=\max_j(t_j-t_{j-1})$.

\begin{remark} In the forward exponential that we are not considering here the order of exponents is reversed.
\end{remark}

Extending the notation used for the case of bounded $A_t$, the propagator $U^{t,s}$ is expressed as the chronological exponent
$T \exp\{ \int_t^s A_{\tau} \, d\tau \}$. This expression can be considered as a customary notation, but in fact in many cases
one can show the convergence of the r.h.s. of \eqref{eqdefgenCauchychrexp} even if $A_t$ are unbounded.

The fractional analog of the Cauchy problem \eqref{eqdefgenCauchy} represents the problem
\begin{equation}
\label{eqdefgenCauchyfrac}
D^{\be}_{a+*} f(x)=A(x) f(x)+g(x), \quad f_a=Y, \quad x\ge a,
\end{equation}
with
\begin{equation}
\label{eqCder0}
D^{\be}_{a+*}f(x)
=\frac{1}{\Ga (-\be)} \int_0^{x-a}\frac{f(x-z)-f(x)}{z^{1+\be}}dz
+\frac{f(x)-f(a)}{\Ga (1-\be) (x-a)^{\be}},
\end{equation}
denoting the Caputo derivative of order $\be \in (0,1)$. Though this is not the most standard definition of Caputo derivative,
it is also well known and derivable from the standard one by straightforward manipulations with partial integration.

There is quite a lot of research dealing with the equations where $D^{\be}_{a+*}$
is replaced by the mixture (discrete or even continuous) of the derivatives with different values of $\be$.
Extending these ideas led to the development of the generalized fractional calculus.
Usually it is developed by extending fractional integrals to the integral operators with arbitrary integral kernels
(or some of their subclasses) and then defining the fractional derivatives as the derivatives of these integral operators,
see \cite{AgrawGenerCalcVar10}, \cite{MalOdTor15}, \cite{Kir94}, \cite{Kir08}, \cite{Kir14}, \cite{Kochubei11}.
However, thus defined fractional derivative is not the inverse operator to the fractional integral.
In \cite{Ko15} a different approach to the generalized fractional operations was
suggested. Motivated by probabilistic interpretation (L\'evy processes interrupted on the attempts to cross the boundary), it starts
with the definition of a generalized fractional derivative, and the corresponding generalized fractional integral is then defined as the
corresponding potential operator, or in other words, as the right inverse operator to the fractional derivative, which represents
the integral operator with the integral kernel being the fundamental solution of the operator of generalized fractional derivative.
We follow this approach here. Namely, taking into account that the operator $-D^{\be}_{a+*}$ is the generator of the
time-inverted $\be$-stable L\'evy subordinator restricted to the space of functions that are constant to the left of $a$, it
was suggested in \cite{Ko15} that a natural general framework to deal with all extensions is to replace $-D^{\be}_{a+*}$ by the generator
of an arbitrary time-inverted L\'evy subordinator
\begin{equation}
\label{eqlevysubinv}
L'_{\nu}f(x)=\int_0^{\infty} (f(x-y)-f(x))\, \nu(dy),
\end{equation}
with $\int \min(1,y) \nu(dy) <\infty$, also restricted to the space of functions that are constant to
 the left of $a$. This leads to the extension of $D^{\be}$ in the form
\begin{equation}
\label{eqdefmixedfracderCap0}
D^{(\nu)}_{a+*}=-\int_0^{x-a} (f(x-y)-f(x))\nu (dy) -\int_{x-a}^{\infty} (f(a)-f(x))\nu (dy),
\end{equation}
and to the corresponding extension of equation \eqref{eqdefgenCauchyfrac}:
\begin{equation}
\label{eqdefgenCauchyfracgen}
D^{(\nu)}_{a+*} f(x)=A(x)f(x)+g(x), \quad f_a=Y, \quad x\ge a.
\end{equation}

The objective of this paper is two-folds. Firstly, we derive the solution
to problem \eqref{eqdefgenCauchyfracgen} for $A(x)$ not depending on $x$ in terms for arbitrary $\nu$ leading to the introduction of
 the generalized operator-valued Mittag-Leffler function. For simpler case of vanishing $g$, it writes down as
\[
f(x)=E_{(\nu),x-a}(A)Y,
\]
where the generalized operator-valued Mittag-Leffler function is defined as
\[
E_{(\nu),z}(A)= \int_0^{\infty} e^{ tA}  \frac{\pa}{\pa t}\left(  \int_z^{\infty} G_{(\nu)}(t,dy)\right) \, dt
\]
in terms of the transition density $ G_{(\nu)}(t,dy)$ of the subordinator generated by $L'_{\nu}$,
extending the corresponding well-known integral representation of the classical Mittag-Leffler function in terms of the stable densities.
This extension of the Mittag-Leffler functions is different from the extensions introduced by various authors in the context of more concrete
equations (see \cite{Kir14}) and represents a version of the extension introduced in \cite{HerHer16} in a similar context.

 The second objective is to prove the following formula for the solution of general problem
\eqref{eqdefgenCauchyfracgen} (valid under some technical assumptions on $A(x)$) in terms of the backward chronological exponential:
\begin{equation}
\label{eqtimeordFKfost0}
f(x)=Y+\E_{\nu} \int_0^{\si_a} [ T\exp\{\int_0^s A(Z_x(\tau)) \, d\tau \}(A(Z_x(t))Y+g(Z_x(t)))] \, ds,
\end{equation}
where $\E_{\nu}$ is the expectation on the paths $Z_x(t)$ of the L\'evy process generated by $L'_{\nu}$ and started at $x$,
and $\si_a$ is a stopping time, when the path $Z_x(t)$ exits the half-line $(a,\infty)$.
This formula can be looked at as the stationary version of time-ordered operator-valued Feynman-Kac formula.
By passing the Cauchy problem generated by $A(x)- D^{(\nu)}_{a+*}$ is analyzed,
whose solution is presented in terms of the corresponding non-stationary  time-ordered operator-valued Feynman-Kac formula.

At the end of the paper various particular classes of equations are presented (Schr\"odinger equations, generalized diffusions,
equations with spatially homogeneous pseudo-differential generators) that fit into the general scheme developed.

Needless to say that the general integral representations for the solutions developed above are particulary suitable both for
the development of the effective numeric schemes of approximate calculations and for the development of the theory of nonlinear
 equations, the solutions to the latter being constructed as the fixed points of integral operators.


The current literature on fractional calculus, and in particular on the equations of type \eqref{eqdefgenCauchyfrac},
is enormous, so that it is hard to present a reasonably brief review even of the papers devoted to the various kinds
of concrete equations incorporated in the general abstract framework \eqref{eqdefgenCauchyfracgen}. Therefore we refer
to some recent books on the subject and references therein: \cite{Scalabook}, \cite{KiSrTr06}, \cite{Podlub99}, \cite{Dietbook},
\cite{Umarovbook15}, \cite{PskhuRusBook}, \cite{Libook15}, \cite{HendRodBook16}, see additionally \cite{AtDoPiSt},
\cite{ChenMeNa12}, \cite{GarGuiMaiPag}, \cite{GorLuchSto}, \cite{Kochubei11}, \cite{KochKondr17}, \cite{leonmeersik13},
\cite{Pskhu11}, \cite{Scal12} for some related recent developments. We refer to books \cite{LorFeKaBook11}, \cite{DelMorbook},
\cite{GulCasbook} and extensive references therein for the general background on the Feynman-Kac formulae, and to \cite{Hagood},
\cite{Jeffr96}, \cite{Jeffr96book} for the introduction to their non-commutative versions. Here we are working with different
settings and present an independent development via the compound Poisson process approximation (finite $\nu$), where the direct
analytic construction of the path integral is available.

The paper is organized as follows. In the next two sections some tools of analysis are introduced, namely, potential measures and path integrals.
In Section \ref{secgenfrac} the generalized fractional calculus in the spirit of \cite{Ko15} are motivated and properly introduced.
In Sections \ref{secgenfraclin} and \ref{secgenfracasym} the time-homogeneous equations are analyzed leading to the solutions in terms
of the generalized operator-valued Mittag-Leffler functions. In sections \ref{secgenfractimedep} - \ref{secexam} the main results
concerning formula \eqref{eqtimeordFKfost0} are obtained and examples are presented.

\section{Preliminaries: vector-valued convolution semigroups}

Recall that the evolution equation governing the L\'evy subordinators  have the form
\begin{equation}
\label{eqlevysubo}
\dot f_t =L_{\nu}f(x)=\int_0^{\infty} (f(x+y)-f(x))\nu(dy)=\int_{-\infty}^{\infty} (f(x+y)-f(x))\nu(dy),
\end{equation}
where $\nu$ is a measure on $\R$ such that $\nu((-\infty,0])=0$ and satisfying the {\it one-sided L\'evy condition}
\begin{equation}
\label{eqalevysubo}
\int_0^{\infty} \min(1,y)\nu(dy)<\infty.
\end{equation}
 It is well known that equations \eqref{eqlevysubo} generate the Feller
 semigroups $T_t$ on $C_{\infty}(\R)$ such that
\begin{equation}
\label{eq1propLevyFellersub}
T_tf(x)=\int_0^{\infty} f(x+y)G_{(\nu)}(t,dy)=\int_{-\infty}^0 f(x-y)\tilde G_{(\nu)}(t,dy),
\end{equation}
with some probability measures $G_{(\nu)}(t,dy)$ on $\R_+$ and $\tilde G_{(\nu)}(t,dy)=G_{(\nu)}(t,d(-y)$ on $\R_-$,
so that the value of $T_tf(x)$ depends only on $f(z)$ with $z\ge x$.
The space $C^1_{\infty}(\R)$ represents an invariant core for $T_t$.
The semigroup property of $T_t$ recast in terms of the probability measures $G_{(\nu)}(t,dy)$
shows that these measures form a semigroup with respect to convolution.
Hence both $T_t$ and $G_{(\nu)}(t,dy)$ are referred to as convolution semigroups.

Symmetrically, the Cauchy problem for the equations
\begin{equation}
\label{eqlevysubo3}
\dot f_t =L'_{\nu}f(x)=\int_0^{\infty} (f(x-y)-f(x))\nu(dy),
\end{equation}
with the generator that is dual to the operator on the r.h.s. of \eqref{eqlevysubo},
have the solutions of the form
 \begin{equation}
\label{eqlevysubo4}
T_t'f(x)=\int_{-\infty}^0 f(x+y)\tilde G_{(\nu)}(t,dy)=\int_0^{\infty} f(x-y) G_{(\nu)}(t,dy).
\end{equation}

Let us stress that $G_{(\nu)}(t,dy)$ is the integral operator of the semigroup $T_t$ generated by $L_{\nu}$, and
it is the Green function of the Cauchy problem \eqref{eqlevysubo3} of the operator $L'_{\nu}$.

It is also known (see e.g. \cite{SchillingBern}) that
for any measure $\nu$  on $\{y:y>0\}$ satisfying \eqref{eqalevysubo} and any $\la \ge 0$
there exist the vague limits
\[
U^{(\nu)}_{\la}(M)=\int_0^{\infty}   e^{-\la t} G_{(\nu)}(t,M) \, dt
\]
of the measures $\int_0^K e^{-\la t}  G_{(\nu)}(t,.) \, dt$, $K\to \infty$, such that $U^{(\nu)}_{\la}(M)$
is finite for any compact $M$.
In particular, for any $z,k>0$
 \begin{equation}
\label{eqestimpotmes}
U^{(\nu)}_0([0,z]) \le e^{kz}/\phi_{\nu}(k),
\end{equation}
where
 \begin{equation}
\label{eqsymbnu}
\phi_{\nu}(\la)=\int_0^{\infty} \int (1-e^{-\la y}) \nu(dy),
\end{equation}
is the Laplace exponent of $L_{\nu}$ and of the corresponding subordinator.
The measure $U^{(\nu)}=U^{(\nu)}_0$ is called the potential measure and
$U^{(\nu)}_{\la}$ the $\la$-potential measure of the L\'evy subordinator or of the convolution semigroup $T_t$.

As the semigroups $T_t$, the potential measures satisfy the following comparison principle:
if $\nu_1(dy)\ge \nu_2(dy)$, then
\begin{equation}
\label{eqcomppotmes}
\int f(y)  U^{(\nu_1)}(dy)\ge \int f(y)  U^{(\nu_2)}(dy)
\end{equation}
for any nondecreasing $f$.

For example, if $\nu$ is finite, one finds that
\begin{equation}
\label{eqpotmesfinitenu}
\int f(y)  U^{(\nu)}(dy)=\frac{1}{\|\nu\|} f(0)
+ \sum_{k=1}^{\infty} \frac{1}{\|\nu\|^k} \int \cdots \int f(y_1+\cdots +y_k)\nu (dy_1) \cdots \nu (y_k).
\end{equation}

Consequently, by the comparison principle,
the potential measure  $U^{(\nu)}$ has an atom at zero if and only if $\nu$ is finite, in which case this atom is $\de_0/\|\nu\|$.

In the terminology of differential equations, $\tilde G_{(\nu)}(t,.)$ (resp. $G_{(\nu)}(t,.)$) is the Green function of the Cauchy
 problem for the operator $L_{\nu}$ (resp. for the operator $L'_{\nu}$), the measure $U^{(\nu)}(dy)$ on $\{y\ge 0\}$
 is the fundamental solution of the operator $-L'_{\nu}$, and the measure $U^{(\nu)}(-dy)$ on $\{y\le 0\}$ is the fundamental
  solution of the operator $-L_{\nu}$.

In the terminology of semigroups the operator $g\to \int g(x+y)U^{(\nu)}(dy)$ with the kernel $U^{(\nu)}(dy)$ is the potential operator
for the semigroup $T_t$, and the convolution operator $g\to \int g(x-y)U^{(\nu)}(dy)$
is the potential operator for the semigroup $T_t'$.
The $\la$-{\it potential measure}  of the convolution semigroup
$\{ G_{(\nu)}(t,.)\}$ represents the integral kernel of the resolvent operator $R'_{\la}$
of the semigroup $T_t'$ generated by $L'_{\nu}$. On the other hand, $R_{\la}'=(\la-L_{\nu}')^{-1}$, so that
its integral kernel $U^{(\nu)}_{\la}(dy)$ is the fundamental solution of the operator $\la-L_{\nu}'$.

\begin{prop}
\label{propLevyFellersubpotenfund}
Let the measure $\nu$  on $\{y:y>0\}$ satisfy \eqref{eqalevysubo}.

(i) For any $\la >0$, the $\la$-potential measure $U^{(\nu)}_{\la}$ represents the unique fundamental solution
of the operator $\la-L_{\nu}'$.

(ii) If the support of $\nu$ is not contained in a lattice $\{\al n, n\in \Z\}$, with some $\al>0$,
the measure $U^{(\nu)}(dy)$ represents the unique fundamental solution to the operator $-L'_{\nu}$, up to an additive constant.

(iii) Let $\{\al n, n\in \Z\}$ be the minimal lattice containing the support of $\nu$, so that for any $k\in \Z$, $k>1$, there exists
$n\in \Z$ such that $\al n$ belongs to the support of $\nu$ and $n/k \notin \Z$. Then any two fundamental solutions
to the operator $-L'_{\nu}$ differ by a linear combination of the type
\begin{equation}
\label{eq1propLevyFellersubpotenfund}
G=\sum_{n \in \Z} a_n \exp\{2\pi n ix/\al\}
\end{equation}
with some numbers $a_n$. In particular, $U^{(\nu)}(dy)$ is again the unique fundamental solution vanishing on the negative half-line.

\end{prop}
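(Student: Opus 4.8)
The plan is to move everything to the Fourier side, where the convolution operator $L'_\nu$ becomes multiplication by its symbol, and then to read off all three assertions from the zero set of that symbol. Put
\[
\psi_\nu(\xi)=\int_0^{\infty}(e^{-i\xi y}-1)\,\nu(dy)=-\phi_\nu(i\xi),
\]
so that $\widehat{L'_\nu f}(\xi)=\psi_\nu(\xi)\hat f(\xi)$ and $\mathrm{Re}\,\psi_\nu(\xi)=\int_0^{\infty}(\cos\xi y-1)\,\nu(dy)\le 0$, with equality precisely when $\cos(\xi y)=1$ for $\nu$-almost every $y$. The existence halves of all three statements are essentially the claim, already made in the text, that $U^{(\nu)}_\la$ is the kernel of $(\la-L'_\nu)^{-1}$; concretely I would integrate the forward equation $\pa_t G_{(\nu)}(t,\cdot)=L'_\nu G_{(\nu)}(t,\cdot)$ (in the space variable) against $e^{-\la t}$ over $t>0$, using $G_{(\nu)}(0,\cdot)=\de_0$ together with the vanishing of the boundary term at $t=\infty$ — immediate for $\la>0$, and for $\la=0$ because the nontrivial subordinator drifts to $+\infty$, so $G_{(\nu)}(t,\cdot)\to 0$ vaguely — to obtain $(\la-L'_\nu)U^{(\nu)}_\la=\de_0$. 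The estimate \eqref{eqestimpotmes} moreover gives $U^{(\nu)}_\la([0,z])=O(z)$, so these measures are tempered and uniqueness is naturally asked for within $\mathcal{S}'$.

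For uniqueness, let $H$ be the difference of two fundamental solutions, so $(\la-L'_\nu)H=0$ and hence $(\la-\psi_\nu(\xi))\hat H(\xi)=0$; thus $\supp\hat H\subseteq Z_\la:=\{\xi:\psi_\nu(\xi)=\la\}$. For $\la>0$ one has $|\la-\psi_\nu(\xi)|\ge\mathrm{Re}(\la-\psi_\nu(\xi))\ge\la>0$, so $Z_\la=\emptyset$, $\hat H=0$, $H=0$, which is (i). For $\la=0$ the description of $\mathrm{Re}\,\psi_\nu$ shows that $\psi_\nu(\xi)=0$ iff $\xi y\in 2\pi\Z$ for every $y\in\supp\nu$; in the non-lattice case (ii) the only such $\xi$ is $0$, whereas in case (iii), the subgroup generated by $\supp\nu$ being exactly $\al\Z$ by the minimality hypothesis, the solutions are precisely $\xi\in\frac{2\pi}{\al}\Z$.

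It remains to turn "$\supp\hat H\subseteq Z_0$" into the stated form of $H$. Since $Z_0$ is discrete, $H$ is a combination $\sum_n P_n(x)e^{2\pi inx/\al}$ of exponential polynomials (only the constant term $P_0$ in case (ii)). To eliminate nonconstant polynomial factors, note that near any $\xi_n=2\pi n/\al$ one has $\psi_\nu(\xi_n+\eta)=\psi_\nu(\eta)$, because $e^{-i\xi_n y}=1$ on $\supp\nu\subseteq\al\Z$; thus $\psi_\nu$ vanishes at every point of $Z_0$ in the same (finite-order, generally non-smooth) way as at the origin, which is incompatible with $\hat H$ carrying derivatives of $\de_{\xi_n}$ and forces the local contribution of $\hat H$ at $\xi_n$ to be a bare multiple $a_n\de_{\xi_n}$. (Equivalently, one checks by hand that no nonconstant polynomial solves $L'_\nu u=0$: already $L'_\nu x=-\int_0^{\infty}y\,\nu(dy)$ is either divergent or a nonzero constant, and likewise in higher degree.) Hence $H=\sum_n a_ne^{2\pi inx/\al}$, which is \eqref{eq1propLevyFellersubpotenfund}, reducing to $H=\mathrm{const}$ in case (ii). Finally, two fundamental solutions both vanishing on $(-\infty,0)$ differ by such a combination of distinct exponentials vanishing on a half-line, which must be identically zero by linear independence of the $e^{2\pi inx/\al}$ on any interval; so $U^{(\nu)}$, being supported on $[0,\infty)$, is the unique fundamental solution vanishing on the negative half-line. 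The step I expect to cost the most work is this last, Fourier-analytic one: fixing precisely the class in which "fundamental solution" is meant and ruling out derivatives of Dirac masses in $\hat H$ when $\psi_\nu$ fails to be smooth at its zeros (the standard Caputo case, where $|\psi_\nu(\xi)|\sim c\,|\xi|^{\be}$ near $0$, being typical).
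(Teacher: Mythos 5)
Your proposal is correct and follows essentially the same route as the paper's proof: pass to the Fourier side, observe that the transform of the difference of two fundamental solutions is supported on the zero set of the symbol $\psi_\nu$, analyse that zero set via the lattice structure of $\supp\nu$, exclude derivatives of Dirac masses using the non-vanishing (or non-existence) of $\psi_\nu'$ at its zeros, and conclude the half-line uniqueness from linear independence of exponentials. The only addition is your explicit verification of the existence half (that $U^{(\nu)}_\la$ is a fundamental solution), which the paper instead takes as given from the preliminaries.
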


\begin{proof}
(i) For any two fundamental solutions $U_1, U_2$ of $\la-L'_{\nu}$, it follows by the Fourier transform that
$(\psi_{\nu}(-p)-\la) (FG)(p)=0$ for $G=U_1-U_2$. Since $Re \, (\psi_{\nu}(-p)-\la) \le -\la < 0$ for all $p$, $FG(p)=0$ and hence $G=0$.

(ii) For any two fundamental solutions $U_1, U_2$ of $-L'_{\nu}$, it follows by the Fourier transform that
$\psi_{\nu}(-p) (FG)(p)=0$ for $G=U_1-U_2$.  Since the support of $\nu$ is not contained in a lattice, $\psi_{\nu}(-p)<0$ everywhere except at $p=0$,
because $\cos (py)-1<0$ everywhere except when $y=2\pi n/p$ with some $n\in Z$.
Hence $FG$ has support at zero. Consequently, $FG$ is a finite linear combination of
the derivatives $\de^{(j)}$ of the $\de$-function. But the derivative of $\psi_{\nu}(-p)$ at zero does not vanish, it either equals
$-i\int y \nu(dy)$, if this integral is finite, or is not finite at all, if otherwise. In both cases $FG$ can not have
other terms in the sum as $\de$-function itself. Hence $G$ is a constant, as claimed.

(iii) Under assumption of (ii), $\nu(dy)=\sum_{n>0} b_n \de_{\al n} (y)$ with some non-negative numbers $b_n$ such that
for any $k\in \Z$, $k>1$, there exists $n\in \Z$ such that $b_n \neq 0$ and $n/k \notin \Z$. Hence
\[
\psi_{\nu}(p)=  \sum_{n=1}^{\infty} b_n (e^{ip \al n}-1).
\]
Thus $\psi_{\nu}(p)=0$ for $p_m=2\pi m/\al$, $m\in \Z$. Moreover, if $p$ is not of this form, then $\psi_{\nu}(p)\neq 0$.
In fact, assuming otherwise, that is, $\psi_{\nu}(p)=0$ for some $p\neq p_m$. Then $p=(2\pi/\al)(m/k)$ with some rational number $m/k$.
Let us choose it so that the fraction $m/k$ is irreducible. Then $k>1$, since $p\neq p_m$.
Let us choose $n\in \Z$ such that $b_n \neq 0$ and $n/k \notin \Z$.
Then $p \al n=2\pi l$ with some integer $l$, and thus $m/k=l/n$. Since $m/k$ is irreducible, $n/k$ is a integer, leading
 to the contradiction. Consequently, if $\psi_{\nu}(-p) (FG)(p)=0$, then the support of $FG$ is on the lattice $\{p_m\}$.
 As in (i) we derive that the derivatives of $\de$-functions cannot enter in the formula for $FG$. Hence
 $FG(p)= \sum_{m \in \Z} a_m \de_{p_m}(p)$, implying \eqref{eq1propLevyFellersubpotenfund}. The final statement follows,
 because a linear combinations of exponents cannot vanish on the negative half-line.
\end{proof}

We are mostly interested in the Banach-valued extensions of the convolution semigroups.
The following result is obtained analogously to the real case and thus its proof is omitted.

\begin{prop}
\label{propLevyFellersubBval}
(i) Let $B$ be a Banach space. Under condition \eqref{eqalevysubo}, the operators $T_t$ and $T'_t$ given
by \eqref{eq1propLevyFellersub} and \eqref{eqlevysubo4} extend to  $C(\R,B)$ (by the same formula) and
represent strongly continuous semigroups in each of the spaces  $C^k_{\infty}(\R,B)$, $k=0,1, \cdots $ and
$C_{uc}(\R,B)$ (bounded uniformly continuous).

(ii) If $T_t^{\ep}$ and $(T_t^{\ep})'$ denote the semigroups generated by the finite approximations $\nu_{\ep}(dy)=\1_{|y|\ge \ep}\nu(dy)$
of $\nu$, then $T_t^{\ep} \to T_t$ and $(T_t^{\ep})' \to T'_t$ strongly,  as $\ep \to 0$, in each of the spaces $C^k_{\infty}(\R,B)$.

(iii) The space $C^1_{\infty}(\R,B)$ represents an invariant core for both $T_t$ and $T'_t$ in $C_{\infty}(\R,B)$.

(iv) The resolvent operators $R'_{\la}$, given by the formula
\[
 R'_{\la}f(x)=\int_0^{\infty} f(x-y) U_{\la}^{(\nu)} (dy),
 \]
 also extends to the bounded operators in $C_{\infty}(\R,B)$, so that $R'_{\la}(\la-L'_{\nu})f=f$ for any $f\in C_{\infty}(\R,B)$.
\end{prop}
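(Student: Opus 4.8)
The plan is to exploit that each $G_{(\nu)}(t,\cdot)$ (and likewise $\tilde G_{(\nu)}(t,\cdot)$) is a \emph{probability} measure, so that for $B$-valued $f$ the operators \eqref{eq1propLevyFellersub} and \eqref{eqlevysubo4} are simply Bochner-integral averages of translates of $f$. Hence each estimate from the scalar theory carries over verbatim with $|\cdot|$ replaced by $\|\cdot\|_B$; the only genuinely new point needing care is that weak convergence of the measures $G_{(\nu)}(t,\cdot)$ — whether in $t$ or in the L\'evy measure — produces only \emph{weak} convergence of the corresponding Bochner integrals in $B$, so the strong-continuity and approximation claims must be proved first on an equicontinuous subspace and only then extended by density and the uniform contraction bound. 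Throughout I would treat $T'_t$, the case of $T_t$ being identical after the reflection $y\mapsto-y$.

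For (i): contractivity $\|T'_tf\|_\infty\le\|f\|_\infty$ is immediate from $G_{(\nu)}(t,\R_+)=1$, and $\|T'_tf(x)-T'_tf(x')\|\le\omega_f(|x-x'|)$ with $\omega_f$ the modulus of continuity of $f$, so $T'_t$ is a contraction on $C_\infty(\R,B)$, on each $C^k_\infty(\R,B)$ and on $C_{uc}(\R,B)$. The defining Bochner integral exists since $y\mapsto f(x-y)$ is continuous and bounded, depends continuously on $x$ by dominated convergence, and stays in $C_\infty$ when $f$ does, after splitting $G_{(\nu)}(t,\cdot)$ into its restriction to a large compact and a tail of small mass. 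The identity $T'_tT'_s=T'_{t+s}$ is the convolution-semigroup property of $G_{(\nu)}$ together with Fubini for Bochner integrals, and $T'_t$ commutes with $d/dx$ (differentiation under the integral sign), hence leaves each $C^k_\infty(\R,B)$ invariant. For strong continuity, $\|T'_tf-f\|_\infty\le\omega_f(\de)+2\|f\|_\infty G_{(\nu)}(t,(\de,\infty))$ with $G_{(\nu)}(t,(\de,\infty))\to0$ as $t\to0$; this handles $C_{uc}(\R,B)$ directly, and together with density of $C^1_\infty(\R,B)$ in $C_\infty(\R,B)$ and $\|T'_t\|\le1$ also $C_\infty(\R,B)$, and then, applied to $f$ and its first $k$ derivatives, each $C^k_\infty(\R,B)$.

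For (ii): since $\nu_\ep(dy)=\1_{|y|\ge\ep}\nu(dy)\uparrow\nu$ as $\ep\downarrow0$, I would couple the subordinators on one probability space via the L\'evy--It\^o construction so that the path $Z^\ep$ driven by $\nu_\ep$ increases a.s.\ to the path $Z$ driven by $\nu$; then $f(x-Z^\ep_t)\to f(x-Z_t)$ pointwise and boundedly, so $(T_t^{\ep})'f(x)=\E f(x-Z^\ep_t)\to\E f(x-Z_t)=T'_tf(x)$ in $B$ by dominated convergence, for each $x$. For $f\in C^1_\infty(\R,B)$ the family $\{(T_t^{\ep})'f\}_\ep$ is equi-Lipschitz in $x$ with uniformly small tails, so this convergence is uniform, i.e.\ holds in $C_\infty(\R,B)$; the general $f\in C_\infty(\R,B)$ follows by density and $\sup_\ep\|(T_t^{\ep})'\|\le1$, and the $C^k_\infty$ statement by commuting with $d/dx$. (Equivalently one invokes Trotter--Kato, noting $L'_{\nu_\ep}\to L'_\nu$ strongly on the core $C^1_\infty(\R,B)$ by dominated convergence in \eqref{eqalevysubo}.) For (iii): $C^1_\infty(\R,B)$ is dense in $C_\infty(\R,B)$ and invariant under $T'_t$ by (i), and it lies in the domain of the generator, since for $f\in C^1_\infty(\R,B)$ the bound $\|f(x-y)-f(x)\|\le\min(y\|f'\|_\infty,2\|f\|_\infty)$ makes $L'_\nu f(x)=\int_0^\infty(f(x-y)-f(x))\,\nu(dy)$ an absolutely convergent Bochner integral by \eqref{eqalevysubo} and gives $t^{-1}(T'_tf-f)\to L'_\nu f$ in norm; a dense, semigroup-invariant subspace contained in the domain of the generator is a core.

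For (iv): $U^{(\nu)}_\la=\int_0^\infty e^{-\la t}G_{(\nu)}(t,\cdot)\,dt$ has finite total mass $\int_0^\infty e^{-\la t}\,dt=1/\la$, so $R'_\la f(x)=\int_0^\infty f(x-y)\,U^{(\nu)}_\la(dy)$ is a Bochner integral with $\|R'_\la f\|_\infty\le\|f\|_\infty/\la$; by Fubini it equals $\int_0^\infty e^{-\la t}T'_tf\,dt$, the Laplace transform of the contraction $C_0$-semigroup $T'_t$, so $R'_\la=(\la-L'_\nu)^{-1}$ and in particular $R'_\la(\la-L'_\nu)f=f$ on the generator's domain (and $(\la-L'_\nu)R'_\la f=f$ for all $f\in C_\infty(\R,B)$). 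The one real obstacle throughout is the point flagged at the start — promoting the weak convergence of $B$-valued Bochner integrals inherited from weak convergence of measures to the norm convergence actually claimed — and it is met, as above, by first passing through the equicontinuous core $C^1_\infty(\R,B)$ and only then extending by density together with the uniform contraction estimate.
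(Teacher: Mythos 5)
Your proof is correct and follows exactly the route the paper intends: the paper omits the proof of this proposition entirely, stating only that it is ``obtained analogously to the real case,'' and your argument is precisely that analogue — viewing $T_t$, $T'_t$ and $R'_\la$ as Bochner-integral averages of translates against the probability (resp.\ finite) measures $G_{(\nu)}(t,\cdot)$ and $U^{(\nu)}_\la$, and promoting pointwise/weak limits to norm limits by first working on the equicontinuous core $C^1_{\infty}(\R,B)$ and then using density plus the uniform contraction bound. In effect you have supplied the details the paper leaves out, and your added remark that $R'_\la(\la-L'_\nu)f=f$ literally requires $f$ in the domain (with $(\la-L'_\nu)R'_\la f=f$ holding for all $f$) is a more careful reading of part (iv) than the paper's own phrasing.
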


As a consequence, let us prove the following.

\begin{prop}
\label{thLevyFellersubBval}
(i) Let $A$ be an operator in $B$ that generates a strongly continuous semigroup $e^{tA}$ in $B$ with an invariant core $D$.
Let $D$ itself be a Banach space under some norm $\|.\|_D\ge \|.\|_B$
such that $e^{tA}$ is also strongly continuous in $D$ and $A$ is a bounded operator $D\to B$. Then $e^{tA}$ extends to the
strongly continuous semigroup in $C_{\infty}(\R,B)$ with the invariant core $C_{\infty}(\R,D)$.

(ii) The semigroup $e^{tA}$ and the semigroups $T'_t$ generated by $L'_{\nu}$ (as constructed in Proposition \ref{propLevyFellersubBval})
are commuting semigroups in $C_{\infty}(\R,B)$ with $C^1_{\infty}(\R,D)$ being their common invariant core. Moreover, the operator $L'_{\nu}+A$
generates a strongly continuous semigroup $T'_te^{tA}$ in the following space: (a) $C_{\infty}(\R,B)$ with the invariant core $C^1_{\infty}(\R,D)$;
(b) $C_{uc}(\R,B)$ with the invariant core $C^1_{uc}(\R,D)$ of functions from $C^1(\R,D)$, which are uniformly continuous together
 with their derivatives; (c) $C_{uc}((-\infty,b],B)$ for any $b$, with the invariant core $C^1_{uc}((-\infty,b],D)$.
\end{prop}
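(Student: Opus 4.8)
The plan is to realize both the extended semigroup and the operator sum $L'_\nu+A$ entirely through the pointwise action $(e^{tA}f)(x):=e^{tA}(f(x))$ and the product $S_t:=T'_te^{tA}$, and then to read off every assertion from the standard theory of $C_0$-semigroups.

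For part (i), well-definedness (continuity of $x\mapsto e^{tA}(f(x))$ and its vanishing at infinity follow from boundedness of $e^{tA}$), the semigroup law, and the norm bound are immediate; the one substantive point is strong continuity. I would base it on two facts: every $f\in C_\infty(\R,B)$ has relatively compact range (cut off outside an interval on which $\|f\|_B$ is small, then cover the compact image of that interval by finitely many balls, so that $\overline{f(\R)}$ is totally bounded), and a $C_0$-semigroup is uniformly strongly continuous on compact subsets of $B$. Together these give $\sup_x\|e^{tA}f(x)-f(x)\|_B\le\sup_{y\in\overline{f(\R)}}\|e^{tA}y-y\|_B\to0$. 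That $C_\infty(\R,D)$ is dense follows from density of the algebraic tensor product $C_\infty(\R)\otimes D$ (partition of unity plus density of $D$ in $B$), and it is invariant because $e^{tA}$ restricts to a bounded operator $D\to D$. To place it inside the domain of the generator $\tilde A$ of the extension with $\tilde Af=Af(\cdot)$, I would use $e^{tA}y-y=\int_0^te^{sA}Ay\,ds$ for $y\in D$ to obtain $\big\|t^{-1}(e^{tA}f(x)-f(x))-Af(x)\big\|_B\le\sup_{0\le s\le t}\|e^{sA}(Af(x))-Af(x)\|_B$, which is $o(1)$ uniformly in $x$ by the compact-range fact applied to $Af\in C_\infty(\R,B)$ (using boundedness of $A\colon D\to B$); a dense invariant subspace of the generator's domain is automatically a core, which closes (i).

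For part (ii), I would first note that $e^{tA}$ and $T'_t$ commute on $C_\infty(\R,B)$ — directly from \eqref{eqlevysubo4} and the fact that a bounded operator passes through a Bochner integral — and likewise that $L'_\nu$ and $A$ commute on $C^1_\infty(\R,D)$. The subspace $C^1_\infty(\R,D)$ is dense in $C_\infty(\R,B)$, is invariant under $e^{tA}$ (which commutes with $\pa_x$, so the estimate of (i) applies simultaneously to $f$ and to $f'$), and is invariant under $T'_t$ by Proposition \ref{propLevyFellersubBval}(iii) applied with the Banach space $D$ in place of $B$; hence it is a common invariant core. Now $S_t=T'_te^{tA}=e^{tA}T'_t$, being a product of commuting $C_0$-semigroups, is itself a $C_0$-semigroup, and to identify its generator on the core I would split $t^{-1}(S_tf-f)=t^{-1}(T'_te^{tA}f-e^{tA}f)+t^{-1}(e^{tA}f-f)$: the second term tends to $Af$ by (i), and for the first, since $e^{tA}f$ lies in the core and hence in $\mathrm{Dom}(L'_\nu)$,
\[
\frac{T'_te^{tA}f-e^{tA}f}{t}=\frac1t\int_0^tT'_sL'_\nu e^{tA}f\,ds=\frac1t\int_0^tT'_se^{tA}L'_\nu f\,ds\longrightarrow L'_\nu f,
\]
the limit holding because $\|T'_se^{tA}L'_\nu f-L'_\nu f\|\le\|T'_s\|\,\|e^{tA}L'_\nu f-L'_\nu f\|+\|T'_sL'_\nu f-L'_\nu f\|\to0$ uniformly for $s\in[0,t]$. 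Thus the generator of $S_t$ restricts to $L'_\nu+A$ on the dense $S_t$-invariant subspace $C^1_\infty(\R,D)$, which is therefore a core, and $L'_\nu+A$ (its closure) generates $T'_te^{tA}$: this is (a). For (b) and (c) I would rerun the identical argument in $C_{uc}(\R,B)$, resp.\ $C_{uc}((-\infty,b],B)$, with the corresponding $C^1_{uc}$ cores, invoking Proposition \ref{propLevyFellersubBval}(i) for strong continuity of $T'_t$ (for a convolution against a probability measure collapsing to the origin, uniform continuity of $f$ alone suffices) together with the analogous extension of $e^{tA}$ to these spaces.

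The step I expect to be the main obstacle is the strong continuity of the pointwise extension of $e^{tA}$: on $C_\infty(\R,B)$ it rests squarely on relative compactness of the range of a $C_\infty$-function, which is exactly what is no longer available on the larger spaces $C_{uc}(\R,B)$ and $C_{uc}((-\infty,b],B)$ entering (b)--(c), so those cases require a separate verification that $e^{tA}$ acts strongly continuously there. Once strong continuity of both semigroup factors is secured, the commutation, the core property, and the generator computation are the routine product-rule bookkeeping for commuting $C_0$-semigroups.
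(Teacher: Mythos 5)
Your argument is correct and shares the paper's overall skeleton (pointwise fibre action of $e^{tA}$, product of commuting semigroups, common core $C^1_{\infty}(\R,D)$), but it differs in two substantive places. First, for strong continuity of the pointwise extension you use relative compactness of the range of a $C_{\infty}(\R,B)$-function together with uniform strong continuity of a $C_0$-semigroup on compacta; the paper instead asserts "uniform in $x$ equicontinuity in $t$ of the family $e^{tA}(f(x))$" without further justification, and your compact-range argument is precisely the missing substantiation of that claim. Second, for the commutativity of $e^{tA}$ and $T'_t$ you pass the bounded operator $e^{tA}$ through the Bochner integral in the explicit convolution formula \eqref{eqlevysubo4}, whereas the paper approximates both factors by semigroups with bounded generators (Yosida approximation for $A$, truncated measures $\nu_{\ep}$ for $L'_{\nu}$) and takes limits in the commutation relation; your route is shorter and avoids the limit interchange, while the paper's is more robust when no explicit integral representation is at hand. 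Your explicit difference-quotient computation identifying the generator of $T'_te^{tA}$ on the core is also more detailed than the paper's one-line appeal to the common-core principle. Finally, you rightly flag that the compact-range device is unavailable on $C_{uc}(\R,B)$ and $C_{uc}((-\infty,b],B)$, so parts (b)--(c) need a separate verification that the fibrewise $e^{tA}$ is strongly continuous there; the paper disposes of these cases with "similarly one deals with other spaces mentioned," so your proposal is no less complete on this point, and your honest identification of where the uniform-in-$x$ estimate actually comes from is a genuine improvement in rigour.
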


\begin{proof}
(i) The operators $e^{tA}$ act in $C_{\infty}(\R,B)$ point-wise: $(e^{tA}f)(x)=e^{tA}(f(x))$.
These operators form a bounded semigroup in $C_{\infty}(\R,B)$, because $e^{tA}$ form a bounded semigroup in $B$.
To see strong continuity we note that the point-wise convergence, $e^{tA}(f(x))\to f(x)$, as $t\to 0$
for any $x$, follows from the strong continuity of $e^{tA}$ in $B$. The uniform in $x$ convergence follows then
by the uniform (in $x$) equicontinuity (in parameter $t$) of the family $e^{tA}(f(x))$. Applying the same
result for $D$, we conclude that the operators $e^{tA}$ represent  a strongly continuous semigroup also in $C_{\infty}(\R,D)$.
Finally, for any $f\in C_{\infty}(\R,B)$,
\[
\frac{1}{t}(e^{tA}f(x)-f(x))=\frac{1}{t}\int_0^t Ae^{sA}f(x)\, ds \to Af(x), \quad t\to 0,
\]
uniformly in $x$.

 (ii) The commutativity of $e^{tA}$ and $T'_t$ can be best proved by starting from their approximations
 with a bounded generator (say, the Yosida approximation for $A$ and $(T_t^{\ep})'$ for $T_t$) and then passing to the limit
in the commutation relation. From the commutativity of $T'_t$ and $e^{tA}$, it follow that the operators
$T'_te^{tA}$ form a strongly continuous semigroup in $C_{\infty}(\R,B)$.
Since both $T'_t$ and $e^{tA}$ have common core $C^1_{\infty}(\R,D)$, it follows that
$C^1_{\infty}(\R,D)$ is also a core for $T'_te^{tA}$. Similarly one deals with other spaces mentioned.
\end{proof}

\section{Preliminaries: perturbation theory and its path integral representation}

Let us recall the basic formula of the perturbation theory for linear operators.
Namely, if an operator $A$ with domain
$D_A$ generates a strongly continuous semigroup $e^{tA}$ on a Banach
space $B$ and $L$ is a bounded operator in $B$, then $A+L$ with
the same domain $D_A$ also generates a strongly continuous semigroup
$\Phi_t$ in $B$ given by the series
 \begin{equation}
\label{eqperturbseries}
 \Phi_t =T_t+\sum_{m=1}^{\infty} \int_{0\le s_1\le \cdots \le s_m\le t}
 T_{s_1}LT_{s_2-s_1} \cdots LT_{t-s_m}\, ds_1 \cdots ds_m
 \end{equation}
 converging in the operator norm.

The path space we shall work here will be the space of piecewise constant paths. Namely, a sample path $Z$ in $\R^d$
on the time interval $[0,t]$ and starting at a point $y$ is defined by a finite number, say $n$, of jump-times
$0<s_1 < ...<s_n < t$, and by jumps-sizes $z_1,...,z_n$ (each $z_j\in \R^d\setminus\{0\}$) at these times:
\begin{equation}
\label{eqgenpathforjumpproc}
Z_x(s)=x-Z^{s_1...s_n}_{z_1...z_n}(s), \quad
Z^{s_1...s_n}_{z_1...z_n}(s)=\left\{
 \begin{aligned}
 &  Z^0=0, \quad 0\le s<s_1, \\
 & Z^1=z_1, \quad s_1 \le s <s_2, \\
 & \cdots \\
 & Z^n=z_1+\cdots + z_n, \quad s_n \le s \le t.
 \end{aligned}
  \right.
 \end{equation}
Let $PC_x (s,t)$ (abbreviated to $PC_x(t)$, if $s=0$) denote the set
of all such right-continuous and piecewise-constant paths $[s,t] \mapsto \R^d$ starting from the point $x$,
 and let $PC_x^n(s,t)$ denote the subset of paths with exactly $n$ discontinuities.

Topologically, $PC_x^0$ is a point and $PC_x^n=Sim^n_t \times (\R^d)^n$, $n=1,2,...$, where
\begin{equation}
\label{eqdefsimplex}
Sim_t^n=\{ s_1,...,s_n: 0< s_1 <s_2<...<s_n < t \}
\end{equation}
denotes the standard $n$-dimensional simplex.

To each $\sigma$-finite measure $M$ on $\R^d$, there corresponds a $\sigma$-finite
measure $M^{PC}=M^{PC}(t,x)$ on $PC_x(t)$, which is defined as the sum of measures
$M_n^{PC}$, $n=0,1,...$, where each $M_n^{PC}$ is the product-measure on $PC_x^n(t)$
of the Lebesgue measure on $Sim_t^n$ and of $n$ copies of the measure $M$ on $\R^d$.
Thus if $Z$ is parametrized as in \eqref{eqgenpathforjumpproc}, then
\[
M_n^{PC} (dZ(.))=ds_1....ds_n M(dz_1)...M(dz_n),
\]
and for any measurable functional $F(Z_x(.))=\{F_n(x-Z^0,x-Z^1, \cdots , x-Z^n)\}$ on $PC_x(t)$,
given by the collection of functions $F_n$ on $\R^{dn}$, $n=0,1, \cdots$,
\[
\int_{PC_x(t)} F(Z_x(.)) M^{PC}(dZ(.))
=F(x)+\sum_{n=1}^{\infty}\int_{PC_x^n(t)}  F(Z_x(.)) M_n^{PC}(dZ(.))
\]
\begin{equation}
\label{eqdefpathintpiecewiseconst}
=\sum_{n=0}^{\infty} \int_{Sim_t^n}ds_1...ds_n \int_{\R^d}...\int_{\R^d}M(dz_1)...
M(dz_n) \, F_n(x-Z^0,x-Z^1, \cdots , x-Z^n).
\end{equation}

If the measure $M$ on $\R^d$ is finite, then so is the measure $M^{PC}=M^{PC}(t,x)$ on $PC_x(t)$ with
\[
\|M^{PC}\|=1+\sum_{n=1}^{\infty} \int_{Sim_t^n}ds_1...ds_n \int_{\R^{dn}} M(dz_1)...M(dz_n)
=e^{t\|M\|}.
\]
Hence, using the probabilistic notation $\E$ (the expectation) for the integral over the normalized
(probability) measure $\tilde M^{PC}=e^{-t\|M\|} M^{PC}$ on the path-space $PC_x(t)$,
we can write \eqref{eqdefpathintpiecewiseconst} as
\begin{equation}
\label{eqdefpathintpiecewiseconst1}
\int_{PC_x(t)} F(Z_x(.)) M^{PC}(dZ(.))=e^{t\|M\|} \int_{PC_x(t)} F(Z_x(.)) \tilde M^{PC}(dZ(.))
=e^{t\|M\|} \E \, F(Z_x(.)).
\end{equation}

Let us look now at perturbation series  \eqref{eqperturbseries} assuming that $A$ is the operator of multiplication
by the function $A(y)$ in $\R^d$ and $L$ is a bounded operator in $C(\R^d)$, that for simplicity we take to be spatially homogeneous, that is
$Lf(x)=\int f(x-y)\nu(dy)$ with a measure $\nu$ on $\R^d$ (possibly unbounded and complex-valued).

Then series  \eqref{eqperturbseries} rewrites as
\[
 \Phi_t Y(x)=e^{tA(x)}Y(x)+\sum_{m=1}^{\infty} \int_{0\le s_1\le \cdots \le s_m\le t}Y(x-z_1-\cdots - z_m) ds_1 \cdots ds_m \nu(dz_1) \cdots \nu (dz_m)
 \]
 \begin{equation}
\label{eqperturbseriespathint}
\times \exp\{ s_1 A(x)+(s_2-s_1)A(x-z_1)+\cdots +(t-s_m) A(x-z_1-\cdots - z_m) \}.
 \end{equation}
The latter exponential term can be also written as
\[
\exp\{\int_0^t A(Z_x(s)) \, ds \}.
\]
Comparing with \eqref{eqdefpathintpiecewiseconst}, we derive the following result from \cite{Ko02} (see more detail in Chapter 9 of  \cite{Ko11}).

\begin{theorem}
\label{thperturbationtheorypthint}
Let $A$ be the operator of multiplication
by the function $A(y)$ in $\R^d$ and $Lf(x)=\int f(x-y)\nu(dy)$ with a measure $\nu$ on $\R^d$. Then the convergent series
\eqref{eqperturbseries} or \eqref{eqperturbseriespathint} expressing the resolving operator for the Cauchy problem of equation $\dot f=(A+L)f$ can be represented as
the path integral of type \eqref{eqdefpathintpiecewiseconst} with
\[
F(Z_x(.))= \exp\{\int_0^t A(Z_x(s)) \, ds \} Y(Z_x(t))
\]
and $\nu$ instead of $M$.
 \end{theorem}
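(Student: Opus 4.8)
The plan is to read perturbation series \eqref{eqperturbseries} as an integral over the piecewise-constant path space introduced above. Most of the work is in fact already done in the discussion preceding the statement: \eqref{eqperturbseries} has been rewritten as \eqref{eqperturbseriespathint}, and the chronological product of exponentials appearing there has been identified with $\exp\{\int_0^t A(Z_x(s))\,ds\}$. What remains is to match \eqref{eqperturbseriespathint} with the series \eqref{eqdefpathintpiecewiseconst} that \emph{defines} the path integral, and to check that the required rearrangements are legitimate.

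Concretely, I would proceed in the following steps. First, since $A$ is multiplication by $A(y)$, record that $e^{tA}$ acts pointwise, $(e^{tA}f)(x)=e^{tA(x)}f(x)$, and that each factor $L$ in \eqref{eqperturbseries} is a convolution against $\nu$; working from the right in $T_{s_1}LT_{s_2-s_1}\cdots LT_{t-s_m}Y$ evaluated at $x$, each $T$ contributes a multiplicative exponential and each $L$ introduces one fresh variable $z_j\in\R^d$ together with a shift of the spatial argument by $-z_j$, which reproduces the $m$-th summand of \eqref{eqperturbseriespathint}, the $m=0$ term being $(T_tY)(x)=e^{tA(x)}Y(x)$. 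Second, interpret the data $(s_1,\dots,s_m;z_1,\dots,z_m)$ of that summand as the jump times and jump sizes of a path $Z\in PC_x^m(t)$ in the sense of \eqref{eqgenpathforjumpproc}: then $Z_x(s)=x-z_1-\cdots-z_j$ on $[s_j,s_{j+1})$, so the exponent in the summand is exactly $\int_0^t A(Z_x(s))\,ds$ and the argument of $Y$ is $Z_x(t)$. Third, compare with \eqref{eqdefpathintpiecewiseconst}: the $m$-th summand of \eqref{eqperturbseriespathint} is, verbatim, the integral over $PC_x^m(t)$ of the functional $F(Z_x(\cdot))=\exp\{\int_0^t A(Z_x(s))\,ds\}\,Y(Z_x(t))$ against the measure $\nu_m^{PC}$, so summing over $m\ge 0$ gives $\int_{PC_x(t)}F(Z_x(\cdot))\,\nu^{PC}(dZ(\cdot))$, which is the assertion.

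The only step that is more than bookkeeping is the justification of the termwise manipulations. Here I would invoke the hypotheses under which \eqref{eqperturbseries} is norm-convergent, namely $e^{tA}$ uniformly bounded on the ambient space and $L$ bounded, equivalently $|\nu|$ finite; the standard estimate $\|\int_{Sim_t^m}T_{s_1}L\cdots LT_{t-s_m}\,ds_1\cdots ds_m\|\le C\,\|L\|^m t^m/m!$ shows that, after inserting $Y$ and the explicit exponential kernels, all the iterated integrals in \eqref{eqperturbseriespathint} converge absolutely; Fubini's theorem then legitimizes rewriting each summand as an integral over $PC_x^m(t)$ against $\nu_m^{PC}$ and reassembling the sum into a single integral over $PC_x(t)$ against $\nu^{PC}$, exactly as in \eqref{eqdefpathintpiecewiseconst}. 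When moreover $\nu\ge 0$ is finite one may pass, as in \eqref{eqdefpathintpiecewiseconst1}, to the normalized measure $\tilde\nu^{PC}=e^{-t\|\nu\|}\nu^{PC}$ and express the result as $e^{t\|\nu\|}\,\E\big[\exp\{\int_0^t A(Z_x(s))\,ds\}\,Y(Z_x(t))\big]$. I expect no genuine obstacle beyond these routine absolute-convergence considerations; the substantive point, already flagged above, is simply that the chronologically ordered product of exponentials collapses along each piecewise-constant path to $\exp\{\int_0^t A(Z_x(s))\,ds\}$.
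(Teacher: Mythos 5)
Your argument is exactly the paper's: the derivation given there consists precisely of rewriting the perturbation series \eqref{eqperturbseries} in the form \eqref{eqperturbseriespathint}, recognizing the exponential kernel along each piecewise-constant path as $\exp\{\int_0^t A(Z_x(s))\,ds\}$, and comparing term by term with the defining series \eqref{eqdefpathintpiecewiseconst} of the path integral. Your additional remarks on absolute convergence and Fubini only make explicit what the paper leaves implicit, so the proposal is correct and follows the same route.
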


\section{Generalized fractional integration and differentiation}
\label{secgenfrac}

The fractional derivative $d^{\be}f/dx^{\be}$, $\be\in (0,1)$, was suggested as a substitute to the usual derivative $df/dx$,
which can model some kind of memory by taking into account the past values of $f$. An obvious extension widely used
in the literature represent various mixtures of such derivatives, both discrete and continuous,
\begin{equation}
\label{eqdefmixedfracder}
\sum_{j=1}^N a_j\frac{d^{\be_j}f}{dx^{\be_j}}, \quad  \int_0^1 \frac{d^{\be}f}{dx^{\be}} \mu (d\be).
\end{equation}

To take this idea further, one can observe that $d^{\be}f/dx^{\be}$ represents a weighted sum of the increments of $f$,
$f(x-y)-f(x)$, from various past values of $f$ to the 'present value' at $x$. From this point of view, the natural
class of {\it generalized mixed fractional derivative} represent the {\it causal integral operators}
\begin{equation}
\label{eqdefmixedfracder}
L_{\nu}f(x)=\int_0^{\infty} (f(x-y)-f(x))\nu (dy),
\end{equation}
with some positive measure $\nu $ on $\{y:y> 0\}$ satisfying the {\it one-sided L\'evy condition}:
 \begin{equation}
\label{eq0diftointfraceqgennu}
\int_0^{\infty} \min(1,y)\nu(y) dy<\infty,
\end{equation}
which ensures that $L_{\nu}$ is well-defined at least on the set of bounded
infinitely smooth functions on $\{y:y\ge 0\}$. The dual operators to $L_{\nu}$
 are given by the {\it anticipating integral operators}
(weighted sums of the increments from the 'present' to any point 'in future'):
\begin{equation}
\label{eqdefmixedfracderdual}
L'_{\nu}f(x)=\int_0^{\infty} (f(x+y)-f(x))\nu (dy).
\end{equation}

Of course, one can weight differently the points in past or future depending on the present position,
and one can also add a local part to complete the picture, leading to the operators
\begin{equation}
\label{eqdefmixedfracdervarl}
L^l_{\nu,b}f(x)=\int_0^{\infty} (f(x-y)-f(x))\nu (x,dy)+b(x)\frac{df}{dx},
\end{equation}
with a non-positive drift $b(x)$ and transition kernel $\nu(x,.)$ such that $\int \min(1,y)\nu(x,dy)<\infty$,
which capture in full the idea of 'weighting the past'
and which can be called the {\it one-sided}, namely {\it left-sided} or {\it causal},
{\it operators of order at most one}.\index{operator of order at most one!left-sided}
Symmetrically, one can define the {\it right-sided} or {\it anticipating}
{\it operators of order at most one}\index{operator of order at most one!right-sided} as
\begin{equation}
\label{eqdefmixedfracdervarr}
L^r_{\nu,b}f(x)=\int_0^{\infty} (f(x+y)-f(x))\nu (x,dy)-b(x)\frac{df}{dx}.
\end{equation}

General {\it operators of order at most one},\index{operator of order at most one}
which represent linear combinations of one-sided operators, and their semigroups were systematically studied in
\cite{Ko10}, \cite{Ko11}. The theory of the corresponding fractional differential equations was built in
\cite{HerHer16b} and \cite{HerHer16}.

For simplicity, let us stick here to the general mixed derivatives
\eqref{eqdefmixedfracderdual} and \eqref{eqdefmixedfracder} and use the notations
\begin{equation}
\label{eqdefmixedfracder}
\begin{aligned}
& D^{(\nu)}_+=-L'_{\nu}f(x)=-\int_0^{\infty} (f(x-y)-f(x))\nu (dy),
\\
& D^{(\nu)}_-=-L_{\nu}f(x)=-\int_0^{\infty} (f(x+y)-f(x))\nu (dy).
\end{aligned}
\end{equation}
With some abuse of notations, if $\nu$ has a density, we shall denote this density again by $\nu$.

The sign $-$ is introduced to comply with the standard notation of the fractional derivatives,
so that, for instance,
\[
\frac{d^{\be}}{dx^{\be}}f(x)=D^{\be}_{-\infty+}=D^{(\nu)}_+
\]
with $\nu(y) = -1/[\Ga (-\be)y^{1+\be}]$, because
\[
\frac{d^{\be}}{dx^{\be}}f(x)=D^{\be}_{-\infty+}f(x)
= \frac{1}{\Ga (-\be)} \int_0^{\infty}\frac{f(x-y)-f(x)}{y^{1+\be}}dy
\]
and $\Ga(-\be)<0$.

The symbols of $\Psi$DOs  $D^{(\nu)}_+$ and  $D^{(\nu)}_-$ are $-\psi_{\nu}(-p)$ and
$-\psi_{\nu}(p)$, where
\[
\psi_{\nu}(p)=\int (e^{ipy}-1) \nu(dy)
\]
is the symbol of the operator $L_{\nu}$.

If $\nu$ is finite, then the operators $D^{(\nu)}_+$  are bounded, which is not the case for the derivatives.
Thus the proper extensions of the derivatives represent only the operator $D^{(\nu)}_+$ arising from infinite measures $\nu$
satisfying \eqref{eq0diftointfraceqgennu}. The operators arising from finite $\nu$ can be better considered as analogs
of the finite differences approximating the derivatives).

The operators $D^{(\nu)}_{\pm}$ represent the extensions of the fractional derivatives $D^{\be}_{-\infty+}$
and $D^{\be}_{\infty-}$, often referred to as the derivatives in the generator form.
Looking for the corresponding extensions of the operators $D^{\be}_{a\pm}$ and $D^{\be}_{a\pm*}$
with a finite $a$ we note that $D^{\be}_{a+*}$ (resp. $D^{\be}_{a-*}$) is obtained from
$D^{\be}_{-\infty+}$ (resp. $D^{\be}_{\infty-}$) by the restriction of its action on the subspace
$C^1([a,\infty))$ (resp. $C^1((-\infty,a])$. Therefore, the analogs of the Caputo derivatives should be defined as
\begin{equation}
\label{eqdefmixedfracderCap}
\begin{aligned}
& D^{(\nu)}_{a+*}=-\int_0^{x-a} (f(x-y)-f(x))\nu (dy) -\int_{x-a}^{\infty} (f(a)-f(x))\nu (dy),
\\
& D^{(\nu)}_{a-*}=-\int_0^{a-x} (f(x+y)-f(x))\nu (dy) -\int_{x-a}^{\infty} (f(a)-f(x))\nu (dy).
\end{aligned}
\end{equation}

Let us denote by $C^k_{kill(a)}([a,\infty))$ and $C^k_{kill(a)}((-\infty,a])$ the subspaces of
$C^k([a,\infty))$ and $C^k((-\infty,a])$ respectively consisting of functions vanishing to the right or to the left of $a$.

On the other hand, the operators $D^{\be}_{a+}$ or $D^{\be}_{a-}$,
the analogs of the Riemann-Liouville derivatives, are obtained by further restricting the actions of
$D^{\be}_{-\infty+}$ and $D^{\be}_{\infty-}$  to the spaces $C^1_{kill(a)}([a,\infty))$ and $C^1_{kill(a)}((-\infty,a])$:
\begin{equation}
\label{eqdefmixedfracderRL}
\begin{aligned}
& D^{(\nu)}_{a+}=-\int_0^{x-a} (f(x-y)-f(x))\nu (y) dy+\int_{x-a}^{\infty} f(x)\nu (y) dy,
\\
& D^{(\nu)}_{a-}=-\int_0^{a-x} (f(x+y)-f(x))\nu (y) dy+\int_{x-a}^{\infty} f(x))\nu (y) dy.
\end{aligned}
\end{equation}

To see what should be the proper analog of the fractional integral, notice that, as is known (see e.g. \cite{Gel2}
or \cite{Ko15a}), the fundamental solution (vanishing on the negative half-line)
to the fractional derivative $d^{\be}/dx^{\be}$ is $U^{\be}(x)=x^{\be-1}_+/\Ga (\be)$, so that the usual fractional integral
\begin{equation}
\label{eqdeffracintaspotbe}
I_a^{\be}f(x)=\frac{1}{\Ga(\be)}\int_a^x (x-y)^{\be-1}f(y) \, dy
=\frac{1}{\Ga(\be)} \int_0^{x-a} z^{\be-1}f(x-z) \, dz
\end{equation}
is nothing else but the
potential operator of the semigroup generated by $-d^{\be}/dx^{\be}$,
or, in other words, the integral operator with the kernel being the fundamental solution
of $-d^{\be}/dx^{\be}$ (or, yet in other words, the convolution with this fundamental solution),
restricted to the space $C_{kill(a)}([a,\infty))$.

By Proposition \ref{propLevyFellersubpotenfund},
the potential measure $U^{(\nu)}(dy)$ represents the unique fundamental solution to the operator $L'_{\nu}$,
vanishing on the negative half-line. Hence the analog of the fractional integral $I^{\be}_a$ for such $\nu$
should be the potential operator of the semigroup $T_t'$ generated by  $L'_{\nu}$, that is, the convolution
with $U^{(\nu)}(dy)$ restricted to the space $C_{kill(a)}([a,\infty))$:
\begin{equation}
\label{eqdefgenfracint}
I^{(\nu)}_af(x) =\int_0^{x-a}f(x-z) U^{(\nu)}(dz).
\end{equation}

The following result corroborates this identification.

\begin{prop}
\label{propLevyFellersubpotenfund1}
(i) Let the measure $\nu$  on $\{y:y>0\}$ satisfy \eqref{eq0diftointfraceqgennu}.
For any generalized function $g\in D'(\R)$ supported on the half-line $[a, \infty)$ with any $a\in \R$,
and any $\la \ge 0$, the convolution $U_{\la}^{(\nu)} \star g$ with  the $\la$-potential measure
is a well-defined element of $D'(\R)$, which is also supported on
$[a, \infty)$. This convolution represents the unique solution (in the sense of generalized function)
of the equation $(\la-L'_{\nu})f=g$, or equivalently
\[
D^{(\nu)}_+ f=-\la f +g,
\]
supported on $[a, \infty)$.

(ii) If $\la >0$ and $g\in C_{\infty}(\R)$ and is supported on the half-line $[a, \infty)$,
that is $g\in C_{kill(a)}([a,\infty))$, then
\[
(U_{\la}^{(\nu)} \star g)(x)=R_{\la}'g(x)
=\int_{-\infty}^{\infty} g(x-y) U^{(\nu)}_{\la}(dy)
\]
\begin{equation}
\label{eq1propLevyFellersubpotenfund1}
=\int_0^{x-a} g(x-y) \int_0^{\infty} e^{-\la t}  G_{(\nu)}(t,dy) \, dt
\end{equation}
belongs to the domain of the operator $L'_{\nu}$ and thus represents the classical solution to the
equation $(\la-L'_{\nu})f=g$, or equivalently
\begin{equation}
\label{eq2propLevyFellersubpotenfund1}
D^{(\nu)}_+ f=D^{(\nu)}_{a+} f=D^{(\nu)}_{a+*} f=-\la f +g,
\end{equation}

(iii) If $\la=0$, the potential $U^{(\nu)}$ defines an unbounded operator in $C_{\infty}(\R)$.
 However, if reduced to the space  $C_{kill(a)}([a,b])$
of continuous functions on $[a,b]$ vanishing at $a$ (this space is invariant under $T_t'$ and hence under all $R'_{\la}$),
the potential operator $R'_0$ with the kernel $U^{(\nu)}$ becomes bounded and hence
\begin{equation}
\label{eq3propLevyFellersubpotenfund1}
(U^{(\nu)} \star g)(x)=R'_0g(x)=I^{(\nu)}_af(x)
\end{equation}
belongs to the domain of $L'_{\nu}$ and thus represents the classical solution to the equation
\begin{equation}
\label{eq4propLevyFellersubpotenfund1}
-L'_{\nu}f=D^{(\nu)}_+f=D^{(\nu)}_{a+} f=D^{(\nu)}_{a+*} f=g
\end{equation}
on $C_{kill(a)}([a,b])$.
\end{prop}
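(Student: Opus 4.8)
The three parts all reduce to Proposition~\ref{propLevyFellersubpotenfund} together with careful bookkeeping of supports; the plan is to prove the distributional statement (i) by a convolution identity and then upgrade it to the classical statements (ii) and (iii). For (i), note that $\la-L'_{\nu}$ is the operator of convolution with the distribution $T_{\nu,\la}:=\la\de_0-\int_0^{\infty}(\de_y-\de_0)\,\nu(dy)$, which is well defined and of order at most one by the L\'evy condition \eqref{eq0diftointfraceqgennu} and is supported on $[0,\infty)$; by Proposition~\ref{propLevyFellersubpotenfund}(i) (and by the discussion preceding it when $\la=0$) its fundamental solution is exactly $U^{(\nu)}_{\la}$, i.e. $T_{\nu,\la}\star U^{(\nu)}_{\la}=\de_0$. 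Since $U^{(\nu)}_{\la}$ is a measure finite on compacts and supported on $[0,\infty)$ while $g$ is a distribution supported on $[a,\infty)$, the convolution $U^{(\nu)}_{\la}\star g$ is well defined — for $\ph\in C^{\infty}_c$ the set of $(x,y)\in\supp g\times\supp U^{(\nu)}_{\la}$ with $x+y\in\supp\ph$ is compact — and supported on $[0,\infty)+[a,\infty)=[a,\infty)$. All distributions occurring here ($U^{(\nu)}_{\la}$, $T_{\nu,\la}$, $g$, and any $h$ below) are supported on half-lines bounded from below, so the sum map on the product of their supports is proper and their convolutions are associative and commutative. Hence $(\la-L'_{\nu})(U^{(\nu)}_{\la}\star g)=(T_{\nu,\la}\star U^{(\nu)}_{\la})\star g=\de_0\star g=g$, so $U^{(\nu)}_{\la}\star g$ is a solution, while for \emph{any} $h\in D'(\R)$ supported on $[a,\infty)$ one has $U^{(\nu)}_{\la}\star\bigl((\la-L'_{\nu})h\bigr)=(T_{\nu,\la}\star U^{(\nu)}_{\la})\star h=h$, which forces $h=U^{(\nu)}_{\la}\star g$ when $(\la-L'_{\nu})h=g$; this is the uniqueness. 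The half-line support is precisely what removes the ambiguity (additive constants, lattice exponentials) in the fundamental solution noted in Proposition~\ref{propLevyFellersubpotenfund}(ii)--(iii), since none of those distributions is supported on $[a,\infty)$.

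For (ii), let $\la>0$ and $g\in C_{kill(a)}([a,\infty))\cap C_{\infty}(\R)$. The operator $R'_{\la}$ of Proposition~\ref{propLevyFellersubBval}(iv) is the resolvent of the strongly continuous semigroup $T'_t$, so by the standard resolvent fact $R'_{\la}g$ lies in the domain of the generator $L'_{\nu}$ and $(\la-L'_{\nu})R'_{\la}g=g$ in $C_{\infty}(\R)$; by uniqueness in (i), $R'_{\la}g=U^{(\nu)}_{\la}\star g$, which is the first line of \eqref{eq1propLevyFellersubpotenfund1}, and the second line follows by inserting $U^{(\nu)}_{\la}(dy)=\int_0^{\infty}e^{-\la t}G_{(\nu)}(t,dy)\,dt$ and using $g(x-y)=0$ for $y>x-a$. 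Writing $f:=R'_{\la}g$, the support formula shows $f$ vanishes on $(-\infty,a]$ (in particular $f(a)=g(a)=0$), since $g$ does and $U^{(\nu)}_{\la}$ sits on $[0,\infty)$. Substituting $f(x-y)=0$ for $y>x-a$ and $f(a)=0$ into the defining formulae \eqref{eqdefmixedfracderRL} and \eqref{eqdefmixedfracderCap} of $D^{(\nu)}_{a+}$ and $D^{(\nu)}_{a+*}$, and recalling $D^{(\nu)}_+=-L'_{\nu}$, shows that all three operators coincide with $-L'_{\nu}f$ on $f$, which gives \eqref{eq2propLevyFellersubpotenfund1}.

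For (iii) ($\la=0$; the case $\nu=0$ being trivial), on $C_{kill(a)}([a,b])$ one has $R'_0g=I^{(\nu)}_ag(x)=\int_0^{x-a}g(x-z)\,U^{(\nu)}(dz)$, which only sees the finite mass $U^{(\nu)}([0,b-a])$, so $\|I^{(\nu)}_ag\|\le U^{(\nu)}([0,b-a])\,\|g\|$ and $R'_0$ is bounded there, whereas on $C_{\infty}(\R)$ it is unbounded because $U^{(\nu)}$ has infinite total mass (cf. the comparison principle and \eqref{eqpotmesfinitenu}). For the solution property, let $\la\downarrow0$ in (ii): the process generated by $L'_{\nu}$ is the negative of a subordinator and drifts to $-\infty$, so $G_{(\nu)}(t,[0,b-a])\to0$ as $t\to\infty$ and hence $\la R'_{\la}g=\la\int_0^{\infty}e^{-\la t}T'_tg\,dt\to0$, whence $L'_{\nu}R'_{\la}g=\la R'_{\la}g-g\to-g$; since $R'_{\la}g\to R'_0g=I^{(\nu)}_ag$ and $L'_{\nu}$ is closed, $I^{(\nu)}_ag$ lies in the domain of $L'_{\nu}$ with $-L'_{\nu}I^{(\nu)}_ag=g$, and the identification with $D^{(\nu)}_+=D^{(\nu)}_{a+}=D^{(\nu)}_{a+*}$ is exactly as in (ii). (Equivalently one may argue that the process killed on leaving $(a,\infty)$ is transient, so that $R'_0$ is its potential operator and maps into the domain of the killed generator.)

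The main obstacle is not (i), which is essentially bookkeeping once the associativity of convolutions supported on half-lines is invoked, but the passage in (ii)--(iii) from a solution in the sense of distributions to a genuinely \emph{classical} one lying in the domain of $L'_{\nu}$, together with the verification that $-L'_{\nu}$ acts as each of $D^{(\nu)}_+$, $D^{(\nu)}_{a+}$, $D^{(\nu)}_{a+*}$ on functions killed at $a$. For $\la>0$ this is delivered by the resolvent formalism of Proposition~\ref{propLevyFellersubBval}(iv); for $\la=0$, where $R'_0$ is no longer the resolvent of a Feller semigroup, one has to lean on transience of the killed process (or the above limiting argument) both to place $I^{(\nu)}_ag$ in the domain of $L'_{\nu}$ and to justify the restriction to the auxiliary space $C_{kill(a)}([a,b])$.
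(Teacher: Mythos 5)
Your proof is correct and follows the same overall skeleton as the paper's: existence in (i) via the fundamental-solution/convolution identity, (ii) via the resolvent identity $R'_{\la}=(\la-L'_{\nu})^{-1}$ together with the invariance of $C_{kill(a)}([a,\infty))$ under $T'_t$, and (iii) via finiteness of $U^{(\nu)}$ on compact intervals. You diverge in two places, both toward greater completeness. First, for uniqueness in (i) the paper leans on the Fourier-transform classification of homogeneous solutions from Proposition \ref{propLevyFellersubpotenfund} (constants and lattice exponentials, none of which can be supported on a half-line), whereas you convolve the equation with $U^{(\nu)}_{\la}$ and invoke associativity and commutativity of convolution in the algebra of distributions with supports bounded from below; this treats $\la>0$ and $\la=0$ uniformly and avoids the case analysis on $\supp \nu$, at the price of having to justify that convolution calculus (which is classical). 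Second, in (iii) the paper proves only the boundedness of $R'_0$ on $C_{kill(a)}([a,b])$ and leaves the domain and solution claims to the general principle that a bounded potential operator inverts the generator on the invariant subspace; your Abelian limit $\la\downarrow 0$, using the decay $G_{(\nu)}(t,[0,b-a])\to 0$ and the closedness of $L'_{\nu}$, makes that principle explicit. Your verification that $D^{(\nu)}_+$, $D^{(\nu)}_{a+}$ and $D^{(\nu)}_{a+*}$ coincide on functions vanishing on $(-\infty,a]$ is likewise spelled out where the paper takes it as read. One small blemish: the aside that the case $\nu=0$ is ``trivial'' is off the mark, since for $\nu=0$ the potential measure is not even locally finite and the proposition is simply vacuous there; this does not affect the argument.
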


\begin{proof}
(i) The convolution $U_{\la}^{(\nu)} \star g$ is well-defined, because of the assumptions on the support of $U_{\la}^{(\nu)}$ and $g$,
 and solves the equation $(\la-L'_{\nu})f=g$, because $U_{\la}^{(\nu)}$ is the fundamental solution.
Uniqueness follows as in Proposition \ref{propLevyFellersubpotenfund}.

(ii)  Since $L'_{\nu}$ generates a semigroup $T_t$ from \eqref{eqlevysubo4}, which preserves the spaces
 $C([a,\infty))$ and $C_{kill(a)}([a,\infty))$, these spaces are also invariant
 under the the resolvent $R_{\la}'=(\la-L_{\nu}')^{-1}$. The image of the resolvent always
coincides with the domain of the generator.
 Hence $R_{\la}'g$ belongs to the intersection of $C_{kill(a)}([a,\infty))$ and the domain of $D^{(\nu)}_+$.

(iii) The potential operator
\[
R'_0g(x)=(U^{(\nu)} \star g)(x)= \int_0^{x-a} g(x-y)U^{(\nu)}(dy)
\]
is bounded on $C_{kill(a)}([a,b])$ because $U^{(\nu)}$ is bounded on compact segments.
\end{proof}

\begin{remark}
The classical interpretation of the solution
$R_{\la}'g(x)$ is subtle for $g\in C([a,\infty))$ not vanishing at $a$.
If $\nu$ is not finite, then $R'_0g$ is continuous at zero even if $g\in C([a,b])$ does not vanish at zero.
Still it does not belong to the domain of $L_{\nu}'$. However, it can be shown to belong to the domain locally, outside
the boundary point $a$. The requirement for the solution to belong to the domain outside a boundary point is common
for the classical problems of PDEs. The following assertion illustrates this point concretely.
\end{remark}

\begin{prop}
\label{propLevyFellersubpotenfund2}
Under the assumptions of Proposition \ref{propLevyFellersubpotenfund1} let the potential measure $U^{(\nu)}(dy)$ have a continuous density,
$U^{(\nu)}(y)$, with respect to Lebesgue measure. Let $g\in C^1[a,b]$. Then the function $f=R'_0g(x)$ belongs to
$C_{kill(a)}([a,b])$ and is continuously differentiable in $(a,b]$. Consequently,
it satisfies the equation $D^{(\nu)}_{a+*} f=g$ locally, at all points from $(a,b]$.
\end{prop}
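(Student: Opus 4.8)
The plan is to establish the claimed regularity of $f=R'_0 g$ by differentiating the convolution formula
\[
f(x) = \int_0^{x-a} g(x-z) U^{(\nu)}(z)\, dz
\]
and then to deduce that $f$ lies in the domain of $L'_{\nu}$ away from the boundary point $a$, so that the equation $D^{(\nu)}_{a+*}f=g$ holds pointwise on $(a,b]$. The statement that $f\in C_{kill(a)}([a,b])$ is immediate: at $x=a$ the integral is over the empty set, so $f(a)=0$, and continuity on $[a,b]$ follows from the local boundedness of the continuous density $U^{(\nu)}$ together with dominated convergence (this is already contained in Proposition \ref{propLevyFellersubpotenfund1}(iii)).

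The first substantive step is to differentiate $f$ on $(a,b]$. Writing $f(x)=\int_0^{x-a} g(x-z)U^{(\nu)}(z)\,dz$, the upper-limit term contributes $g(a)U^{(\nu)}(x-a)$ and the integrand contributes $\int_0^{x-a} g'(x-z)U^{(\nu)}(z)\,dz$; since $g\in C^1[a,b]$ and $U^{(\nu)}$ is continuous, both pieces are continuous in $x$ on $(a,b]$, so $f\in C^1((a,b])$. (One should note that $f'$ need not extend continuously to $a$ unless $U^{(\nu)}$ is integrable near $0$, which is exactly the subtlety flagged in the preceding Remark; for $\nu$ infinite, $U^{(\nu)}(y)$ may blow up as $y\to 0$.) An equivalent and perhaps cleaner route is to use the convolution identity $f = U^{(\nu)}\star g$ on the relevant half-line and the fact, from Proposition \ref{propLevyFellersubpotenfund1}, that $U^{(\nu)}$ is the fundamental solution of $-L'_{\nu}$, then differentiate under the convolution.

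The second step is to convert $C^1$-regularity on $(a,b]$ into membership in the domain of $L'_{\nu}$ locally. Fix $x_0\in(a,b]$ and a small interval $I\ni x_0$ with $\bar I\subset(a,b]$. On $I$, $f$ is $C^1$; extend it (or rather observe that the definition of $D^{(\nu)}_{a+*}f(x)$ for $x$ near $x_0$ only involves values $f(x-y)$ for $y\in(0,x-a)$ together with the boundary value $f(a)=0$) so that $L'_{\nu}f(x)=\int_0^\infty (f(x-y)-f(x))\nu(dy)$ is well-defined: the integral near $y=0$ converges because $|f(x-y)-f(x)|\le \|f'\|_{C(\bar I)}\,y$ for small $y$ and $\int_0^1 y\,\nu(dy)<\infty$ by \eqref{eq0diftointfraceqgennu}, while the tail converges because $f$ is bounded and $\nu(\{y\ge 1\})<\infty$. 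Hence $f$ belongs to the domain of $L'_{\nu}$ at every point of $(a,b]$, and since $f(a)=0$ the three operators in \eqref{eq4propLevyFellersubpotenfund1} agree on $f$, in particular $D^{(\nu)}_{a+*}f=g$ there.

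The main obstacle is the third step: verifying that the pointwise derivative computation is consistent with $R'_0$ being the resolvent/potential operator, i.e. that the $f$ we differentiated genuinely solves $-L'_{\nu}f=g$ and not merely a formal identity. The clean way around this is to invoke Proposition \ref{propLevyFellersubpotenfund1}(i), which already asserts that $U^{(\nu)}\star g$ solves $-L'_{\nu}f=g$ in the sense of generalized functions; combining this distributional identity with the just-established local $C^1$-regularity (which places $f$ in the pointwise domain of $L'_{\nu}$ on $(a,b]$ by the convergence estimates above) upgrades the distributional solution to a classical one on $(a,b]$. The only real care needed is the interchange of $L'_{\nu}$ with the convolution defining $f$, which is justified by Fubini together with the domination $|f(x-y)-f(x)|\le \|f'\|_\infty \min(1,y)\cdot C$ uniformly for $x$ in a compact subset of $(a,b]$.
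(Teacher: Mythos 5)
Your proposal is correct and follows essentially the same route as the paper: the paper's proof consists precisely of the Leibniz differentiation you perform in your first step, yielding $(d/dx)R'_0g(x)=\int_0^{x-a}g'(x-y)U^{(\nu)}(y)\,dy+g(a)U^{(\nu)}(x-a)$, continuous for $x>a$. The additional material in your second and third steps (local membership in the domain of $L'_{\nu}$ via the bound $|f(x-y)-f(x)|\le \|f'\|\,y$ and condition \eqref{eq0diftointfraceqgennu}, and the upgrade from the distributional identity of Proposition \ref{propLevyFellersubpotenfund1}) is a correct and more explicit rendering of the ``Consequently'' clause that the paper leaves implicit.
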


\begin{proof}
From the formula for $R'_0g(x)$ it follows that
\[
(d/dx)R'_0g(x)=\int_0^{x-a}\frac{d}{dx} g(x-y) U^{(\nu)}(y) \, dy +g(a)U^{(\nu)}(x-a),
\]
which is well-defined and continuous for $x>a$.
\end{proof}

As was mentioned, the image of the resolvent coincides with the domain of the generator implying that function \eqref{eq1propLevyFellersubpotenfund1}
belongs to the domain of $L'_{\nu}$, restricted to $C_{kill(a)}([a,\infty))$, whenever $g\in C_{kill(a)}([a,\infty))$.
For other $g$ our generalized solution was defined in the sense of generalized function. As usual,
one can also introduce the notions of generalized solution by approximation. Namely,  for a measurable bounded function $g(x)$
on $[a, \infty)$, a continuous curve $f(x)$, $t\ge a$, is the {\it generalized solution by approximation}
\index{generalized solution to the Cauchy problem!by approximation} to the problem $D^{(\nu)}_+ f=-\la f +g$ on $C([a, b])$,
if there exists a sequence of the curves $g^n(.) \in C_{kill(a)}([a,b])$ such that $g^n \to g$ a.s., as $n \to \infty$,
and the corresponding classical (i.e. belonging to the domain) solutions $f^n(x)$, given by  \eqref{eq1propLevyFellersubpotenfund1}
with $g^n(x)$ instead of $g(x)$, converge point-wise to $f(t)$, as $n\to \infty$.

The following assertion is a consequence of Proposition \ref{propLevyFellersubpotenfund1}.
\begin{prop}
\label{linderfenRLgensol}
For any measurable bounded function $b(x)$ on $[a, \infty)$, formula \eqref{eq1propLevyFellersubpotenfund1}
(resp. \eqref{eq3propLevyFellersubpotenfund1}) supplies the unique  generalized solution by approximation to
problem \eqref{eq2propLevyFellersubpotenfund1} (resp. \eqref{eq4propLevyFellersubpotenfund1}) on $[a,b]$ for any $b>a$.
\end{prop}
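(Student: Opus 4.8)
The plan is to deduce the statement as a density/approximation argument layered on top of Proposition~\ref{propLevyFellersubpotenfund1}(ii)--(iii), which already furnishes the classical solution when the data lies in $C_{kill(a)}([a,b])$. First I would set $f^n = R'_0 g^n$ (or $R'_\la g^n$ in the $\la>0$ case), where $g^n\in C_{kill(a)}([a,b])$ is any sequence converging to $g$ a.e.\ with uniformly bounded $\|g^n\|_\infty$; such a sequence exists since $g$ is bounded and measurable on $[a,b]$ (mollify and multiply by a cutoff vanishing at $a$). By Proposition~\ref{propLevyFellersubpotenfund1} each $f^n$ is a classical solution belonging to the domain of $L'_\nu$ restricted to $C_{kill(a)}([a,b])$, so it suffices to show $f^n(x)\to (U^{(\nu)}\star g)(x) = \int_0^{x-a} g(x-y)\,U^{(\nu)}(dy)$ pointwise, which identifies the limit with formula~\eqref{eq3propLevyFellersubpotenfund1} (resp.~\eqref{eq1propLevyFellersubpotenfund1}). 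This convergence is immediate from dominated convergence: for fixed $x\in[a,b]$ the measure $U^{(\nu)}(dy)$ restricted to $[0,x-a]$ is finite by \eqref{eqestimpotmes}, $g^n(x-\cdot)\to g(x-\cdot)$ a.e.\ on $[0,x-a]$, and the integrands are bounded by $\sup_n\|g^n\|_\infty$ times the finite measure $U^{(\nu)}([0,x-a])$. Hence $f^n(x)\to (U^{(\nu)}\star g)(x)$ for every $x$, so the right-hand side of \eqref{eq3propLevyFellersubpotenfund1} (resp.\ \eqref{eq1propLevyFellersubpotenfund1}) is a generalized solution by approximation, establishing existence.

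For uniqueness, suppose $\tilde f$ is another generalized solution by approximation: there is $\tilde g^n\to g$ a.e.\ with classical solutions $\tilde f^n = R'_0\tilde g^n$ converging pointwise to $\tilde f$. Replacing $\tilde g^n$ by $\min(\max(\tilde g^n,-M),M)$ with $M=\sup\|g^n\|_\infty\vee\sup\|\tilde g^n\|_\infty$ if necessary (which does not affect a.e.\ convergence and keeps the truncated sequence in $C_{kill(a)}([a,b])$ after a further mild smoothing), we may assume both approximating sequences are uniformly bounded. The difference $f^n-\tilde f^n = R'_0(g^n-\tilde g^n) = \int_0^{x-a}(g^n-\tilde g^n)(x-y)\,U^{(\nu)}(dy)$, and since $g^n-\tilde g^n\to 0$ a.e.\ while remaining uniformly bounded, the same dominated convergence argument gives $f^n(x)-\tilde f^n(x)\to 0$ for each $x$. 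Therefore $\tilde f(x) = \lim \tilde f^n(x) = \lim f^n(x) = (U^{(\nu)}\star g)(x)$, so the generalized solution by approximation is unique and coincides with \eqref{eq3propLevyFellersubpotenfund1} (resp.\ \eqref{eq1propLevyFellersubpotenfund1}). The $\la>0$ case is handled identically, with $U^{(\nu)}_\la$ in place of $U^{(\nu)}$; note $U^{(\nu)}_\la([0,x-a])\le U^{(\nu)}([0,x-a])<\infty$, so the domination is even easier.

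The one genuinely delicate point — the main obstacle — is that the definition of ``generalized solution by approximation'' quantifies over \emph{all} admissible approximating sequences $g^n$, so uniqueness is not automatic from a single construction; one must show that \emph{every} such sequence yields the same pointwise limit. The truncation step is what makes this work: without a uniform bound on $\|g^n\|_\infty$ the dominated convergence argument fails, so the argument implicitly uses that the notion of generalized solution can be taken with uniformly bounded approximants (or, equivalently, that one may truncate at the level $\sup|g|$ without changing a.e.\ convergence), after which the finiteness of $U^{(\nu)}$ on compacts from \eqref{eqestimpotmes} does the rest. I would state this truncation reduction explicitly at the start of the proof so that both existence and uniqueness rest on the single dominated-convergence estimate.
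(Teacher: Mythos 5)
The paper offers no argument for this proposition beyond the single sentence that it is ``a consequence of Proposition \ref{propLevyFellersubpotenfund1}'', so your write-up is not competing with a different route in the paper --- it is supplying the details the paper omits. Your existence half is exactly the intended argument and is correct: $U^{(\nu)}([0,x-a])$ (resp. $U^{(\nu)}_{\la}([0,x-a])$) is finite by \eqref{eqestimpotmes}, so a uniformly bounded sequence $g^n\in C_{kill(a)}([a,b])$ with $g^n\to g$ a.e.\ gives $R'_0g^n(x)\to (U^{(\nu)}\star g)(x)$ pointwise by dominated convergence, and each $R'_0g^n$ is classical by Proposition \ref{propLevyFellersubpotenfund1}. (Incidentally, the ``$b(x)$'' in the statement is evidently a typo for $g(x)$, as you implicitly assume.)

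The one place where your argument does not close is the uniqueness step, and it is precisely the point you yourself flag at the end. The paper's definition of a generalized solution by approximation only asks that \emph{some} sequence $g^n\to g$ a.e.\ exist with $f^n=R'_0g^n$ converging pointwise; it imposes no uniform bound on $\|g^n\|_\infty$. Your reduction ``replacing $\tilde g^n$ by $\min(\max(\tilde g^n,-M),M)$ \ldots we may assume both approximating sequences are uniformly bounded'' is not legitimate as written: truncation produces a new sequence whose classical solutions converge to $(U^{(\nu)}\star g)(x)$, but you have not shown that $R'_0\tilde g^n - R'_0\hat g^n\to 0$, i.e.\ that the truncated sequence still converges to the \emph{same} limit $\tilde f$ as the original one --- that is exactly what dominated convergence cannot give you without the uniform bound you are trying to install. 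And the issue is not cosmetic: if $U^{(\nu)}$ has a continuous positive density, a sequence of spikes of height $n^2$ and width $n^{-2}$ in $C_{kill(a)}([a,b])$ converges to $0$ a.e.\ while $R'_0g^n(x)$ converges to a nonzero limit, so under the literal definition the generalized solution for $g=0$ would not be unique. The correct repair is the one you name: the definition must be read (or amended) so that the approximants are uniformly bounded --- natural, since $g$ itself is bounded --- and with that reading your single dominated-convergence estimate does settle both existence and uniqueness. You should state that hypothesis as part of the definition rather than as a truncation ``without loss of generality'' inside the uniqueness proof.
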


\section{Time-homogeneous case: arbitrary $\nu$}
\label{secgenfraclin}

Extending Proposition \ref{propLevyFellersubpotenfund1}, let us analyse the linear equations with a non-vanishing boundary value at $a$.

 \begin{prop}
 \label{diftointfraceqgennu}
 Let a non-negative measure $\nu $ on $\{y:y> 0\}$ satisfy \eqref{eq0diftointfraceqgennu}.

(i) If $g$ is a generalized function (from $S'(\R)$ or $D'(\R)$) vanishing to the left of $a$, then
 \begin{equation}
\label{eq1diftointfraceqgennu}
f(x)=Y +I^{(\nu)}_ag(x)= Y+ \int_0^{x-a} g(x-y) U^{(\nu)}(dy)=Y+(g\star  U^{(\nu)})(x)
\end{equation}
is the unique solution (from $S'(\R)$ or $D'(\R)$ respectively) to the equation
 \begin{equation}
\label{eq2diftointfraceqgennu}
g=D^{(\nu)}_{a+*} f=D^{(\nu)}_+f
\end{equation}
that equals to the constant $Y$ to the left of $a$.

 (ii) If $g\in C_{kill(a)}([a,b])$, then $f$ from \eqref{eq1diftointfraceqgennu} belongs to the domain of the generator
 of the semigroup $T_t$ defined either on the space $C_{uc}((-\infty, b])$ of uniformly continuous functions on
 $(-\infty, b]$ or on its subspace $C([a,b])$ of functions that are constants to the left of $a<b$. In this case
 $f$ represents the classical solution of equation \eqref{eq2diftointfraceqgennu} that equals $Y$ to the left of $a$.

(iii) If  $g\in C([a,b])$ and $\nu$ is not finite, then $f\in C(-\infty, b]$ for any $b>a$ and thus takes the
initial condition $f(a)=Y$ in the classical sense.
 \end{prop}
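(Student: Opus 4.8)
The plan is to treat the three claims in increasing order of regularity, building each on the structural facts already established about the potential measure $U^{(\nu)}$ and the semigroup $T_t'$ generated by $L'_{\nu}$. Throughout, the key observation is that $D^{(\nu)}_{a+*}$ and $D^{(\nu)}_+$ agree on functions that are constant to the left of $a$: if $f$ equals $Y$ on $(-\infty,a)$, then $f(a)=Y$ and the term $-\int_{x-a}^{\infty}(f(a)-f(x))\nu(dy)$ in \eqref{eqdefmixedfracderCap} coincides with $-\int_{x-a}^{\infty}(f(x-y)-f(x))\nu(dy)$, so the two operators produce the same value. Hence it suffices to solve $D^{(\nu)}_+ f = g$ with $f\equiv Y$ left of $a$, and the boundary term disappears after a translation: writing $f = Y + h$ with $h$ vanishing to the left of $a$, the equation becomes $D^{(\nu)}_+ h = -L'_{\nu} h = g$, since $D^{(\nu)}_+$ annihilates constants.

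For part (i), I would invoke Proposition \ref{propLevyFellersubpotenfund1}(i) directly: for $g\in D'(\R)$ (or $S'(\R)$) supported on $[a,\infty)$, the convolution $U^{(\nu)}\star g = I^{(\nu)}_a g$ is a well-defined generalized function supported on $[a,\infty)$ and is the unique solution of $-L'_{\nu} h = g$ supported there, because $U^{(\nu)}$ is the unique fundamental solution of $-L'_{\nu}$ vanishing on the negative half-line (Proposition \ref{propLevyFellersubpotenfund}(ii),(iii)). Adding back the constant $Y$ gives \eqref{eq1diftointfraceqgennu}, and uniqueness of $f$ among distributions equal to $Y$ left of $a$ follows because the difference of two such solutions is a distribution supported on $[a,\infty)$ annihilated by $L'_{\nu}$, hence zero by the cited uniqueness.

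For part (ii), with $g\in C_{kill(a)}([a,b])$, the point is to upgrade the distributional solution to a classical one. Here I would use that $I^{(\nu)}_a g = R'_0 g$ is the value at $\lambda=0$ of the resolvent $R'_\lambda$ of $T_t'$, and that the image of the resolvent coincides with the domain of the generator; the space $C([a,b])$ of functions constant to the left of $a$ (and its ambient $C_{uc}((-\infty,b])$) is invariant under $T_t'$, as noted after \eqref{eqlevysubo4} and in Proposition \ref{propLevyFellersubpotenfund1}(iii). Boundedness of $R'_0$ on this space comes from $U^{(\nu)}$ being finite on compact segments (estimate \eqref{eqestimpotmes}). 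Thus $h = R'_0 g$ lies in the domain of $L'_{\nu}$ restricted to this space, so $f = Y+h$ is a genuine pointwise solution of $D^{(\nu)}_{a+*} f = g$, and equals $Y$ left of $a$ by construction. Part (iii) is the mildest: when $\nu$ is infinite, $U^{(\nu)}$ has no atom at zero (the remark following \eqref{eqpotmesfinitenu}), so $R'_0 g(x) = \int_0^{x-a} g(x-y)\,U^{(\nu)}(dy) \to 0$ as $x\downarrow a$ by dominated convergence (the mass $U^{(\nu)}([0,x-a])$ shrinks to $0$), giving continuity of $f$ across $a$ and hence $f(a)=Y$ classically, even though $f$ need not be in the domain at the boundary point itself.

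The main obstacle is the upgrade in part (ii): identifying $R'_0 g$ with an element of the domain of $L'_{\nu}$ requires care because $R'_0 = R'_\lambda|_{\lambda=0}$ is genuinely unbounded on $C_\infty(\R)$ and only becomes bounded after restriction to $C_{kill(a)}([a,b])$ (or $C_{uc}((-\infty,b])$); one must check that this restricted operator really does land in the domain of the restricted generator, i.e. that the resolvent identity $R'_0(-L'_{\nu})h = h$ and $(-L'_{\nu})R'_0 g = g$ hold on the restricted space. This is where Proposition \ref{propLevyFellersubpotenfund1}(iii) does the real work, and I would lean on it rather than re-deriving the semigroup theory; the remaining verifications — that the boundary term in $D^{(\nu)}_{a+*}$ vanishes for constant-left functions, and the dominated-convergence argument in (iii) — are routine.
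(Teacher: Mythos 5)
Your proposal is correct and follows essentially the same route as the paper: reduce to $-L'_{\nu}h=g$ by the shift $f=Y+h$, use the uniqueness of the fundamental solution (Proposition \ref{propLevyFellersubpotenfund}) for the distributional claim, the identification of the image of the bounded potential operator on $C_{kill(a)}([a,b])$ with the domain of the generator for (ii), and the absence of an atom of $U^{(\nu)}$ at the origin for (iii). You merely spell out details the paper leaves implicit (the agreement of $D^{(\nu)}_{a+*}$ with $D^{(\nu)}_+$ on functions constant left of $a$, and the dominated-convergence step in (iii)).
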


\begin{remark}
If $\nu$ is finite and $g(a)\neq 0$, then $f$ has a discontinuity at $a$, as in this case the limit of $f$ from the right at $a$ equals $Y+g(a)\|\nu\|$.
 \end{remark}

\begin{proof} (i) By Propositions \ref{propLevyFellersubpotenfund},
for any $g\in S'(\R)$ supported on $[a,\infty)$, the function $I^{(\nu)}_ag(x)$ is the unique, up to an additive constant,
 solution to the equation $g=D^{(\nu)}_{a+*} f$, in the sense of generalized functions, which is a constant to the left of $a$.
Thus adding $Y$ fixes the initial condition in the unique way.

(ii) As in  Proposition \ref{propLevyFellersubpotenfund1}, this follows from the fact that the image of the potential operator, when it is bounded,
coincides with the domain.

(iii) This holds because $U^{(\nu)}$ has no atoms at the origin.
\end{proof}

\begin{prop}
\label{lindergenCapreal}
Let the measure $\nu$  on $\{y:y>0\}$ satisfy \eqref{eqalevysubo} and let $\la >0$.

(i) For any  $g \in C_{\infty}[a, \infty)$ (considered as the element of $C_{uc}(\R)$ by extending it to the left
 of $a$ by the constant $g(a)$), the function
 \begin{equation}
\label{eq1lindergenCapreal}
f(x)=\int_0^{x-a} g(x-y) U_{\la}^{(\nu)}(dy)=(g\star  U_{\la}^{(\nu)})(x)
\end{equation}
is the unique  solution to the equation
  \begin{equation}
\label{eqlinder1unbgenreal0}
D^{(\nu)}_{a+*}f(x)=-\la f(x)+g(x)
\end{equation}
in the domain of the generator of the semigroup $T_t$ on $C_{uc}(\R)$. This function equals $g(a)/\la$ to the left of $a$.

(ii) For any $g\in S'(\R)$ that is constant to the left of $a$, the generalized function $g\star  U_{\la}^{(\nu)}$
is a well-defined element of $S'(\R)$, and it represents the unique solution to equation \eqref{eqlinder1unbgenreal0}
in the sense of generalized functions.
\end{prop}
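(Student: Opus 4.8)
The plan is to reduce both parts to the resolvent theory of the semigroup $T'_t$ generated by $L'_{\nu}$ (this is the semigroup written $T_t$ in the statement), using throughout the elementary fact that on functions that are constant on $(-\infty,a]$ the operator $D^{(\nu)}_{a+*}$ coincides on $[a,\infty)$ with $D^{(\nu)}_+=-L'_{\nu}$. Indeed, if $f\equiv f(a)$ on $(-\infty,a]$ and $x\ge a$, then in
\[
D^{(\nu)}_+f(x)=-\int_0^{\infty}(f(x-y)-f(x))\,\nu(dy)
\]
one may split the integral at $y=x-a$ and, on the set $y>x-a$, replace $f(x-y)$ by $f(a)$, which turns it into $D^{(\nu)}_{a+*}f(x)$. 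Hence \eqref{eqlinder1unbgenreal0}, restricted to this class, becomes the resolvent equation $(\la-L'_{\nu})f=g$.

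For part (i): since $\la>0$ and $T'_t$ is a strongly continuous contraction semigroup on $C_{uc}(\R)$ (Proposition \ref{propLevyFellersubBval}(i); the contraction property because $G_{(\nu)}(t,\cdot)$ is a probability measure), the resolvent $R'_{\la}=(\la-L'_{\nu})^{-1}$ is bounded on $C_{uc}(\R)$ with range equal to the domain of $L'_{\nu}$, and $R'_{\la}g=\int_0^{\infty}e^{-\la t}T'_tg\,dt$. Inserting $T'_tg(x)=\int_0^{\infty}g(x-y)G_{(\nu)}(t,dy)$ and applying Fubini (legitimate since $|g|\le\|g\|$ and $\int_0^{\infty}e^{-\la t}G_{(\nu)}(t,[0,\infty))\,dt=1/\la$) identifies $f:=R'_{\la}g$ with the function in \eqref{eq1lindergenCapreal}, that is $f=g\star U_{\la}^{(\nu)}$. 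For $x\le a$ every point $x-y$ with $y\ge 0$ lies in $(-\infty,a]$, where the extended $g$ equals $g(a)$, so $f(x)=g(a)\,U_{\la}^{(\nu)}([0,\infty))=g(a)/\la$; thus $f$ is constant on $(-\infty,a]$. Being in the domain of $L'_{\nu}$, $f$ solves $(\la-L'_{\nu})f=g$, i.e. $D^{(\nu)}_+f=-\la f+g$, and by the reduction above $D^{(\nu)}_{a+*}f=-\la f+g$ on $[a,\infty)$. For uniqueness within the natural class (functions in the domain of $L'_{\nu}$ that are constant on $(-\infty,a]$): such a solution is continuous, so its constant left value equals its value at $a$; evaluating \eqref{eqlinder1unbgenreal0} at $x=a$, where $D^{(\nu)}_{a+*}f(a)=0$, forces that value to be $g(a)/\la$, so the difference of two solutions lies in the domain of $L'_{\nu}$ and is annihilated by $\la-L'_{\nu}$ on all of $\R$, hence vanishes because $\la$ is in the resolvent set.

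For part (ii): let $c$ be the constant value of $g$ on $(-\infty,a)$ and write $g=c\1+g_0$, so $g_0:=g-c\1$ is supported in $[a,\infty)$. Since $U_{\la}^{(\nu)}$ is a finite measure of total mass $1/\la$ supported in $[0,\infty)$, the convolution $g_0\star U_{\la}^{(\nu)}$ is a well-defined tempered distribution supported in $[a,\infty)$, while $\1\star U_{\la}^{(\nu)}=(1/\la)\1$; hence $g\star U_{\la}^{(\nu)}=(c/\la)\1+g_0\star U_{\la}^{(\nu)}\in S'(\R)$ is constant, equal to $c/\la$, on $(-\infty,a)$. Because $U_{\la}^{(\nu)}$ is the fundamental solution of $\la-L'_{\nu}$ (Proposition \ref{propLevyFellersubpotenfund}(i)) and $L'_{\nu}$ is translation invariant, $(\la-L'_{\nu})(g\star U_{\la}^{(\nu)})=g\star(\la-L'_{\nu})U_{\la}^{(\nu)}=g\star\de_0=g$; combined with the constancy on $(-\infty,a)$ and the reduction above this gives $D^{(\nu)}_{a+*}f=-\la f+g$ for $f=g\star U_{\la}^{(\nu)}$. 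Uniqueness repeats the Fourier-transform argument from the proof of Proposition \ref{propLevyFellersubpotenfund}(i): the difference $\phi$ of two tempered solutions satisfies $(\la-L'_{\nu})\phi=0$, hence $(\la-\psi_{\nu}(-p))\,\widehat{\phi}=0$; since $\mathrm{Re}\,(\la-\psi_{\nu}(-p))\ge\la>0$ everywhere, the symbol never vanishes and $\widehat{\phi}=0$.

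The routine parts are the Laplace/Fubini identification in (i) and the real-part bound for the symbol in the uniqueness arguments. The step requiring the most care is the distributional algebra in part (ii): verifying that $g\star U_{\la}^{(\nu)}$ is genuinely a tempered distribution and that $\la-L'_{\nu}$ may be commuted through the convolution to recover $g$. This rests on the supports $[a,\infty)$ of $g-c\1$ and $[0,\infty)$ of $U_{\la}^{(\nu)}$ lying in a common half-line (so the convolution is defined and supported in $[a,\infty)$) and on $U_{\la}^{(\nu)}$ being a finite measure (so that convolution of a tempered distribution against it is legitimate and associative), with the constant part $c\1$ treated separately via $L'_{\nu}\1=0$.
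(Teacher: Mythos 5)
Your proposal is correct and follows essentially the same route as the paper: part (i) is the observation that $f=R'_{\la}g$ is the resolvent applied to $g$, hence lies in the domain and solves $(\la-L'_{\nu})f=g$, and part (ii) is an appeal to the fundamental-solution and Fourier-uniqueness argument of Proposition \ref{propLevyFellersubpotenfund}(i). You merely spell out details the paper leaves implicit (the Fubini identification of $R'_{\la}$ with convolution against $U_{\la}^{(\nu)}$, the reduction of $D^{(\nu)}_{a+*}$ to $D^{(\nu)}_+$ on functions constant left of $a$, and the uniqueness normalizations), all of which are sound.
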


\begin{proof}
(i) By \eqref{eq1lindergenCapreal}, $f=R'_{\la}g$ is obtained by applying the resolvent to $g$.
Hence it belongs to the domain of $L_{\nu}$ and solves the equation $(\la -L'_{\nu})f=g$.
(ii) This follows from Proposition \ref{propLevyFellersubpotenfund} (i).
\end{proof}

As above, one can also interpret formula  \eqref{eq1lindergenCapreal} in the sense of generalized solutions by approximation.
However, function \eqref{eq1lindergenCapreal} is not the solution we are mostly interested in, as it prescribes the boundary
value at $a$, rather than solves the boundary value problem.

The most straightforward way to deal properly with the problem
 \begin{equation}
\label{eqlinder1unbgenreal}
D^{(\nu)}_{a+*}f(x)=-\la f(x)+g(x), \quad f(a)=Y, \quad x\ge a,
\end{equation}
is by turning it to the problem with the vanishing boundary value, which is a usual trick in the theory of PDEs.
Namely, introducing the new unknown function $u=f-Y$ we see that $u$ must solve the problem
 \begin{equation}
\label{eqlinder1unbgenrealRLshift}
D^{(\nu)}_{a+}u(x)=-\la u(x)-\la Y +g(x), \quad u(a)=0, \quad x\ge a,
\end{equation}
just with $g-\la Y$ instead of $g$. We can thus {\it define the solution to} \eqref{eqlinder1unbgenreal} to be
the function $f=u+Y$, where $u$ solves \eqref{eqlinder1unbgenrealRLshift}.
Such definition also complies with one of the definition of $D^{(\nu)}_{a+*}$ as given by
$D^{(\nu)}_{a+*}f=D^{(\nu)}_{a+*}(f-f(a))$.

Taking first $g=0$ we find the solution to \eqref{eqlinder1unbgenreal} to be
\[
f(x)=Y+u(x)=Y-\la Y \int_0^{x-a} \int_0^{\infty} e^{-\la t}  G_{(\nu)}(t,dy) \, dt
\]
 \begin{equation}
\label{eqlinder1unbgenrealsol0}
=\la Y \int_0^{\infty} e^{-\la t}  \left(  \int_{x-a}^{\infty} G_{(\nu)}(t,dy)\right) \, dt.
\end{equation}
Integrating by parts we get for $x>a$ an alternative expression:
 \begin{equation}
\label{eqlinder1unbgenrealsol}
f(x)=Y \int_0^{\infty} e^{-\la t}  \frac{\pa}{\pa t}\left(  \int_{x-a}^{\infty} G_{(\nu)}(t,dy)\right) \, dt.
\end{equation}

Restoring $g$ we arrive at the following.

\begin{prop}
\label{lindergenCaprealviashift}
For any $g$ supported on $[a, \infty)$ the unique solution to problem \eqref{eqlinder1unbgenreal}
in the sense defined above is given by the formula
 \[
f(x)=Y \int_0^{\infty} e^{-\la t}  \frac{\pa}{\pa t}\left(  \int_{x-a}^{\infty} G_{(\nu)}(t,dy)\right) \, dt
\]
 \begin{equation}
\label{eqlinder1unbgenrealsol}
+\int_0^{x-a} g(x-y) \int_0^{\infty} e^{-\la t}  G_{(\nu)}(t,dy) \, dt
\end{equation}
This solution can be classified as classical (from the domain of the generator) or generalized
(in the sense of the generalized functions or by approximation) according to Proposition \ref{propLevyFellersubpotenfund1}
applied to problem \eqref{eqlinder1unbgenrealRLshift}.
\end{prop}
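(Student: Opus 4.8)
The plan is to obtain the formula as a direct consequence of the shift trick set up just before the statement, combined with Proposition \ref{propLevyFellersubpotenfund1} and the integration-by-parts identity already performed in \eqref{eqlinder1unbgenrealsol0}. First I would put $u=f-Y$. By the very definition adopted above \eqref{eqlinder1unbgenrealsol}, a function $f$ that is constant $=Y$ to the left of $a$ solves \eqref{eqlinder1unbgenreal} precisely when $u$ (which then vanishes to the left of $a$) solves the Riemann--Liouville type problem \eqref{eqlinder1unbgenrealRLshift}, i.e.
\begin{equation*}
D^{(\nu)}_{a+}u=-\la u+\tilde g,\qquad \tilde g:=g-\la Y\,\1_{\{x\ge a\}},
\end{equation*}
whose right-hand side is again supported on $[a,\infty)$. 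Since $\tilde g$ is a legitimate source, Proposition \ref{propLevyFellersubpotenfund1} (supplemented by Proposition \ref{linderfenRLgensol} when $\tilde g$ is merely bounded measurable or fails to vanish at $a$) gives the unique solution as the convolution $u=U^{(\nu)}_{\la}\star\tilde g$, read in the appropriate classical or generalized sense, so that $f=Y+U^{(\nu)}_{\la}\star\tilde g$.

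Next I would unwind this by linearity, writing $U^{(\nu)}_{\la}\star\tilde g=U^{(\nu)}_{\la}\star g-\la Y\,\bigl(U^{(\nu)}_{\la}\star\1_{\{x\ge a\}}\bigr)$ and using the $\la$-potential representation $U^{(\nu)}_{\la}(dz)=\int_0^{\infty}e^{-\la t}G_{(\nu)}(t,dz)\,dt$. The first summand is already the $g$-term $\int_0^{x-a}g(x-z)\int_0^\infty e^{-\la t}G_{(\nu)}(t,dz)\,dt$ of the asserted formula. For the second summand I would compute $\la Y\int_0^{x-a}\int_0^\infty e^{-\la t}G_{(\nu)}(t,dz)\,dt$, exploit that each $G_{(\nu)}(t,\cdot)$ is a probability measure on $\R_+$ to replace $\int_0^{x-a}G_{(\nu)}(t,dz)$ by $1-\int_{x-a}^\infty G_{(\nu)}(t,dz)$, and use $\int_0^\infty\la e^{-\la t}\,dt=1$; the terms $Y$ and $-\la Y\int_0^{x-a}U^{(\nu)}_{\la}(dz)$ then collapse to $\la Y\int_0^\infty e^{-\la t}\bigl(\int_{x-a}^\infty G_{(\nu)}(t,dz)\bigr)\,dt$, which is exactly \eqref{eqlinder1unbgenrealsol0}. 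Finally, for $x>a$ the tail $h(t):=\int_{x-a}^\infty G_{(\nu)}(t,dz)$ is bounded and satisfies $h(0)=0$ (since $G_{(\nu)}(0,\cdot)=\de_0$), so an integration by parts in $t$, with no boundary contribution at either end when $\la>0$, converts this into the first term $Y\int_0^\infty e^{-\la t}\frac{\pa}{\pa t}\bigl(\int_{x-a}^\infty G_{(\nu)}(t,dz)\bigr)\,dt$. Uniqueness transfers back through the bijection $f\leftrightarrow u=f-Y$ from the uniqueness part of Propositions \ref{propLevyFellersubpotenfund1} and \ref{linderfenRLgensol}, and the classification of $f$ as classical or generalized is read off by applying those propositions to the source $\tilde g=g-\la Y$ in \eqref{eqlinder1unbgenrealRLshift}.

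The routine analytic points are the Fubini interchange between the $t$-integral and the convolution integral, and the integration by parts in $t$ (the quantity $\frac{\pa}{\pa t}\bigl(\int_{x-a}^\infty G_{(\nu)}(t,dz)\bigr)$ being interpreted, if need be, as a signed measure in $t$); these are justified by the local finiteness of $U^{(\nu)}_{\la}$ on compacts, estimate \eqref{eqestimpotmes}, together with dominated convergence and the hypothesis $\la>0$ that kills the boundary term at $t=\infty$. I expect the only genuinely delicate point to be the one already flagged in the remark following Proposition \ref{propLevyFellersubpotenfund1}: when $Y\neq0$ the shifted source $\tilde g=g-\la Y$ need not vanish at $a$, so $u=U^{(\nu)}_{\la}\star\tilde g$ need not lie in the domain of $L'_{\nu}$ globally, and the word ``classical'' has to be understood either locally on $(a,b]$ (Proposition \ref{propLevyFellersubpotenfund2}) or in the ``by approximation'' sense (Proposition \ref{linderfenRLgensol}); keeping track of which regime applies for a given $g$ is precisely what the closing sentence defers to Proposition \ref{propLevyFellersubpotenfund1}. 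I would also note, as in the remark preceding the statement and formula \eqref{eqpotmesfinitenu}, that for finite $\nu$ the atom $\de_0/\|\nu\|$ of $U^{(\nu)}$ produces a jump of $f$ at $a$ of size $g(a)\|\nu\|$, so ``$f(a)=Y$'' is meant as the left-hand value; and that for $\la=0$ the first term vanishes and the statement collapses to Proposition \ref{diftointfraceqgennu} on $[a,b]$.
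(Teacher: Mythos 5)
Your proposal is correct and follows essentially the same route as the paper: the substitution $u=f-Y$ reducing to \eqref{eqlinder1unbgenrealRLshift}, the convolution with the $\la$-potential measure from Proposition \ref{propLevyFellersubpotenfund1}, the use of $\int_0^{x-a}G_{(\nu)}(t,dy)=1-\int_{x-a}^{\infty}G_{(\nu)}(t,dy)$ together with $\la\int_0^{\infty}e^{-\la t}\,dt=1$ to produce \eqref{eqlinder1unbgenrealsol0}, and the final integration by parts in $t$. Your additional remarks on the Fubini interchange, the non-vanishing of $\tilde g$ at $a$, and the atom of $U^{(\nu)}$ for finite $\nu$ are consistent with the surrounding discussion in the paper.
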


As for  $L'_{\nu}=-d^{\be}/dx^{\be}$, the coefficient at $Y$ for $x-a=1$, $E_{\be}(-\la)$, represents
the  Mittag-Leffler function of index $\be$, one can define the analog of the {\it Mittag-Leffler function}
 for arbitrary $\nu$ as
 \[
E_{(\nu)}(-\la)= \int_0^{\infty} e^{-\la t}  \frac{\pa}{\pa t}\left(  \int_1^{\infty} G_{(\nu)}(t,dy)\right) \, dt
\]
 \begin{equation}
\label{eqdefgenML}
=\la \int_0^{\infty} e^{-\la t}  \left(  \int_1^{\infty} G_{(\nu)}(t,dy)\right) \, dt
=1-\la \int_0^{\infty} e^{-\la t}  \left(  \int_0^1 G_{(\nu)}(t,dy)\right) \, dt.
\end{equation}
the function $ \int_{x-a}^{\infty} G_{(\nu)}(t,dy)$ increases with $t$.
Hence its derivative is well-defined as a positive measure (and as a function almost everywhere),
and therefore the function $E_{(\nu)}(-\la)$ is a completely monotone function of $\la$.
This function is well defined and continuous for $Re \, \la \ge 0$, as there it is bounded by $1$:
 \begin{equation}
\label{eqdefgenMLest}
|E_{(\nu)}(-\la)| \le \int_0^{\infty} \frac{\pa}{\pa t}\left(  \int_1^{\infty} G_{(\nu)}(t,dy)\right) \, dt
=\left.\left(  \int_1^{\infty} G_{(\nu)}(t,dy)\right)\right|_0^{\infty}=1.
\end{equation}
Moreover, $E_{(\nu)}(0)=1$.

In fact, one can define the family of these Mittag-Leffler functions depending on the positive parameter $z$ as
  \begin{equation}
\label{eqdefgenMLfam}
E_{(\nu),z}(-\la)= \int_0^{\infty} e^{-\la t}  \frac{\pa}{\pa t}\left(  \int_z^{\infty} G_{(\nu)}(t,dy)\right) \, dt
=1-\la \int_0^{\infty} e^{-\la t}  \left(  \int_0^z G_{(\nu)}(t,dy)\right) \, dt.
\end{equation}
They all are  completely monotone and the solution \eqref{eqlinder1unbgenrealsol0} to problem \eqref{eqlinder1unbgenrealRLshift}
is then expressed as
  \begin{equation}
\label{eqdefgenMLfamsol}
f(x)=Y  E_{(\nu),x-a}(-\la)+\int_0^{x-a} g(x-y) U_{\la}^{(\nu)}(dy),
\end{equation}
where the $\la$-potential measure is expressed in terms of $E_{(\nu),z}$ by the equation
  \begin{equation}
\label{eqdefgenMLfamsol1}
\int_0^z U_{\la}^{(\nu)}(dy) =(1- E_{(\nu),z}(-\la))/\la.
\end{equation}

If the measures $G_{(\nu)}(t,dy)$ have densities with respect to Lebesgue measure, $G_{(\nu)}(t,y)$, then
the $\la$-potential measure also has a density, $U_{\la}^{(\nu)}(y)$, and \eqref{eqdefgenMLfamsol1} rewrites as
  \begin{equation}
\label{eqdefgenMLfamsol2}
U_{\la}^{(\nu)}(y) =- \frac{1}{\la} \frac{\pa E_{(\nu),y}}{\pa y}.
\end{equation}
However, only for the case of the derivative $d^{\be}/dx^{\be}$, due to the particular scaling property of $G_{\be}$,
one has the additional relation $E_{(\nu),z}(-\la)=E_{(\nu)}(-\la z^{\be})$.

In order for $E_{(\nu)}(s)$ to be an entire analytic function, as for the case of usual Mittag-Leffler functions,
some regularity assumptions on $\nu$ are needed, as will be discussed in the next section.

Let us now turn to the extension of the linear equations to the Banach-space-valued setting, that is, to the equations
 \begin{equation}
\label{eqlinder1unbgen}
D^{(\nu)}_{a+*}\mu(x)=A \mu (x)+g(x), \quad \mu(a)=Y, \quad x\ge a.
\end{equation}
If $\mu(a)=Y=0$, this turns to the RL type equation
  \begin{equation}
\label{eqlinder1unbgenRL}
D^{(\nu)}_{a+}\mu(x)=A \mu(x)+g(x), \quad \mu(a)=0, \quad x\ge a.
\end{equation}

As above, we shall define the solution to \eqref{eqlinder1unbgen} as the function $\mu(x)=Y+u(x)$, where $u(x)$ solves the problem
 \begin{equation}
\label{eqlinder1unbgenRLsh}
D^{(\nu)}_{a+}u(x)=A u(x) +AY +g(x), \quad u(a)=0, \quad x\ge a.
\end{equation}

The only new point as compared with real-valued $A$ is the application of Proposition \ref{thLevyFellersubBval}
to build the semigroup $T'_te^{tA}$ and the necessity to work with Banach-space valued generalized functions
if interested in the appropriate interpretation of generalized solutions.
Notice also that the assumption of $e^{tA}$ to be a contraction naturally extends the case $A=-\la$ with $\la>0$,
as $e^{-\la t} \le 1$, and allows one to define the operator-valued generalized Mittag-Leffler functions by the operator-valued integral
  \begin{equation}
\label{eqdefgenMLfamoper}
E_{(\nu),z}(A)= \int_0^{\infty} e^{ tA}  \frac{\pa}{\pa t}\left(  \int_z^{\infty} G_{(\nu)}(t,dy)\right) \, dt
=1+A \int_0^{\infty} e^{ tA}  \left(  \int_0^z G_{(\nu)}(t,dy)\right) \, dt.
\end{equation}

\begin{theorem}
\label{thBvalgenfrac}
(i) Let the measure $\nu$  on $\{y:y>0\}$ satisfy \eqref{eqalevysubo} and let $A$ be the generator of the strongly continuous
semigroup $e^{tA}$ of contractions in the Banach space $B$, with the domain of the generator $D \subset B$.
Then the $\LC(B,B)$-valued potential measure,
 \begin{equation}
\label{eq1thBvalgenfrac}
U^{(\nu)}_{-A}(M)=\int_0^{\infty} e^{tA} G_{(\nu)}(t,M) \, dt,
\end{equation}
of the semigroup $T'_te^{tA}$ on the subspace $C^k_{kill(a)}([a,b],B)$ of $C_{uc}((-\infty,b],B)$
(constructed in Proposition \ref{thLevyFellersubBval}),
is well-defined as a $\si$-finite measure on $\{y:y\ge 0\}$ such that for any $z,\la>0$
\[
 U^{(\nu)}_{-A}([0,z]) \le e^{\la z}/\phi_{\nu}(\la).
\]
Therefore, the potential operator (given by convolution with  $U^{(\nu)}_{-A}$) of the semigroup $T'_te^{tA}$ on
 $C^k_{kill(a)}([a,b],B)$ is bounded for any $b>a$.

(ii) For any $g\in C_{kill(a)}([a,b],B)$, the $B$-valued function
 \begin{equation}
\label{eq2thBvalgenfrac}
f(x)=\int_0^{x-a} U^{(\nu)}_{-A}(dy) g(x-y)=\int_0^{x-a} \int_0^{\infty} e^{tA} G_{(\nu)}(t,dy) \, dt \, g(x-y)
\end{equation}
belongs to the domain of the generator of the semigroup $T'_te^{tA}$ and represents the unique solution to problem \eqref{eqlinder1unbgenRL}
from the domain. For any $g\in C([a,b],B)$, this function represents the unique generalized solution to \eqref{eqlinder1unbgenRL},
both by approximation and in the sense of generalized functions.

(iii) For any $g\in C([a,b],B)$ and $Y\in B$, the function
\begin{equation}
\label{eq3thBvalgenfrac}
f(x)=Y+\int_0^{x-a} U^{(\nu)}_{-A}(dy) (AY +g(x-y))
=E_{(\nu),x-a}(A)Y+\int_0^{x-a} U^{(\nu)}_{-A}(dy) g(x-y)
\end{equation}
represents the unique generalized solution to problem  \eqref{eqlinder1unbgen}.
\end{theorem}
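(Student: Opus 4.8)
The plan is to reduce the Banach-space-valued problem \eqref{eqlinder1unbgen} to the already-understood operator $L'_\nu + A$ of Proposition \ref{thLevyFellersubBval}, then invoke the resolvent/potential-operator machinery exactly as in the real-valued case (Propositions \ref{propLevyFellersubpotenfund1} and \ref{lindergenCaprealviashift}), checking at each stage that nothing breaks when $-\la$ is replaced by a generator $A$ of a contraction semigroup. The point is that the whole argument of Section \ref{secgenfraclin} was written so as to go through verbatim once one has (a) the semigroup $T'_te^{tA}$ with a common invariant core, and (b) its potential measure; both are now supplied by Proposition \ref{thLevyFellersubBval} and by part (i) of the present theorem.

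\medskip

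\noindent\emph{Step 1: part (i), the potential measure.} First I would observe that, by Proposition \ref{thLevyFellersubBval}(ii), $L'_\nu+A$ generates the strongly continuous semigroup $S_t:=T'_te^{tA}$ on the spaces listed there, and that by commutativity $S_t$ acts on $C_{kill(a)}^k([a,b],B)$ as $\int_0^\infty f(x-y)\,e^{tA}G_{(\nu)}(t,dy)$. Since $\|e^{tA}\|\le 1$, the $\LC(B,B)$-valued integral in \eqref{eq1thBvalgenfrac} is dominated in operator norm by the scalar potential measure $U^{(\nu)}(M)$, and the finiteness estimate $U^{(\nu)}_{-A}([0,z])\le e^{\la z}/\phi_\nu(\la)$ follows directly from \eqref{eqestimpotmes}. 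Hence $U^{(\nu)}_{-A}$ is a well-defined $\si$-finite $\LC(B,B)$-valued measure, and convolution with it is a bounded operator on $C_{kill(a)}^k([a,b],B)$ for each $b>a$, being dominated by the bounded scalar operator $R'_0$ of Proposition \ref{propLevyFellersubpotenfund1}(iii).

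\medskip

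\noindent\emph{Step 2: part (ii), the RL problem with vanishing boundary value.} For $g\in C_{kill(a)}([a,b],B)$, the function $f$ in \eqref{eq2thBvalgenfrac} is $R'_0 g$ for the semigroup $S_t$; since $0$ is in the resolvent set when $S_t$ is restricted to the invariant subspace $C_{kill(a)}([a,b],B)$ (the potential operator is bounded there, exactly as in the scalar case), the image of the potential operator coincides with the domain of $L'_\nu+A$ on that subspace, so $f$ lies in that domain and solves $(L'_\nu+A)f=g$, i.e. $D^{(\nu)}_{a+}f=Af+g$ with $f(a)=0$. Uniqueness and the two flavours of generalized solution (by approximation, and in $D'(\R,B)$) for merely continuous $g$ follow by the same approximation and Fourier-transform arguments as in Propositions \ref{propLevyFellersubpotenfund1}, \ref{linderfenRLgensol}, now applied in the Banach-valued setting; the only change is that one works with $B$-valued generalized functions, and the uniqueness input is that $e^{tA}$ being a contraction forces the relevant ``symbol'' $\psi_\nu(-p)+A$ to be boundedly invertible (its real part is $\le 0$ and it is shifted off $0$ by the strict inequality hidden in $\mathrm{Re}\,\psi_\nu(-p)<0$ away from $p=0$ together with $A$ generating a contraction — or, more simply, uniqueness is inherited from boundedness of the potential operator on the invariant subspace).

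\medskip

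\noindent\emph{Step 3: part (iii), restoring the boundary value.} Following the shift trick already used for \eqref{eqlinder1unbgenRLsh}, set $u=\mu-Y$; then $\mu$ solves \eqref{eqlinder1unbgen} (in the sense defined just before the theorem) iff $u$ solves \eqref{eqlinder1unbgenRLsh}, namely $D^{(\nu)}_{a+}u=Au+(AY+g)$ with $u(a)=0$. Applying part (ii) with $AY+g$ in place of $g$ gives $u(x)=\int_0^{x-a}U^{(\nu)}_{-A}(dy)(AY+g(x-y))$, whence $\mu=Y+u$ is the first displayed expression in \eqref{eq3thBvalgenfrac}. The second expression is obtained by isolating the $Y$-term: $Y+\int_0^{x-a}U^{(\nu)}_{-A}(dy)\,AY = Y + A\int_0^\infty e^{tA}\big(\int_0^{x-a}G_{(\nu)}(t,dy)\big)dt\,Y$, which is exactly $E_{(\nu),x-a}(A)Y$ by definition \eqref{eqdefgenMLfamoper}; here one uses that $A$ commutes with the scalar integral and that the interchange of $A$ with $\int_0^\infty$ is justified because $\int_0^{x-a}G_{(\nu)}(t,dy)$ decays (it is $1-\int_{x-a}^\infty G_{(\nu)}(t,dy)$ and $\int_{x-a}^\infty G_{(\nu)}(t,dy)\to1$) and $e^{tA}$ is a contraction, so the integral lands in the domain of $A$ by the standard closedness argument. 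Uniqueness of $\mu$ is equivalent to uniqueness of $u$, already established.

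\medskip

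\noindent\textbf{Main obstacle.} The delicate point is not any single computation but making the ``image of the potential operator $=$ domain of the generator'' argument rigorous on the \emph{invariant subspace} $C_{kill(a)}([a,b],B)$ rather than on the full space $C_\infty(\R,B)$ or $C_{uc}((-\infty,b],B)$, since on the latter the potential operator for $\la=0$ is unbounded (cf. Proposition \ref{propLevyFellersubpotenfund1}(iii) and the Remark following it); one must check that $C_{kill(a)}([a,b],B)$ is genuinely invariant under $S_t$ and that the restricted generator's resolvent at $0$ is the restricted potential operator. The secondary nuisance is the justification of pulling $A$ through the operator-valued integral in the last step, which needs the closedness of $A$ plus the decay of $\int_0^z G_{(\nu)}(t,dy)$ as $t\to0$ is bounded and the integrand $e^{tA}\int_0^z G_{(\nu)}(t,dy)$ has a norm-integrable bound — routine, but it is the one place where the contraction hypothesis on $e^{tA}$ is really used to keep \eqref{eqdefgenMLfamoper} convergent.
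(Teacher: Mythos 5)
Your proposal is correct and follows essentially the same route as the paper: part (i) by dominating the operator-valued potential measure by the scalar one using the contraction property, part (ii) via the ``image of the potential/resolvent operator equals the domain of the generator'' argument on the invariant subspace $C_{kill(a)}([a,b],B)$, and part (iii) by the shift $u=\mu-Y$ reducing to part (ii). The additional details you supply (the interchange of $A$ with the integral to identify $E_{(\nu),x-a}(A)$, and the care about where the potential operator is bounded) are consistent elaborations of the paper's terser argument.
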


\begin{proof}
(i)  For the measure  $U^{(\nu)}_A$ we obtain the same estimate as for $U^{(\nu)}$, see \eqref{eqestimpotmes},
because $e^{tA}$ are contractions.
(ii) What concerns the solutions in the domain, this is again the consequence of the fact that resolvent
 maps the whole space in the domain of the generator.
Existence and uniqueness of generalized solutions is a consequence of the explicit integral formula.
(iii) This follows from (ii) by the definition of the solution to \eqref{eqlinder1unbgen}.
\end{proof}

\section{Time-homogeneous case: $\nu$ comparable with the stable subordinators}
\label{secgenfracasym}

We have constructed the solutions to the linear problems \eqref{eqlinder1unbgenRL} and \eqref{eqlinder1unbgen} only for the
case of $A$ generating a contraction semigroup (with a direct extension to the case of a uniformly bounded semigroup $e^{tA}$).
This restriction was ultimately linked with formula \eqref{eqdefgenML} for the generalized
Mittag-Leffler function, from which it is not seen directly that it can be extended to negative $\la$.
Here we shall present some additional assumptions on $\nu$ that would ensure that this extension is possible
and thus the results above could be extended to $A$ generating arbitrary strongly continuous semigroups.
These assumptions are of two kinds, via lower bounds for $\nu(dy)$ and via its asymptotics at small $y$.

In the case of bounded operators $A$ in a Banach space $B$ the natural construction of the solutions to the linear problem
$D^{(\nu)}_{a+*} f(x)=Af(x)$ with a given initial condition $f(x)=Y$ is by rewriting it in the integral form
(by Proposition \ref{propLevyFellersubpotenfund2})and then representing it by the geometric series of the operators
$I^{(\nu)}_a$ having the potential measure as the kernel:
  \begin{equation}
\label{eqdefgenMitLeffu}
(1+A(I^{(\nu)}_a \1)(x) +\cdots +A^k [(I^{(\nu)}_a)^k \1](x) +\cdots )Y,
\end{equation}
whenever it converges.
Thus we are looking for the assumptions on $\nu$, which can ensure the convergence and provide reasonable
estimates for the sum.

The following assertion is the consequence of the comparison principle, see \eqref{eqcomppotmes},
and the well known expression for the potential measures of stable subordinators.
 \begin{prop}
 \label{generlinfraccomp1}
 Let $\nu (dy)$ be a measure on $\{y:y> 0\}$
 satisfying \eqref{eq0diftointfraceqgennu} and having the lower bound of the $\be$-fractional type
 \begin{equation}
\label{eq1generlinfraccomp1}
\nu(dy) \ge (-1/\Ga (-\be)) C_{\nu} y^{-1-\be} \, dy
\end{equation}
with some $\be \in (0,1)$ and $C_{\nu}>0$. Then
\begin{equation}
\label{eq2generlinfraccomp1}
 \int_0^x U^{(\nu)}(dy) \le C_{\nu}(I_0^{\be}\1)(x)=C_{\nu}x^{\be}/\Ga(\be)
\end{equation}
for any $x>0$.
 \end{prop}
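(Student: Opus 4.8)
The plan is to reduce the estimate to the comparison principle \eqref{eqcomppotmes} together with the explicitly known potential measure of a stable subordinator. Set $\nu_{\be}(dy)=(-1/\Ga(-\be))\,C_{\nu}\,y^{-1-\be}\,dy$, which by \eqref{eq1generlinfraccomp1} satisfies $\nu(dy)\ge\nu_{\be}(dy)$ and which is $C_{\nu}$ times the L\'evy measure of the $\be$-stable subordinator generating $d^{\be}/dx^{\be}$. The first step is to compute $U^{(\nu_{\be})}$. Its Laplace exponent is $\phi_{\nu_{\be}}(\la)=C_{\nu}\la^{\be}$, because $\int_0^{\infty}(1-e^{-\la y})\,y^{-1-\be}\,dy=-\Ga(-\be)\,\la^{\be}$ (integration by parts turns this into the standard Gamma integral, using $\Ga(1-\be)=-\be\,\Ga(-\be)$). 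Since for any subordinator $\int_0^{\infty}e^{-\la y}\,U^{(\mu)}(dy)=\int_0^{\infty}e^{-t\phi_{\mu}(\la)}\,dt=\phi_{\mu}(\la)^{-1}$, the potential measure $U^{(\nu_{\be})}$ has Laplace transform $(C_{\nu}\la^{\be})^{-1}$, hence a density $(C_{\nu}\Ga(\be))^{-1}y^{\be-1}$; equivalently $U^{(\nu_{\be})}$ is the integral kernel of $C_{\nu}^{-1}I_0^{\be}$, so
\[
\int_0^x U^{(\nu_{\be})}(dy)=\frac{1}{C_{\nu}}\,(I_0^{\be}\1)(x)=\frac{1}{C_{\nu}}\,\frac{x^{\be}}{\Ga(\be+1)}.
\]

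The second step is the comparison. Both $U^{(\nu)}$ and $U^{(\nu_{\be})}$ are concentrated on $[0,\infty)$ and are finite on bounded intervals by \eqref{eqestimpotmes}, and the function $\1_{(-\infty,x]}$ is nonincreasing on $\R$, so $-\1_{(-\infty,x]}$ is nondecreasing; applying \eqref{eqcomppotmes} to it with $\nu_1=\nu$, $\nu_2=\nu_{\be}$ gives
\[
\int_0^x U^{(\nu)}(dy)=\int \1_{(-\infty,x]}(y)\,U^{(\nu)}(dy)\le\int \1_{(-\infty,x]}(y)\,U^{(\nu_{\be})}(dy)=\int_0^x U^{(\nu_{\be})}(dy),
\]
which combined with the first step yields the asserted $\be$-fractional-type bound, with multiplicative constant $C_{\nu}^{-1}$. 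If one wants to stay within the literal hypotheses of \eqref{eqcomppotmes}, one applies it instead to continuous nondecreasing functions $g_{\ep}$ equal to $-1$ on $(-\infty,x]$ and rising linearly to $0$ on $[x,x+\ep]$; since $U^{(\nu_{\be})}$ has a density and hence no atom at $x$, letting $\ep\to0$ recovers the displayed inequality.

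The argument is short, and the only genuinely delicate points are bookkeeping ones: fixing the normalization of the stable Laplace exponent so that the constant lands on the correct side — the potential scales \emph{inversely} with the L\'evy measure, so a larger $C_{\nu}$ forces a \emph{smaller} bound, giving the factor $C_{\nu}^{-1}$ (with $C_{\nu}=1$, $\nu_{\be}$ is exactly the stable measure and $U^{(\nu_{\be})}$ is the kernel of $I_0^{\be}$) — and the harmless passage from the indicator $\1_{(-\infty,x]}$ to its continuous monotone approximants needed to invoke \eqref{eqcomppotmes} verbatim. No analytic difficulty beyond these is anticipated, since finiteness of all the potential masses on compacts is already guaranteed by \eqref{eqestimpotmes}.
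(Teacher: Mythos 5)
Your argument follows exactly the route the paper intends for this proposition --- the comparison principle \eqref{eqcomppotmes} against the $\be$-stable reference measure combined with the explicit stable potential --- and the computations are correct. The substantive point is that your bookkeeping of the constant is right and the printed statement's is not: since $\phi_{\nu_{\be}}(\la)=C_{\nu}\la^{\be}$ and the Laplace transform of a subordinator's potential measure is $1/\phi$, the reference potential is $C_{\nu}^{-1}$ times the kernel of $I_0^{\be}$, so the comparison gives $\int_0^x U^{(\nu)}(dy)\le C_{\nu}^{-1}x^{\be}/\Ga(1+\be)$ rather than $C_{\nu}x^{\be}/\Ga(\be)$. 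A sanity check confirms your version: as $C_{\nu}\to 0$ hypothesis \eqref{eq1generlinfraccomp1} becomes vacuous, while the bound as printed would force $U^{(\nu)}([0,x])\to 0$ for an arbitrary $\nu$, which is absurd; note also that $(I_0^{\be}\1)(x)=x^{\be}/\Ga(1+\be)$, not $x^{\be}/\Ga(\be)$, so the displayed equality in \eqref{eq2generlinfraccomp1} is itself off. The corrected constant should then be carried into \eqref{eq3generlinfraccomp1} and Theorem \ref{genpropMFreg}, where only the arguments of the Mittag-Leffler bounds change. Your two ancillary precautions are also exactly right: \eqref{eqcomppotmes} has content only for signed nondecreasing $f$ (for nonnegative nondecreasing $f$ both sides are infinite), so applying it to $-\1_{(-\infty,x]}$, with a continuous monotone approximation if one insists on continuity, is the correct way to extract $U^{(\nu)}([0,x])\le U^{(\nu_{\be})}([0,x])$.
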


By \eqref{eq2generlinfraccomp1},
\[
\| 1+\la(I^{(\nu)}_a \1)(x) +\cdots +\la^k [(I^{(\nu)}_a)^k \1](x) +\cdots \|
\]
\begin{equation}
\label{eq3generlinfraccomp1}
\le \| 1+C_{\nu}|\la|(I^{\be}_a \1)(x) +\cdots +(C_{\nu}|\la|)^k [I^{k\be}_a \1](x) +\cdots \|
\le E_{\be}(C_{\nu}|\la|(x-a)^{\be}).
\end{equation}

 \begin{theorem}
 \label{genpropMFreg}
 Under the assumptions of Proposition \ref{generlinfraccomp1}
 the integral \eqref{eqdefgenMLfam} converges for all complex $\la$, so that the function $E_{(\nu),z}(\la)$
  (defined initially  by \eqref{eqdefgenMLfam} for the negative values of parameter)
 represents an entire analytic function of $\la$. Its series expansions is
 \begin{equation}
\label{eq1genpropMFreg}
E_{(\nu),z}(\la)= 1+\la (I^{(\nu)}_0 \1)(z) +\cdots +\la^k [(I^{(\nu)}_0)^k \1](z) +\cdots,
 \end{equation}
 or
  \[
E_{(\nu),z}(\la)= 1+\la (I^{(\nu)}_a \1)(x) +\cdots +\la^k [(I^{(\nu)}_a)^k \1](x) +\cdots,
 \]
 with $x-a=z$. It can be also obtained by expanding the last expression of
 \eqref{eqdefgenMLfam} in power series over $\la$. Series \eqref{eq1genpropMFreg}
 is bounded by  \eqref{eq3generlinfraccomp1}.

 Moreover, the integral expressing the $\la$-potential measure
 \[
 U^{(\nu)}_{\la}([0,z])=\int_0^{\infty} e^{-\la t} G_{(\nu)}(t, [0,z])  \, dt
 \]
  converges for all complex $\la$, so that
  the $\la$-potential measure is also an
 entire analytic function of $\la$ and its series expansions is obtain from that of $E_{(\nu),z}(-\la)$
 via formula \eqref{eqdefgenMLfamsol1}. Finally,
 \begin{equation}
\label{eq2genpropMFreg}
\|U^{(\nu)}_{\la}([0,z])\|\le C_{\nu} \be z^{\be-1} E'_{\be} (|\la| z^{\be}).
 \end{equation}
 \end{theorem}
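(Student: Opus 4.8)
The plan is to anchor everything on the comparison estimate of Proposition~\ref{generlinfraccomp1}, together with one elementary large-time bound on the kernels $G_{(\nu)}(t,\cdot)$; write $z=x-a$ throughout. The first step is to upgrade \eqref{eq2generlinfraccomp1} to the iterated bound
\[
[(I^{(\nu)}_a)^k\1](x)\le C_\nu^k\,(I^{k\be}_a\1)(x)=C_\nu^k\,\frac{(x-a)^{k\be}}{\Ga(k\be+1)},\qquad k=0,1,2,\dots,
\]
which is \eqref{eq3generlinfraccomp1}. This goes by induction: $I^{(\nu)}_a$ is convolution with the nonnegative measure $U^{(\nu)}$ (which has no atom at $0$ since $\nu$ is infinite under \eqref{eq1generlinfraccomp1}), it maps nonnegative nondecreasing functions to such functions, and on such an $f$ one has $(I^{(\nu)}_af)(x)\le C_\nu(I^\be_af)(x)$ because $y\mapsto f(x-y)$ is nonincreasing on $[0,x-a]$ while the distribution functions of $U^{(\nu)}$ and of $C_\nu$ times the $\be$-stable potential measure are ordered by \eqref{eq2generlinfraccomp1} (Abel summation); feeding this into the semigroup law $I^\be_aI^{k\be}_a=I^{(k+1)\be}_a$ and the positivity of $I^\be_a$ propagates the estimate. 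Hence the series in \eqref{eq1genpropMFreg} is dominated term by term by $\sum_k(C_\nu|\la|)^k(x-a)^{k\be}/\Ga(k\be+1)=E_\be(C_\nu|\la|(x-a)^\be)$; since the classical Mittag-Leffler function $E_\be$ is entire, the series converges absolutely, locally uniformly in $\la\in\C$ and in $x$ on compact sets, so it defines an entire function of $\la$ obeying \eqref{eq3generlinfraccomp1}. Exactly the same applies to $\sum_k(-\la)^k[(I^{(\nu)}_a)^{k+1}\1](x)$, and majorizing this series and its $z$-derivative via Proposition~\ref{generlinfraccomp1} recognizes the sum as a multiple of the derivative $E'_\be$ of the classical Mittag-Leffler function, which gives the bound \eqref{eq2genpropMFreg}.

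Next I would check that the integral representations converge for every complex $\la$, not only on the right half-plane — this is the one point requiring care. From \eqref{eq1generlinfraccomp1} the Laplace exponent \eqref{eqsymbnu} satisfies $\phi_\nu(s)\ge C_\nu s^\be$ for all $s>0$ (the $\be$-stable subordinator has Laplace exponent $s^\be$); since $\int_0^\infty e^{-sy}G_{(\nu)}(t,dy)=e^{-t\phi_\nu(s)}$, Chernoff's bound gives, for each $s>0$,
\[
G_{(\nu)}(t,[0,z])\le e^{sz}\int_0^\infty e^{-sy}G_{(\nu)}(t,dy)=e^{sz-t\phi_\nu(s)}\le e^{sz-C_\nu t s^\be},
\]
and minimizing the exponent over $s$ yields $G_{(\nu)}(t,[0,z])\le\exp(-c_2\,z^{-\be/(1-\be)}\,t^{1/(1-\be)})$ with $c_2=c_2(C_\nu,\be)>0$. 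As $1/(1-\be)>1$ this is super-exponential decay in $t$, whence $\int_0^\infty e^{Kt}G_{(\nu)}(t,[0,z])\,dt<\infty$ for every $K$. Consequently both forms of \eqref{eqdefgenMLfam} and the integral $\int_0^\infty e^{-\la t}G_{(\nu)}(t,[0,z])\,dt$ for the $\la$-potential measure converge absolutely for all $\la\in\C$ (the two forms of \eqref{eqdefgenMLfam} agreeing by an integration by parts now legitimate, since $e^{-\la t}G_{(\nu)}(t,[0,z])\to0$), and by Morera's theorem, or differentiation under the integral, they are entire functions of $\la$.

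It remains to identify these entire functions with the power series of the first step. For small real $\la>0$, $E_{(\nu),z}(-\la)=1-\la\,U^{(\nu)}_\la([0,z])$ where $U^{(\nu)}_\la$ is the kernel of the resolvent $R'_\la=(\la-L'_\nu)^{-1}$; since $I^{(\nu)}_a=R'_0$ is bounded on $C_{kill(a)}([a,b])$ by Proposition~\ref{propLevyFellersubpotenfund1}, the Neumann series $R'_\la=\sum_{k\ge0}(-\la)^k(R'_0)^{k+1}$ converges in operator norm, giving $U^{(\nu)}_\la([0,z])=\sum_{k\ge0}(-\la)^k[(I^{(\nu)}_a)^{k+1}\1](x)$ and hence $E_{(\nu),z}(-\la)=\sum_{j\ge0}(-\la)^j[(I^{(\nu)}_a)^j\1](x)$ — the algebraic content of \eqref{eqdefgenMitLeffu}. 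Both sides being analytic on $\{Re\,\la>0\}$ and the right one entire, the identity theorem extends the equality there, so the power series is the analytic continuation of $E_{(\nu),z}$ to all of $\C$, and of $U^{(\nu)}_\la([0,z])$ through \eqref{eqdefgenMLfamsol1} (equivalently, expanding $e^{-\la t}$ under the integral in \eqref{eqdefgenMLfam} and using the moment identity $[(I^{(\nu)}_a)^{k+1}\1](x)=\tfrac1{k!}\int_0^\infty t^kG_{(\nu)}(t,[0,z])\,dt$, legitimate by the bound just proved, gives the same series). The routine part of the proof is everything built directly on Proposition~\ref{generlinfraccomp1}; the real obstacle is the second step — honest convergence of the integral representations off the right half-plane — whose essential input is the sub-linear lower bound $\phi_\nu(s)\ge C_\nu s^\be$ and the super-exponential tail estimate it yields.
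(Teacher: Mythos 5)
Your proof is correct, and on the two points where the theorem actually needs an argument it takes a different route from the paper. The paper's proof is a few lines: expand the last expression of \eqref{eqdefgenMLfam} in powers of $\la$, observe by the comparison principle that every coefficient is dominated by the corresponding coefficient for the $\be$-stable kernel $G_{\be}$ (so convergence for all $\la$ is inherited from the known entirety of $E_{\be}$), identify the resulting series with \eqref{eq1genpropMFreg} by noting that both the integral and the series solve the same linear fractional equation, and get \eqref{eq2genpropMFreg} from $\int_0^{\infty} e^{|\la|t}G_{(\nu)}(t,[0,z])\,dt\le C_{\nu}\int_0^{\infty} e^{|\la|t}G_{\be}(t,[0,z])\,dt$. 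You instead (a) prove the term-by-term domination by an explicit induction on the iterated potential operators, which is exactly the content of \eqref{eq3generlinfraccomp1} that the paper takes as already established; (b) replace the appeal to the stable kernel by a self-contained Chernoff bound $G_{(\nu)}(t,[0,z])\le\exp\{-c_2 z^{-\be/(1-\be)}t^{1/(1-\be)}\}$ derived from $\phi_{\nu}(s)\ge C_{\nu}s^{\be}$; and (c) identify series and integral via the Neumann expansion of the resolvent on $\{\mathrm{Re}\,\la>0\}$ plus the identity theorem, rather than via uniqueness for the fractional ODE. What your version buys is that the interchange of sum and integral off the right half-plane — which the paper leaves implicit and which is the only genuinely delicate point — is justified by an explicit absolute-convergence estimate; the cost is an extra optimization computation that the paper avoids by delegating the tail behaviour to the known stable case. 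One small remark: your final majorization yields a bound of the form $C_{\nu}z^{\be}E'_{\be}(C_{\nu}|\la|z^{\be})$, which matches \eqref{eq2genpropMFreg} only up to the placement of the constants $C_{\nu}$ and the power of $z$; this is harmless (the paper's own one-line derivation of \eqref{eq2genpropMFreg} is no more precise), but if you want the literal form stated you should track the constant through the comparison with the $\la$-potential of the $C_{\nu}$-scaled stable subordinator rather than through the series coefficients.
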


\begin{proof}
Expanding  the last expression of \eqref{eqdefgenMLfam} in the power series over $\la$,
we see, by the comparison principle, that
 all terms are bounded by the corresponding terms of the series with $G_{\be}(t,dy)$ instead of $G_{(\nu)}(t,dy)$.
 Hence this series is convergent for all $\la$. Since both  the last expression in
 \eqref{eqdefgenMLfam} and series \eqref{eq1genpropMFreg} solve the same linear fractional equation, they coincide.

Again by the comparison principle,
\[
\|U^{(\nu)}_{\la}([0,z])\|\le \int_0^{\infty} e^{|\la| t} G_{(\nu)}(t, [0,z]) \, dt
\le  C_{\nu} \int_0^{\infty} e^{|\la| t} G_{\be}(t, [0,z]) \, dt,
\]
implying \eqref{eq2genpropMFreg}.
\end{proof}

We are ready for the main result of this section that extends Theorem \ref{thBvalgenfrac} to arbitrary
 semigroups $e^{tA}$.  The proof is fully the same as that of Theorem \ref{thBvalgenfrac}
(once the properties of the $\la$-potential measures from Theorem \ref{genpropMFreg} are obtained)
 and is thus omitted.

 \begin{theorem}
 \label{thgenlinfraceqreg}
 Under the assumptions of Proposition \ref{generlinfraccomp1}
let $A$ be the generator of the strongly continuous
semigroup $e^{tA}$ in the Banach space $B$, with the domain of the generator $D \subset B$.
Let the growth type of $e^{tA}$ is $m_0$, so that $\|e^{tA}\|\le Me^{mt}$ with any $m>m_0$ and some $M$.
Then the following holds.

(i) The $\LC(B,B)$-valued potential measure,
 \begin{equation}
\label{eq1thgenlinfraceqreg}
U^{(\nu)}_{-A}(M)=\int_0^{\infty} e^{tA} G_{(\nu)}(t,M) \, dt,
\end{equation}
of the semigroup $T'_te^{tA}$ on the subspace $C^k_{kill(a)}([a,b],B)$ of $C_{uc}((-\infty,b],B)$
(constructed in Theorem \ref{thLevyFellersubBval}), is well-defined as a $\si$-finite measure on $\{y:y\ge 0\}$ and
 \begin{equation}
\label{eq2thgenlinfraceqreg}
 U^{(\nu)}_{-A}([0,z]) \le  C_{\nu} M \be z^{\be-1} E'_{\be} (m z^{\be})
\end{equation}
for any $z>0$.

(ii) The $\LC(B,B)$-valued generalized families of Mittag-Leffler functions
  \begin{equation}
\label{eq3thgenlinfraceqreg}
E_{(\nu),z}(A)= \int_0^{\infty} e^{A t}  \frac{\pa}{\pa t}\left(  \int_z^{\infty} G_{(\nu)}(t,dy)\right) \, dt
=1+A \int_0^{\infty} e^{A t}  \left(  \int_0^z G_{(\nu)}(t,dy)\right) \, dt
\end{equation}
are well-defined and are bounded:
  \begin{equation}
\label{eq4thgenlinfraceqreg}
\|E_{(\nu),z}(A)\|\le  M E_{\be}(C_{\nu} m (x-a)^{\be}).
\end{equation}

(iii) For any $g\in C_{kill(a)}([a,b],B)$, the $B$-valued function \eqref{eq2thBvalgenfrac}
belongs to the domain of the generator of the semigroup $T'_te^{tA}$ and represents the unique solution to problem \eqref{eqlinder1unbgenRL}
from the domain. For any $g\in C([a,b],B)$, this function represents the unique generalized solution to \eqref{eqlinder1unbgenRL},
both by approximation and in the sense of generalized functions.

(iv) For any $g\in C([a,b],B)$ and $Y\in B$, the function \eqref{eq3thBvalgenfrac}
represents the unique generalized solution to problem  \eqref{eqlinder1unbgen}.
 \end{theorem}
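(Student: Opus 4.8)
The plan is to re-run the proof of Theorem \ref{thBvalgenfrac} essentially verbatim, making the single change that the contractivity estimate $\|e^{tA}\|\le 1$ is everywhere replaced by the growth bound $\|e^{tA}\|\le Me^{mt}$, and the resulting exponential factor $e^{mt}$ is absorbed by the super-exponential decay of $G_{(\nu)}(t,[0,z])$ in $t$ that comparison with the $\be$-stable subordinator provides. All the genuinely analytic input is already packaged in Theorem \ref{genpropMFreg}: the $\la$-potential measure $U^{(\nu)}_\la$ and the scalar generalized Mittag-Leffler functions $E_{(\nu),z}(\la)$ are entire in $\la$ with the explicit bounds \eqref{eq2genpropMFreg} and \eqref{eq3generlinfraccomp1}; what remains is to lift these to the $\LC(B,B)$-valued setting and to quote the semigroup facts of Proposition \ref{thLevyFellersubBval}, which hold for an arbitrary strongly continuous $e^{tA}$ (not only a contraction): namely that $e^{tA}$ acts pointwise on $C_\infty(\R,B)$ and $C_{uc}((-\infty,b],B)$ as a strongly continuous semigroup of growth bound $m$, commutes with $T'_t$, and that $L'_\nu+A$ generates the strongly continuous semigroup $T'_te^{tA}$ with common core $C^1_{kill(a)}([a,b],D)$.

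For part (i) I would argue as follows. Since each $T'_t$ is convolution with a (sub)probability measure, it is a contraction on every space $C^k_{kill(a)}([a,b],B)\subset C_{uc}((-\infty,b],B)$; hence $T'_te^{tA}$ has norm at most $Me^{mt}$ there, so $\|e^{tA}G_{(\nu)}(t,M)\|\le Me^{mt}G_{(\nu)}(t,M)$, and by the estimate \eqref{eq2genpropMFreg} of Theorem \ref{genpropMFreg} applied with the real parameter $m$ in place of $\la$,
\[
\|U^{(\nu)}_{-A}([0,z])\|\le M\int_0^\infty e^{mt}G_{(\nu)}(t,[0,z])\,dt\le C_\nu M\be z^{\be-1}E'_\be(mz^\be),
\]
which is \eqref{eq2thgenlinfraceqreg}; in particular $U^{(\nu)}_{-A}$ is a well-defined $\si$-finite $\LC(B,B)$-valued measure on $\{y\ge0\}$, locally bounded, and the potential operator $g\mapsto\int_0^{x-a}U^{(\nu)}_{-A}(dy)\,g(x-y)$ is bounded on $C^k_{kill(a)}([a,b],B)$.

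For (ii) I would write the positive measure $m_z(dt):=\frac{\pa}{\pa t}\bigl(\int_z^\infty G_{(\nu)}(t,dy)\bigr)\,dt$, which has total mass $1$ (as in \eqref{eqdefgenMLest}), so that $E_{(\nu),z}(A)=\int_0^\infty e^{tA}\,m_z(dt)$ and hence $\|E_{(\nu),z}(A)\|\le M\int_0^\infty e^{mt}\,m_z(dt)=M\,E_{(\nu),z}(m)\le M\,E_\be(C_\nu m(x-a)^\be)$ by \eqref{eq3generlinfraccomp1} with $z=x-a$; this is \eqref{eq4thgenlinfraceqreg}, and the convergence of the defining operator integral is part of the same estimate. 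For (iii): the operator $L'_\nu+A$ generates the strongly continuous semigroup $T'_te^{tA}$ (Proposition \ref{thLevyFellersubBval}(ii)), whose potential operator on $C^k_{kill(a)}([a,b],B)$ is the bounded operator of part (i); since the range of a bounded potential operator coincides with the domain of the generator, the function \eqref{eq2thBvalgenfrac} lies in that domain, satisfies $D^{(\nu)}_{a+}f=Af+g$ with $f(a)=0$ (using $D^{(\nu)}_+=D^{(\nu)}_{a+}$ on $C_{kill(a)}$), and is the unique such solution; for merely continuous $g$ the two notions of generalized solution follow from the explicit formula and the density of $C_{kill(a)}([a,b],B)$, exactly as in Theorem \ref{thBvalgenfrac}(ii) and Proposition \ref{propLevyFellersubpotenfund1}. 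Finally (iv) follows by setting $\mu=Y+u$ with $u$ solving \eqref{eqlinder1unbgenRLsh}, i.e. applying (iii) with $AY+g$ in place of $g$, and using the identity $Y+A\int_0^{x-a}U^{(\nu)}_{-A}(dy)\,Y=E_{(\nu),x-a}(A)Y$, where $A\int_0^{x-a}U^{(\nu)}_{-A}(dy)\,Y=\int_0^{x-a}\int_0^\infty Ae^{tA}G_{(\nu)}(t,dy)\,dt\,Y$ makes sense for every $Y\in B$ by the smoothing of $e^{tA}$ for $t>0$; this yields \eqref{eq3thBvalgenfrac}.

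The main obstacle is exactly the one already dispatched in Theorem \ref{genpropMFreg}: when $e^{tA}$ is not uniformly bounded the integrand $e^{tA}G_{(\nu)}(t,\cdot)$ grows exponentially in $t$, and its integrability is invisible from the abstract construction — it rests on $G_{(\nu)}(t,[0,z])$ decaying faster than any exponential for fixed $z$, which is guaranteed only under the $\be$-stable lower bound \eqref{eq1generlinfraccomp1} (equivalently, on the entirety of $E_\be$). Once that estimate is granted, the passage to operator-valued kernels and the identification of the solution with a domain element are routine and literally the same as in the contraction case; the only extra line of care is checking that $T'_te^{tA}$ remains strongly continuous with $C^1_{kill(a)}([a,b],D)$ as a core when $e^{tA}$ merely has growth bound $m$, which is covered by Proposition \ref{thLevyFellersubBval}.
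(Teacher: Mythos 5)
Your proposal is correct and follows essentially the same route as the paper, which simply declares the proof to be identical to that of Theorem \ref{thBvalgenfrac} once the $\la$-potential estimates of Theorem \ref{genpropMFreg} are available — exactly the substitution of $\|e^{tA}\|\le Me^{mt}$ for contractivity, absorbed by the comparison bounds \eqref{eq2genpropMFreg} and \eqref{eq3generlinfraccomp1}, that you carry out. One minor remark: your appeal to ``smoothing of $e^{tA}$ for $t>0$'' in part (iv) is not available for a general strongly continuous semigroup, but it is also unnecessary, since the first integral in \eqref{eq3thgenlinfraceqreg} already defines $E_{(\nu),z}(A)Y$ for every $Y\in B$ and the second expression is obtained from it by integration by parts.
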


\section{Time-nonhomogeneous case: bounded $\nu$}
\label{secgenfractimedep}

Our aim now is to extend the results obtained above for \eqref{eqlinder1unbgen} to the case of the family of operators $A$ depending on $x$, that is,
to the problem
 \begin{equation}
\label{eqlinder1unbgentimedep}
D^{(\nu)}_{a+*}\mu(x)=A(x) \mu (x)+g(x), \quad \mu(a)=Y, \quad x\ge a.
\end{equation}

This development is based on an appropriate extension of Theorem \ref{thLevyFellersubBval}, which we shall carry out in two steps,
first for bounded and then for unbounded measures $\nu$. In any case, the method of three spaces turns out to be convenient.

\begin{theorem}
\label{thLevyFellersubBvaltimedep}
(i) Let $\tilde D\subset D\subset B$ be three Banach spaces with the ordered norms: $\|.\|_{\tilde D} \ge \|.\|_D \ge \|.\|_B$
and $\tilde D$ is dense in both $D$ and $B$ with respect to their topologies (three Banach spaces setting). Let
 $A(x)$, $x\in \R$, be a uniformly bounded family of operators in $\LC(D,B)$ depending strongly continuous on $x$,
 which is also uniformly bounded and strongly continuous in $\LC(\tilde D,D)$. Let all $A(x)$  generate uniformly bounded
(for $x \in \R$ and $t$ from any compact segment) strongly continuous semigroups $e^{tA(x)}$ in $B$ with the common core $D$,
which represent also uniformly bounded strongly continuous semigroups in $D$ with the common core $\tilde D$,
and uniformly bounded semigroups in $\tilde D$.
Then the operators
\[
e^{tA(.)}: f(x) \mapsto e^{tA(x)}f(x)
\]
form a strongly continuous semigroup in $C_{\infty}(\R,B)$ with the invariant core $C_{\infty}(\R,D)$,
and a strongly continuous semigroup in $C_{\infty}(\R,D)$.

(ii) Assume additionally that the function $x\mapsto A(x)$ is differentiable both as the mapping $\R \to \LC(D,B)$
and as a mapping $\R \to \LC(\tilde D,D)$ and the derivatives $A'(x)$ (here by prime we denote the derivative with respect to $x$)
represent uniformly (in $x$) bounded and strongly continuous families
of operators again both in  $\LC(D,B)$ and $\LC(\tilde D,D)$. Then the operators $e^{tA(.)}$ represent a strongly continuous semigroup
in the Banach space $C^1_{\infty}(\R,B)\cap C_{\infty}(\R,D)$ (with the norm defined as the sum of the norms in
$C^1_{\infty}(\R,B)$ and  $C_{\infty}(\R,D)$) with the invariant core $C^1_{\infty}(\R,D)\cap C_{\infty}(\R,\tilde D)$.
Reduced to the latter space, the operators $e^{tA(.)}$ form a semigroup of bounded operators, if this space is equipped
with its own Banach topology.

(iii) Under the assumptions (i) and (ii) let $e^{tA(x)}$ have common types of growth, $m_0^B$ and $m_0^D$, $\tilde m_0^D$, as the semigroups in
$B$, $D$ and $\tilde D$ respectively, so that
 \begin{equation}
\label{eq0thLevyFellersubBvaltimedep}
\|e^{tA(x)}\|_{B\to B} \le M_B e^{tm_B},
\quad
\|e^{tA(x)}\|_{D\to D} \le M_D e^{tm_D},
\quad
\|e^{tA(x)}\|_{\tilde D\to \tilde D} \le \tilde M_D e^{t \tilde m_D},
\end{equation}
 for any $m_B>m_0^B$,  $m_D>m_0^D$, $\tilde m_D > \tilde m_0^D$ and some $M_B, M_D, \tilde M_D$. Then
 the semigroup $e^{tA(.)}$ in $C^1_{\infty}(\R,B)\cap C_{\infty}(\R,D)$ has the type of growth not exceeding $\max(m_0^B,m_0^D)$
 and enjoy the estimates
\[
\|e^{tA(.)}\|_{\LC(C^1_{\infty}(\R,B)\cap C_{\infty}(\R,D))}
\le \max(M_B e^{tm_B}, M_D e^{tm_D}+M_B M_D t e^{t \max(m_B,m_D)} \sup_x \|A'(x)\|_{D\to B})
\]
\begin{equation}
\label{eq00thLevyFellersubBvaltimedep}
\le \max(M_D,M_B) \exp\{t(\max(m_B,m_D)+M_B \sup_x \|A'(x)\|_{D\to B})\};
\end{equation}
in $C^1_{\infty}(\R,B)\cap C_{\infty}(\R,D)$ this semigroup has the type of growth not exceeding $\max(m_0^D, \tilde m_0^D)$
and enjoy the estimates
\[
\|e^{tA(x)}\|_{\LC(C^1_{\infty}(\R,D)\cap C_{\infty}(\R,\tilde D))}
\le \max(M_D e^{tm_D}, \tilde M_D e^{t \tilde m_D}+M_D \tilde M_D  t e^{t \max(m_D, \tilde m_D)} \sup_x \|A'(x)\|_{\tilde D\to D})
\]
\begin{equation}
\label{eq000thLevyFellersubBvaltimedep}
\le \max(M_D, \tilde M_D) \exp\{t(\max(m_D, \tilde m_D)+\tilde M_D \sup_x \|A'(x)\|_{\tilde D\to D})\}.
\end{equation}
\end{theorem}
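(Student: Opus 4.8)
The plan is to follow the pattern of Proposition~\ref{thLevyFellersubBval}, upgrading it to the $x$-dependent setting and the three–Banach-space hierarchy, and then to read off the growth estimates from an explicit Duhamel formula for $\tfrac{d}{dx}e^{tA(x)}$. For part~(i) the semigroup identity is inherited pointwise from the $e^{tA(x)}$, and the uniform boundedness of $\{e^{tA(x)}\}$ in $B$ (resp.\ in $D$) gives boundedness of $e^{tA(.)}$ in $C_\infty(\R,B)$ (resp.\ $C_\infty(\R,D)$). For strong continuity I would fix $f\in C_\infty(\R,B)$, observe that $e^{tA(x)}f(x)\to f(x)$ pointwise as $t\to0$, and upgrade this to uniform-in-$x$ convergence via the equicontinuity at $t=0$ of the curves $t\mapsto e^{tA(x)}f(x)$ — first for $f(x)$ in the common core $D$, using $e^{tA(x)}v-v=\int_0^t A(x)e^{sA(x)}v\,ds$ together with the uniform bounds on $\|e^{sA(x)}\|_{B\to B}$ and $\|A(x)\|_{D\to B}$, then for general $f$ by density of $C_\infty(\R,D)$ and the uniform bound on $\|e^{tA(.)}\|$. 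Invariance and strong continuity on $C_\infty(\R,D)$ follow by repeating the argument one space up, and $C_\infty(\R,D)$ is a core by the standard criterion that a dense invariant subspace contained in the domain of the generator is a core.

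The heart of the matter is part~(ii), where I would first prove that, for $v\in D$,
\[
\frac{d}{dx}\,e^{tA(x)}v=\int_0^t e^{(t-s)A(x)}A'(x)e^{sA(x)}v\,ds .
\]
This follows from the Duhamel identity $e^{tA(x+h)}v-e^{tA(x)}v=\int_0^t e^{(t-s)A(x)}\bigl(A(x+h)-A(x)\bigr)e^{sA(x+h)}v\,ds$ — legitimate since $e^{sA(x+h)}v\in D$, $A(x+h)-A(x)\in\LC(D,B)$ and $e^{(t-s)A(x)}\in\LC(B,B)$ — by dividing by $h$ and passing to the limit under the integral using $h^{-1}(A(x+h)-A(x))\to A'(x)$ in $\LC(D,B)$ and the uniform bounds on $e^{sA(x)}$ in $D$ and $B$. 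By the product rule, for $f\in C^1_\infty(\R,B)\cap C_\infty(\R,D)$,
\[
\frac{d}{dx}\bigl(e^{tA(x)}f(x)\bigr)=\int_0^t e^{(t-s)A(x)}A'(x)e^{sA(x)}f(x)\,ds+e^{tA(x)}f'(x),
\]
and the right-hand side is continuous in $x$ with values in $B$ (strong continuity of $e^{(t-s)A(\cdot)}$ in $B$, of $A'(\cdot)$ in $\LC(D,B)$ and of $e^{sA(\cdot)}$ in $D$, plus dominated convergence in $s$), while $e^{tA(\cdot)}f(\cdot)\in C_\infty(\R,D)$ by part~(i); hence $C^1_\infty(\R,B)\cap C_\infty(\R,D)$ is invariant, and strong continuity there drops out of the same formula as $t\to0$. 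Running the identical computation with $(\tilde D,D)$ replacing $(D,B)$ gives invariance of $C^1_\infty(\R,D)\cap C_\infty(\R,\tilde D)$; being dense (a mollification in $x$ together with the density of $\tilde D$ in $D$ and of $D$ in $B$) and contained in the domain, it is a core.

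Part~(iii) is then bookkeeping with the displayed formula. From \eqref{eq0thLevyFellersubBvaltimedep} and $e^{(t-s)m_B+sm_D}\le e^{t\max(m_B,m_D)}$ one gets
\[
\Bigl\|\tfrac{d}{dx}\bigl(e^{tA(x)}f(x)\bigr)\Bigr\|_B\le M_BM_D\,t\,e^{t\max(m_B,m_D)}\sup_x\|A'(x)\|_{D\to B}\,\|f\|_{C_\infty(\R,D)}+M_Be^{tm_B}\|f'\|_{C_\infty(\R,B)},
\]
together with $\|e^{tA(\cdot)}f\|_{C_\infty(\R,B)}\le M_Be^{tm_B}\|f\|_{C_\infty(\R,B)}$ and $\|e^{tA(\cdot)}f\|_{C_\infty(\R,D)}\le M_De^{tm_D}\|f\|_{C_\infty(\R,D)}$; taking the maximum of the contributions yields \eqref{eq00thLevyFellersubBvaltimedep}, and the cleaner form follows from $1+M_Bt\sup_x\|A'(x)\|_{D\to B}\le\exp\{M_Bt\sup_x\|A'(x)\|_{D\to B}\}$ and $M_B,M_D\le\max(M_B,M_D)$; estimate \eqref{eq000thLevyFellersubBvaltimedep} is obtained verbatim with $(\tilde D,D)$ in place of $(D,B)$. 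The main obstacle is the rigorous justification of the $x$-differentiation formula for $e^{tA(x)}$ in the three-space hierarchy: one must verify that each composition in the Duhamel integrand lands in the correct space, that the $s$-integral converges with bounds uniform in $x$ and in $h$, and that $h\to0$ may be taken under the integral sign — which is exactly what the hypotheses that $A(\cdot)$ and $A'(\cdot)$ be uniformly bounded and strongly continuous in \emph{both} $\LC(D,B)$ and $\LC(\tilde D,D)$, and that the $e^{tA(x)}$ be uniformly bounded on all three spaces, are there to provide.
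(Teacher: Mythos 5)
Your proof is correct and takes essentially the same route as the paper's: the pointwise semigroup structure plus uniform bounds and the identity $e^{tA(x)}v-v=\int_0^t A(x)e^{sA(x)}v\,ds$ for part (i), the Duhamel formula $\frac{d}{dx}[e^{tA(x)}f(x)]=\int_0^t e^{(t-s)A(x)}A'(x)e^{sA(x)}f(x)\,ds+e^{tA(x)}f'(x)$ for part (ii), and direct estimation of that formula for part (iii). If anything, your explicit justification of the differentiation formula via the Duhamel identity is more detailed than the paper's, which only sketches the difference quotient.
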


\begin{proof}
(i) Since
\[
e^{tA(x)}f(x)-e^{tA(x_0)}f(x_0)=e^{tA(x)}(f(x)-f(x_0))+(e^{tA(x)}-e^{tA(x_0)})f(x_0),
\]
$e^{tA(x)}f(x)$ belongs to  $C_{\infty}(\R,B)$ (resp.  $C_{\infty}(\R,D)$)
whenever $f$ does, so that the operators $e^{tA(.)}$ represent semigroups both in $C_{\infty}(\R,B)$ and $C_{\infty}(\R,D)$.
By uniform boundedness of $e^{tA(x)}$ with respect to $x$, these semigroups are locally bounded (bounded for $t$ from compact segments).

Next, for $f\in C_{\infty}(\R,D)$,
\[
e^{tA(x)}f(x)-f(x)=\int_0^t A(x)e^{sA(x)}f(x)\, ds,
\]
which tends to zero in $B$, as $t\to 0$, because $A(x)$ and $e^{tA(x)}$ are uniformly bounded as operators
from $\LC(D,B)$ and $\LC(D,D)$ respectively. By the density argument and the boundedness of the operators
$e^{tA(x)}$ in $\LC(B,B)$, it implies the strong continuity of the semigroup $e^{tA(.)}$ in $C_{\infty}(\R,B)$.

Similarly, for $f\in C_{\infty}(\R,\tilde D)$, $e^{tA(x)}f(x)-f(x)\to 0$ in $D$, as $t\to 0$, because $A(x)$
and $e^{tA(x)}$ are uniformly bounded as operators from $\LC(\tilde D,D)$ and $\LC(\tilde D,\tilde D)$ respectively.
By the boundedness of the operators $e^{tA(x)}$ in $\LC(D,D)$, it implies the strong continuity of the semigroup
$e^{tA(.)}$ in $C_{\infty}(\R,D)$.

It remains to show that any $f\in C_{\infty}(\R,D)$ belongs to the domain of the generator $A(.)$
 of the semigroup $e^{tA(.)}$ in $C_{\infty}(\R,B)$. We have
\[
\frac{1}{t} (e^{tA(x)}f(x)-f(x))=A(x)f(x)+ \frac{1}{t}\int_0^t A(x) (e^{sA(x)}-1) f(x)\, ds,
\]
and the second term tends to zero, as $t\to 0$, due to the strong continuity of $e^{sA(.)}$ in $C_{\infty}(\R,B)$.

(ii) Since
\[
\frac{d}{dx}[e^{tA(x)}f(x)]= \lim_{\de \to 0}\frac{1}{\de}[(e^{tA(x+\de)}-e^{tA(x)}f(x)+e^{tA(x+\de)}(f(x+\de)-f(x))],
\]
 we derive that, if $f\in C^1_{\infty}(\R,B)\cap C_{\infty}(\R,D)$,
\begin{equation}
\label{eq1thLevyFellersubBvaltimedep}
\frac{d}{dx}[e^{tA(x)}f(x)]=\int_0^t e^{(t-s)A(x)} A'(x) e^{sA(x)} f(x) \, ds +e^{tA(x)}f'(x)
\end{equation}
is well defined in the topology of $B$ and represents an element of $C_{\infty}(\R,B)$, because $A'(x)$ is assumed to
be bounded and strongly continuous as a family in $\LC(D,B)$.
By the strong continuity of $e^{sA(.)}$ in $C_{\infty}(\R,B)$, it follows that
\[
\frac{d}{dx}[e^{tA(x)}f(x)]\to f'(x)
\]
as $t\to 0$. But by (i), the operators $e^{sA(.)}$ depend strongly continuous on $s$ in  $C_{\infty}(\R,D)$. Consequently,
$e^{sA(.)}$ form a strongly continuous semigroup in $C^1_{\infty}(\R,B)\cap C_{\infty}(\R,D)$.

If $f\in C^1_{\infty}(\R,D)\cap C_{\infty}(\R,\tilde D)$, then formula \eqref{eq1thLevyFellersubBvaltimedep} holds also in the topology of
 $C_{\infty}(\R,D)$ implying that $e^{sA(.)}$ preserve the space
$C^1_{\infty}(\R,D)\cap C_{\infty}(\R,\tilde D)$ and act as bounded operators in the Banach topology of this space.

(iii) By \eqref{eq1thLevyFellersubBvaltimedep},
\[
\|\frac{d}{dx}[e^{tA(x)}f(x)]\|_{C_{\infty}(\R,B)}
\]
\[
\le  M_B e^{tm_B}\|f(x)\|_{C^1_{\infty}(\R,B)}
+M_B M_D t e^{(t-s)m_B} e^{sm_D} \sup_x \|A'(x)\|_{D\to B} \|f(x)\|_{C_{\infty}(\R,D)},
\]
implying the first inequality in \eqref{eq00thLevyFellersubBvaltimedep},
from which it follows that the type of growth of $e^{tA(.)}$ in
$C^1_{\infty}(\R,B)\cap C_{\infty}(\R, D)$ does not exceed $\max(m_0^B,m_0^D)$.
The last inequality in \eqref{eq00thLevyFellersubBvaltimedep} follows by
the estimate
\[
1+tM_B \sup_x \|A'(x)\|_{D\to B} \le \exp\{t M_B \sup_x \|A'(x)\|_{D\to B}\}.
\]
Similarly  \eqref{eq00thLevyFellersubBvaltimedep} is obtained from the estimate
\[
\|\frac{d}{dx}[e^{tA(x)}f(x)]\|_{C_{\infty}(\R,D)}
\]
\[
\le  M_D e^{tm_D}\|f(x)\|_{C^1_{\infty}(\R,D)}
+M_D \tilde M_D t e^{(t-s)m_D} e^{s \tilde m_D} \sup_x \|A'(x)\|_{\tilde D\to D} \|f(x)\|_{C_{\infty}(\R,D)}.
\]

\end{proof}

\begin{theorem}
\label{thLevyFellersubBvaltimedep1}
(i) Under the assumptions of Theorem \ref{thLevyFellersubBvaltimedep} (i)
let $\nu$ be a bounded measure on the ray $\{y:y>0\}$. Then the operator $L'_{\nu}+A(.)$ generates a
strongly continuous semigroup $\Phi_t^{\nu,A}$ in $C_{\infty}(\R,B)$ with the invariant core $C_{\infty}(\R,D)$,
where this semigroup is also strongly continuous. The semigroup $\Phi_t$ has the representation:
\[
 \Phi_t^{\nu,A} Y(x)=e^{-t\|\nu\|} \bigl[ e^{tA(x)}Y(x)
 +\sum_{m=1}^{\infty} \int_{0\le s_1\le \cdots \le s_m\le t} ds_1 \cdots ds_m \, \nu(dz_1) \cdots \nu (dz_m)
 \]
\begin{equation}
\label{eq2thLevyFellersubBvaltimedep1}
\times \exp\{ s_1A(x)\} \exp\{ (s_2-s_1)A(x-z_1)\} \cdots \exp\{ (t-s_m) A(x-z_1-\cdots - z_m) \} Y(x-z_1-\cdots - z_m) \bigr],
 \end{equation}
or, using notation \eqref{eqgenpathforjumpproc} for piecewise-continuous paths,
\[
 \Phi_t^{\nu,A} Y(x)=e^{-t\|\nu\|} \bigl[ e^{tA(x)}Y(x)
 +\sum_{m=1}^{\infty} \int_{0\le s_1\le \cdots \le s_m\le t} ds_1 \cdots ds_m \, \nu(dz_1) \cdots \nu (dz_m)
 \]
\begin{equation}
\label{eq1thLevyFellersubBvaltimedep1}
\times \exp\{\int_0^{s_1} A(Z_x(\tau)) d\tau \}
\exp\{ \int_{s_1}^{s_2} A(Z_x(\tau)) d\tau\}
 \cdots \exp\{ \int_{s_m}^t A(Z_x(\tau)) d\tau \} Y(Z_x(t))\bigr].
 \end{equation}

(ii) For any $b>a$, the operators $\Phi_t^{\nu,A}$ represent strongly continuous semigroups also in the spaces
$C_{kill(a)}([a,b],B)$ and $C_{kill(a)}([a,\infty),B)$ (the latter is a closed subspace of
 $C_{\infty}(\R,B)$, but the former is not).

(iii) If the assumptions of Theorem \ref{thLevyFellersubBvaltimedep} (ii) hold, then
 $L'_{\nu}+A(.)$ generates also a strongly continuous semigroup in $C^1_{\infty}(\R,B)\cap C_{\infty}(\R,D)$
 with the invariant core $C^1_{\infty}(\R,D)\cap C_{\infty}(\R,\tilde D)$. The operators $\Phi_t^{\nu,A}$ are
 bounded in the space $C^1_{\infty}(\R,D)\cap C_{\infty}(\R,\tilde D)$ equipped with its own Banach topology.

\end{theorem}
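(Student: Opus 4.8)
The plan is to treat the full generator as a \emph{bounded} perturbation of the semigroup $e^{tA(.)}$ already constructed in Theorem \ref{thLevyFellersubBvaltimedep}. First I would split $L'_{\nu} = \tilde L - \|\nu\|\cdot(\text{identity})$, where $\tilde L f(x) = \int_0^{\infty} f(x-y)\,\nu(dy)$ is the ``jump'' part. Since $\nu$ is bounded, $\|\tilde L f\| \le \|\nu\|\,\|f\|$ in $C_{\infty}(\R,B)$ and, identically, in $C_{\infty}(\R,D)$, so $\tilde L$ and hence $L'_{\nu}$ are bounded operators on each of these spaces. By Theorem \ref{thLevyFellersubBvaltimedep}(i), $A(.)$ generates a strongly continuous semigroup $e^{tA(.)}$ on $C_{\infty}(\R,B)$ with invariant core $C_{\infty}(\R,D)$, and it generates a strongly continuous semigroup on $C_{\infty}(\R,D)$ as well. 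The bounded-perturbation theorem recalled in \eqref{eqperturbseries} then shows that $A(.)+\tilde L$ generates a strongly continuous semigroup on $C_{\infty}(\R,B)$, given by the norm-convergent series \eqref{eqperturbseries} with $T_t=e^{tA(.)}$ and $L=\tilde L$; the commuting summand $-\|\nu\|\cdot(\text{identity})$ merely multiplies this semigroup by the scalar $e^{-t\|\nu\|}$, so $L'_{\nu}+A(.)$ generates $\Phi_t^{\nu,A}$, and the same reasoning inside $C_{\infty}(\R,D)$ makes that space an invariant core on which $\Phi_t^{\nu,A}$ is again strongly continuous.

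Next I would read the path-integral representation off the series. Inserting $T_t=e^{-t\|\nu\|}e^{tA(.)}$ and $L=\tilde L$ into \eqref{eqperturbseries} and expanding each occurrence of $\tilde L$ as the integral $\int_0^{\infty}(\cdot)(x-z_j)\,\nu(dz_j)$ — that is, integration against $\nu$ composed with the shift $x\mapsto x-z_j$ — turns the $m$-th term into an integral over the simplex $\{0\le s_1\le\cdots\le s_m\le t\}$ and over $\nu(dz_1)\cdots\nu(dz_m)$ of the operator product that alternates the propagators $e^{(s_{j+1}-s_j)A(\cdot)}$ evaluated at the successively shifted base-points $x,\ x-z_1,\ \dots,\ x-z_1-\cdots-z_m$, with the shifts themselves interposed. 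Because the operators $e^{sA(x)}$ genuinely act on $B$ and do not commute, the order of these factors must be preserved, which is exactly why the product occurs in the chronologically ordered form of \eqref{eq2thLevyFellersubBvaltimedep1}; identifying the argument of the $j$-th exponential as $A(Z_x(\tau))$ for $\tau\in[s_{j-1},s_j]$, where $Z_x$ is the piecewise-constant path of \eqref{eqgenpathforjumpproc} with jumps $z_j$ at times $s_j$, rewrites the product as in \eqref{eq1thLevyFellersubBvaltimedep1}. This is the same combinatorial bookkeeping as in Theorem \ref{thperturbationtheorypthint}, the only — but essential — difference being that the scalar multiplication operator there is now the operator-valued, $x$-dependent $e^{tA(x)}$; keeping this ordering straight while interchanging sum and integrals is the step I expect to require the most care.

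For part (ii), I would observe that if $Y$ vanishes on $(-\infty,a)$ then for $x<a$ one has $e^{tA(x)}Y(x)=0$ and $\tilde L Y(x)=\int_0^{\infty}Y(x-y)\,\nu(dy)=0$, since $x-y<a$ for every $y>0$; hence $C_{kill(a)}([a,\infty),B)$ is a closed $\Phi_t^{\nu,A}$-invariant subspace of $C_{\infty}(\R,B)$ and $\Phi_t^{\nu,A}$ restricts to it as a strongly continuous semigroup. For $C_{kill(a)}([a,b],B)$, which is not a subspace of $C_{\infty}(\R,B)$, the explicit formula \eqref{eq1thLevyFellersubBvaltimedep1} shows (extending functions by zero to the left of $a$) that $\Phi_t^{\nu,A}Y(x)$ with $x\in[a,b]$ depends only on the values of $Y$ on $[a,x]$; thus it is well defined, preserves vanishing at $a$, and inherits the semigroup property, local boundedness, and strong continuity from the $C_{uc}((-\infty,b],B)$-level statements.

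Finally, for part (iii), Theorem \ref{thLevyFellersubBvaltimedep}(ii) supplies the strongly continuous semigroup $e^{tA(.)}$ on the Banach space $E:=C^1_{\infty}(\R,B)\cap C_{\infty}(\R,D)$ with invariant core $\tilde E:=C^1_{\infty}(\R,D)\cap C_{\infty}(\R,\tilde D)$. Since differentiation commutes with $L'_{\nu}$, i.e. $(L'_{\nu}f)'=L'_{\nu}(f')$, and $\nu$ is bounded, $L'_{\nu}$ is a bounded operator on both $E$ and $\tilde E$. The bounded-perturbation theorem then applies verbatim on $E$, producing the strongly continuous semigroup $\Phi_t^{\nu,A}$ generated by $A(.)+L'_{\nu}$ on $E$; because adding a bounded operator to a generator leaves every core a core, $\tilde E$ remains a core, and restricting the norm-convergent series \eqref{eqperturbseries} to $\tilde E$ shows $\Phi_t^{\nu,A}$ maps $\tilde E$ into itself and is bounded there in the intrinsic Banach topology of $\tilde E$.
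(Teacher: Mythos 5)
Your proposal is correct and follows essentially the same route as the paper: decompose $L'_{\nu}+A(.)$ so that the jump integral $\int f(x-y)\,\nu(dy)$ is a bounded perturbation of the semigroup $e^{-t\|\nu\|}e^{tA(.)}$ from Theorem \ref{thLevyFellersubBvaltimedep}, read the chronologically ordered representation \eqref{eq2thLevyFellersubBvaltimedep1} off the perturbation series \eqref{eqperturbseries}, deduce the invariance of the killed subspaces from that explicit formula, and repeat the perturbation argument in $C^1_{\infty}(\R,B)\cap C_{\infty}(\R,D)$ for part (iii). Your write-up is in fact somewhat more detailed than the paper's proof on the simplex/ordering bookkeeping and on part (ii), but the substance is identical.
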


\begin{proof}
(i) Since $L'_{\nu}$ is a bounded operator both in $C_{\infty}(\R,B)$ and in $C_{\infty}(\R,D)$,
it follows from the perturbation theory that the operator
\[
(L'_{\nu}+A(.))f(x)=\int f(x-y) \nu (dy) +(A(x)-\|\nu\|)f(x)
\]
generates a
strongly continuous semigroup in $C_{\infty}(\R,B)$ with the invariant core $C_{\infty}(\R,D)$,
where this semigroup is also strongly continuous. Moreover, formula \eqref{eqperturbseries} (with
the operator $\int f(x-y) \nu (dy)$ considered as a bounded perturbation) provides representation
\eqref{eq2thLevyFellersubBvaltimedep1}. Unlike \eqref{eqperturbseriespathint} the operators $A(x)$
may not commute and thus the exponents can not be put together. Due to notations \eqref{eqgenpathforjumpproc},
equations \eqref{eq1thLevyFellersubBvaltimedep1} and \eqref{eq2thLevyFellersubBvaltimedep1} are equivalent.

(ii) The invariance of the spaces $C_{kill(a)}([a,b],B)$ and $C_{kill(a)}([a,\infty),B)$ under $\Phi_t^{\nu,A}$ is seen
from \eqref{eq2thLevyFellersubBvaltimedep1}.

(iii)This follows again by the perturbation theory and the observation that the operators
$e^{tA(.)}$ and $L'_{\nu}$ are bounded in  the space $C^1_{\infty}(\R,D)\cap C_{\infty}(\R,\tilde D)$ equipped with its own Banach topology.
\end{proof}

Recall that the product of exponents in  \eqref{eq1thLevyFellersubBvaltimedep1} or \eqref{eq2thLevyFellersubBvaltimedep1}
is called the (backward) chronological or time-ordered exponential (or $T$-product) that is usually denoted $T\exp\{\int_0^t A(Z_x(\tau)) \, d\tau \}$
(we use the letter $T$ for the backward exponentials, as forward exponentials will not be used here at all).

Denoting by $\nu^{PC}$ the measure on $PC_x(t)$ constructed from $\nu$
we can rewrite   \eqref{eq1thLevyFellersubBvaltimedep1} as

\[
 \Phi_t^{\nu,A} Y(x)=e^{-t\|\nu\|} \int_{PC_x(t)} F(Z_x(.)) \nu^{PC}(dZ(.))
\]
with
\[
F(Z_x(.))= T\exp\{\int_0^t A(Z_x(\tau)) \, d\tau \}Y(Z_x(t)).
\]
Introducing the normalized probability measure $\tilde \nu^{PC}=e^{-t\|\nu\|}\nu^{PC}$ and denoting by
$\E_{\nu}$ (the expectation) the integration with respect to this measure on the path-space $PC_x(t)$ we arrive at the main representation formula.

\begin{corollary}
\label{cor1thLevyFellersubBvaltimedep1}
Under the assumptions of Theorem \ref{thLevyFellersubBvaltimedep1},
 the semigroup $\Phi_t^{\nu,A}$ yielding the unique solution to the Cauchy problem
\begin{equation}
\label{eq3-1thLevyFellersubBvaltimedep1}
 \dot \mu_t(x)=A(x)\mu_t(x) -D^{(\nu)}_+ \mu_t(x), \quad \mu_0=Y,
 \end{equation}
 has the following integral representation in terms of the backward chronological exponential:
\begin{equation}
\label{eq3thLevyFellersubBvaltimedep1}
 \Phi_t^{\nu,A} Y(x)=\int_{PC_x(t)} F(Z_x(.)) \tilde \nu^{PC}(dZ(.))
 =\E_{\nu} [ T\exp\{\int_0^t A(Z_x(\tau)) \, d\tau \}Y(Z_x(t))].
 \end{equation}
\end{corollary}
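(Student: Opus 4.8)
The plan is to recognize that the Cauchy problem \eqref{eq3-1thLevyFellersubBvaltimedep1} is nothing but the abstract Cauchy problem for the operator $A(.)+L'_{\nu}$ --- since $D^{(\nu)}_+=-L'_{\nu}$ by definition --- and then to read off the representation directly from Theorem \ref{thLevyFellersubBvaltimedep1}. First I would invoke that theorem: under the stated assumptions $L'_{\nu}+A(.)$ generates the strongly continuous semigroup $\Phi_t^{\nu,A}$ in $C_{\infty}(\R,B)$ with invariant core $C_{\infty}(\R,D)$, and by the elementary theory of $C_0$-semigroups the curve $\mu_t=\Phi_t^{\nu,A}Y$ is, for $Y$ in the core, the unique classical solution of \eqref{eq3-1thLevyFellersubBvaltimedep1} valued in the core, and for arbitrary $Y$ the unique mild solution. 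This disposes of the existence-and-uniqueness assertion of the corollary.

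The main step is to rewrite the operator-norm-convergent series \eqref{eq1thLevyFellersubBvaltimedep1} delivered by Theorem \ref{thLevyFellersubBvaltimedep1} as the path integral $\int_{PC_x(t)}F(Z_x(.))\,\nu^{PC}(dZ(.))$ with
\[
F(Z_x(.))=T\exp\Bigl\{\int_0^t A(Z_x(\tau))\,d\tau\Bigr\}Y(Z_x(t)).
\]
I would compare \eqref{eq1thLevyFellersubBvaltimedep1} term by term with the definition \eqref{eqdefpathintpiecewiseconst}: since $\nu^{PC}$ restricted to the component $PC_x^n(t)$ of $n$-jump paths is, by construction, the product of Lebesgue measure on the simplex $Sim_t^n$ with $n$ copies of $\nu$, the $n$-th summand of \eqref{eq1thLevyFellersubBvaltimedep1} is exactly $\int_{PC_x^n(t)}F(Z_x(.))\,\nu^{PC}(dZ(.))$. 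Here one uses that along a path parametrized as in \eqref{eqgenpathforjumpproc} the value $A(Z_x(\tau))$ is constant on each of the $n+1$ inter-jump intervals $[s_{j-1},s_j]$ (with $s_0=0$, $s_{n+1}=t$), equal to $A(x-z_1-\cdots-z_{j-1})$, so that $\exp\{\int_{s_{j-1}}^{s_j}A(Z_x(\tau))\,d\tau\}=\exp\{(s_j-s_{j-1})A(x-z_1-\cdots-z_{j-1})\}$ and the ordered left-to-right product of these exponentials is precisely the backward $T$-product in the sense fixed after \eqref{eqdefgenCauchychrexp}; this identification is already the one implicit in the passage from \eqref{eqperturbseries} to \eqref{eq2thLevyFellersubBvaltimedep1}. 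Summing over $n$ gives $\Phi_t^{\nu,A}Y(x)=e^{-t\|\nu\|}\int_{PC_x(t)}F(Z_x(.))\,\nu^{PC}(dZ(.))$.

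Finally I would normalize. Because $\nu$ is a bounded measure, the total mass of $\nu^{PC}=\nu^{PC}(t,x)$ equals $e^{t\|\nu\|}$, so $\tilde\nu^{PC}=e^{-t\|\nu\|}\nu^{PC}$ is a probability measure on $PC_x(t)$ and integration against it is by definition the expectation $\E_{\nu}$; combining with the prefactor $e^{-t\|\nu\|}$ yields \eqref{eq3thLevyFellersubBvaltimedep1}.

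The step I expect to require the most care is the term-by-term identification of the series with the path integral, and in particular the interchange of the infinite sum with the integration over path space. This is legitimate because, $\nu$ being bounded, the series \eqref{eq1thLevyFellersubBvaltimedep1} converges absolutely in operator norm and the measure $\nu^{PC}$ is finite, so Fubini applies; all the genuinely analytic content --- strong continuity, the choice of cores, and the semigroup estimates --- was already carried out in Theorems \ref{thLevyFellersubBvaltimedep} and \ref{thLevyFellersubBvaltimedep1}, and the corollary is in effect a change of notation.
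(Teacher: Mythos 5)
Your proposal is correct and follows essentially the same route as the paper: the paper also obtains the corollary by identifying the ordered product of exponentials in the perturbation series \eqref{eq1thLevyFellersubBvaltimedep1} with the backward $T$-product along piecewise-constant paths, recognizing the series as the path integral $e^{-t\|\nu\|}\int_{PC_x(t)}F(Z_x(.))\,\nu^{PC}(dZ(.))$ via \eqref{eqdefpathintpiecewiseconst}, and then normalizing by the total mass $e^{t\|\nu\|}$ to pass to the expectation $\E_{\nu}$. Your explicit remarks on the Fubini interchange and on uniqueness via standard semigroup theory are details the paper leaves implicit, but they do not change the argument.
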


The next consequence shows that formula \eqref{eq2thLevyFellersubBvaltimedep1} allows one to find
 the growth of the semigroup $\Phi_t^{\nu,A}$, whenever the growth of $e^{tA(x)}$ is known.

\begin{corollary}
\label{cor2thLevyFellersubBvaltimedep1}
Under the assumptions of Theorem \ref{thLevyFellersubBvaltimedep1} (i)-(iii),
\begin{equation}
\label{eq4thLevyFellersubBvaltimedep1}
\| \Phi_t^{\nu,A} \|_{\LC(C_{\infty}(\R,B))}\le M_B \exp\{ t(m_B+\|\nu\|(M_B-1)) \},
 \end{equation}
 \begin{equation}
\label{eq4athLevyFellersubBvaltimedep1}
\| \Phi_t^{\nu,A} \|_{\LC(C_{\infty}(\R,D))}\le M_D \exp\{ t(m_D+\|\nu\|(M_D-1)) \},
 \end{equation}
\[
\| \Phi_t^{\nu,A} \|_{\LC(C^1_{\infty}(\R,B)\cap C_{\infty}(\R,D))}
\le \max(M_B,M_D)
\]
\begin{equation}
\label{eq5thLevyFellersubBvaltimedep1}
\exp\{ t[\max(m_D,m_B)+M_B \sup_x \|A'(x)\|_{D\to B}+\|\nu\|(\max(M_B,M_D)-1)] \},
 \end{equation}
\[
\| \Phi_t^{\nu,A} \|_{\LC(C^1_{\infty}(\R,D)\cap C_{\infty}(\R,\tilde D))}
\le \max(M_D, \tilde M_D)
\]
\begin{equation}
\label{eq5athLevyFellersubBvaltimedep1}
\exp\{ t[\max(m_D,\tilde m_D)+M_D \sup_x \|A'(x)\|_{\tilde D\to D}+\|\nu\|(\max(M_D, \tilde M_D)-1)] \},
 \end{equation}

In particular, if the semigroups $e^{tA(x)}$ are regular in $B$ and $D$ in the sense that
\eqref{eq0thLevyFellersubBvaltimedep} holds with $M_D=M_B=1$ and some $m_D,m_B$,
which is equivalent to the requirement that
\begin{equation}
\label{eqregboun}
\sup_t \sup_x \frac{1}{t} \ln \|e^{tA(x)}\|_{\LC(B)} <\infty,
\quad
\sup_t \sup_x \frac{1}{t} \ln \|e^{tA(x)}\|_{\LC(D)} <\infty,
 \end{equation}
then so is the semigroup $\Phi_t^{\nu,A}$ both in $C_{\infty}(\R,B)$ and
$C^1_{\infty}(\R,B)\cap C_{\infty}(\R,D)$, and its growth rates are given by the estimates
\begin{equation}
\label{eq6thLevyFellersubBvaltimedep1}
\| \Phi_t^{\nu,A} \|_{\LC(C_{\infty}(\R,B))}\le \exp\{ t m_B \},
\quad
\| \Phi_t^{\nu,A} \|_{\LC(C_{\infty}(\R,D))}\le \exp\{ t m_D \},
 \end{equation}
\begin{equation}
\label{eq7thLevyFellersubBvaltimedep1}
\| \Phi_t^{\nu,A} \|_{\LC(C^1_{\infty}(\R,B)\cap C_{\infty}(\R,D))}
\le \exp\{ t[\max(m_D,m_B)+\sup_x \|A'(x)\|_{D\to B}] \},
 \end{equation}
independent of $\nu$.
\end{corollary}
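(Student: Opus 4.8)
The plan is to derive every inequality directly from the perturbation-series representation \eqref{eq2thLevyFellersubBvaltimedep1} of $\Phi_t^{\nu,A}$ established in Theorem~\ref{thLevyFellersubBvaltimedep1}, combined with the growth estimates for $e^{tA(.)}$ already recorded in Theorem~\ref{thLevyFellersubBvaltimedep}. First I would exploit the splitting $L'_\nu f(x)=\int f(x-y)\,\nu(dy)-\|\nu\|f(x)$, so that $L'_\nu+A(.)$ is the generator $A(.)-\|\nu\|$ of the rescaled semigroup $e^{-t\|\nu\|}e^{tA(.)}$ perturbed by the bounded operator $Lf(x)=\int f(x-y)\,\nu(dy)$. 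The key elementary observation is that $\|L\|\le\|\nu\|$ in each of the four spaces $C_\infty(\R,B)$, $C_\infty(\R,D)$, $C^1_\infty(\R,B)\cap C_\infty(\R,D)$ and $C^1_\infty(\R,D)\cap C_\infty(\R,\tilde D)$: indeed $L$ acts componentwise and commutes with $d/dx$ (being translation-based), so the naive bound $\|\nu\|$ survives in the intersection norms. With this, formula \eqref{eqperturbseries} gives $\Phi_t^{\nu,A}=e^{-t\|\nu\|}\sum_{m\ge0}\int_{0\le s_1\le\cdots\le s_m\le t}S_{s_1}LS_{s_2-s_1}\cdots LS_{t-s_m}\,ds_1\cdots ds_m$ with $S_t=e^{tA(.)}$, which is exactly \eqref{eq2thLevyFellersubBvaltimedep1}.

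The second step is the standard perturbation-series estimate with a possibly non-contractive unperturbed semigroup. If $\|S_t\|\le Ne^{\omega t}$ in a given space, then each summand satisfies $\|S_{s_1}LS_{s_2-s_1}\cdots LS_{t-s_m}\|\le N^{m+1}\|\nu\|^m e^{\omega t}$, the crucial bookkeeping being that the time increments $s_1,\,s_2-s_1,\,\dots,\,t-s_m$ sum to $t$; integrating over the simplex $Sim_t^m$, whose volume $t^m/m!$ cancels the factor $N^{m+1}$, and summing the resulting series yields $\|\Phi_t^{\nu,A}\|\le e^{-t\|\nu\|}Ne^{\omega t}\sum_{m\ge0}(N\|\nu\|t)^m/m!=N\exp\{t(\omega+\|\nu\|(N-1))\}$. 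Substituting $(N,\omega)=(M_B,m_B)$ and $(N,\omega)=(M_D,m_D)$ (Theorem~\ref{thLevyFellersubBvaltimedep}, or already Proposition~\ref{thLevyFellersubBval}) gives \eqref{eq4thLevyFellersubBvaltimedep1} and \eqref{eq4athLevyFellersubBvaltimedep1}.

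For the two intersection spaces the one point needing care is that one must \emph{not} try to bound the individual exponential factors $e^{(s_{j+1}-s_j)A(x-z_1-\cdots-z_j)}$ in the $C^1$-norm separately: differentiating such a product in $x$ brings in the terms involving $A'(x)$ seen in \eqref{eq1thLevyFellersubBvaltimedep}, so a factorwise $C^1$-bound is simply false. Instead I would feed into the abstract estimate of the previous paragraph the already-proved operator-norm bounds \eqref{eq00thLevyFellersubBvaltimedep} and \eqref{eq000thLevyFellersubBvaltimedep} for the \emph{whole} semigroup $S_t=e^{tA(.)}$ in $C^1_\infty(\R,B)\cap C_\infty(\R,D)$ and in $C^1_\infty(\R,D)\cap C_\infty(\R,\tilde D)$ — which is legitimate since that estimate only requires a bound on $\|S_t\|$ — taking $N=\max(M_B,M_D)$, $\omega=\max(m_B,m_D)+M_B\sup_x\|A'(x)\|_{D\to B}$ in the first case and $N=\max(M_D,\tilde M_D)$, $\omega=\max(m_D,\tilde m_D)+\tilde M_D\sup_x\|A'(x)\|_{\tilde D\to D}$ in the second; this produces \eqref{eq5thLevyFellersubBvaltimedep1} and \eqref{eq5athLevyFellersubBvaltimedep1}. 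Finally, for the regularity remark I would note that \eqref{eqregboun} is precisely equivalent to \eqref{eq0thLevyFellersubBvaltimedep} holding with $M_B=M_D=1$ — setting $m_B:=\sup_{t,x}\tfrac1t\ln\|e^{tA(x)}\|_{\LC(B)}$ and $m_D$ analogously gives $\|e^{tA(x)}\|\le e^{tm_B}$ and $\le e^{tm_D}$ for all $t\ge0$ — whereupon the factor $\|\nu\|(N-1)$ vanishes in the estimates above, leaving \eqref{eq6thLevyFellersubBvaltimedep1} and \eqref{eq7thLevyFellersubBvaltimedep1}, manifestly independent of $\nu$. I expect no genuine obstacle: the whole proof is the perturbation-series bound plus careful bookkeeping, and the only real pitfall is the $C^1$-intersection estimate, which must go through the whole-semigroup bound rather than a factorwise one.
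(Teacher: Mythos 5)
Your proposal is correct and follows essentially the same route as the paper: the paper's own proof is precisely the perturbation-series bound $\|\Phi_t^{\nu,A}\|\le e^{-t\|\nu\|}M_Be^{m_Bt}\sum_n\|\nu\|^nM_B^nt^n/n!$ for \eqref{eq4thLevyFellersubBvaltimedep1}, with the intersection-space estimates obtained identically by feeding the whole-semigroup bounds \eqref{eq00thLevyFellersubBvaltimedep} and \eqref{eq000thLevyFellersubBvaltimedep} into the same series. Your explicit warning about not bounding the exponential factors factorwise in the $C^1$-norm, and your verification that $\|L\|\le\|\nu\|$ in the intersection norms, merely spell out details the paper leaves implicit.
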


\begin{proof}
By \eqref{eq2thLevyFellersubBvaltimedep1},
\[
\| \Phi_t^{\nu,A} \|_{C_{\infty}(\R,B)} \le M_Be^{m_Bt}(1 +\sum_{n=1}^{\infty} \|\nu\|^nM_B^n t^n/n!),
\]
implying  \eqref{eq4thLevyFellersubBvaltimedep1}. Similarly  other estimates are obtained due to
\eqref{eq00thLevyFellersubBvaltimedep} and \eqref{eq000thLevyFellersubBvaltimedep}.
\end{proof}

We can now address problem  \eqref{eqlinder1unbgentimedep} in the simplest case of bounded $\nu$.

\begin{theorem}
\label{thLevyFellersubBvaltimedep2}
Under the assumptions of Theorem \ref{thLevyFellersubBvaltimedep1},
the resolvent operators $R^{A,\nu}_{\la}$ of the semigroup $\Phi_t^{\nu,A}$ in the space $C_{kill(a)}([a,\infty),B)$
yielding the classical solutions to the problems
 \begin{equation}
\label{eq1thLevyFellersubBvaltimedep2}
(\la - A(x)+ D^{(\nu)}_{a+})\mu(x)=g(x), \quad \mu(a)=0, \quad x\ge a,
\end{equation}
are well defined for
\[
\la > m_B+\|\nu\|(M_B-1),
\]
and are given  by the formula
 \begin{equation}
\label{eq2thLevyFellersubBvaltimedep2}
R^{A,\nu}_{\la}g(x) =\int_0^{\infty} e^{-\la t} \E_{\nu} \,[ T\exp\{\int_0^t A(Z_x(\tau)) \, d\tau \}g(Z_x(t))] \, dt.
\end{equation}

When reduced to $C_{kill(a)}([a,b],B)$, they are also well defined for  $\la \ge m_B+\|\nu\|(M_B-1)$.
In particular, if all semigroups generated by $A(x)$ in $B$ are contractions,
problem \eqref{eqlinder1unbgentimedep} with $Y=0$ has a unique classical solution (belonging to the domain of the
generator of the semigroup $\Phi_t^{\nu,A}$ in $C_{kill(a)}([a,b],B)$)
given by \eqref{eq2thLevyFellersubBvaltimedep2} with $\la=0$ for any $g\in C_{kill(a)}([a,b],B)$.
\end{theorem}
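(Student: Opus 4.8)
The plan is to obtain $R^{A,\nu}_{\la}$ as the Laplace transform in time of the semigroup $\Phi_t^{\nu,A}$ and then to read off \eqref{eq2thLevyFellersubBvaltimedep2} from the path-integral representation of $\Phi_t^{\nu,A}$ supplied by Corollary \ref{cor1thLevyFellersubBvaltimedep1}. Concretely, I would first invoke Theorem \ref{thLevyFellersubBvaltimedep1} (ii) to regard $\Phi_t^{\nu,A}$ as a strongly continuous semigroup on the closed invariant subspace $C_{kill(a)}([a,\infty),B)$ of $C_{\infty}(\R,B)$, whose growth bound there is at most $m_B+\|\nu\|(M_B-1)$ by \eqref{eq4thLevyFellersubBvaltimedep1}. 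For $\la>m_B+\|\nu\|(M_B-1)$ the Bochner integral $\int_0^{\infty}e^{-\la t}\Phi_t^{\nu,A}\,dt$ then converges in operator norm and equals $(\la-G)^{-1}$, $G$ being the generator of $\Phi_t^{\nu,A}$ on that subspace. Since every element of $C_{kill(a)}([a,\infty),B)$ vanishes to the left of $a$, the anticipating part of the generator collapses on the domain, $D^{(\nu)}_+f(x)=D^{(\nu)}_{a+}f(x)$ for $x\ge a$, so $G=A(.)-D^{(\nu)}_{a+}$; thus $\mu=R^{A,\nu}_{\la}g$ lies in the domain of $G$, solves \eqref{eq1thLevyFellersubBvaltimedep2} classically with $\mu(a)=0$, and is the unique such solution because $\la$ is in the resolvent set. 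Plugging the expression for $\Phi_t^{\nu,A}$ from Corollary \ref{cor1thLevyFellersubBvaltimedep1} into the Laplace integral yields \eqref{eq2thLevyFellersubBvaltimedep2}.

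For the sharper statement on the bounded interval I would show that the restriction of $\Phi_t^{\nu,A}$ to the invariant subspace $C_{kill(a)}([a,b],B)$ has growth bound \emph{strictly} below $m_B+\|\nu\|(M_B-1)$. The mechanism is that, extending $g\in C_{kill(a)}([a,b],B)$ by $0$ to the left of $a$, in the perturbation series \eqref{eq2thLevyFellersubBvaltimedep1} only jump configurations with $z_1+\cdots+z_m<x-a\le b-a$ contribute (otherwise the path has left $(a,\infty)$ and $g(x-z_1-\cdots-z_m)=0$). Since $\nu\neq 0$ is bounded, one can fix $\de>0$ with $c:=\nu([\de,\infty))>0$, and any contributing configuration has at most $K:=\lceil(b-a)/\de\rceil$ jumps of size $\ge\de$. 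Splitting $\nu=\nu|_{[\de,\infty)}+\nu|_{(0,\de)}$, bounding the $m$-fold $\nu$-integral over admissible configurations by $\sum_{j=0}^{\min(m,K)}\binom{m}{j}c^j(\|\nu\|-c)^{m-j}$, and combining with the chronological-exponential estimate $\|T\exp\{\int_0^tA(Z_x(\tau))\,d\tau\}\|\le M_B^{m+1}e^{m_Bt}$ on a path with $m$ jumps together with the factors $e^{-t\|\nu\|}$ and $t^m/m!$, I expect to arrive at a bound of the form
\[
\|\Phi_t^{\nu,A}g\|_{C_{kill(a)}([a,b],B)}\le p(t)\,e^{t(m_B+\|\nu\|(M_B-1)-M_Bc)}\,\|g\|,
\]
with $p$ a polynomial of degree $K$ (of Touchard type). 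Hence the Laplace integral converges, and $R^{A,\nu}_{\la}$ is the resolvent of the restricted semigroup, for every $\la>m_B+\|\nu\|(M_B-1)-M_Bc$, in particular at $\la=m_B+\|\nu\|(M_B-1)$.

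The contraction case is then immediate: taking $M_B=1$, $m_B=0$ gives $m_B+\|\nu\|(M_B-1)=0$, so by the previous step $0$ is in the resolvent set of the generator of $\Phi_t^{\nu,A}$ on $C_{kill(a)}([a,b],B)$, and $\mu=R^{A,\nu}_0g$, i.e. \eqref{eq2thLevyFellersubBvaltimedep2} with $\la=0$, lies in its domain and solves $(-A(.)+D^{(\nu)}_{a+})\mu=g$; since $\mu\in C_{kill(a)}([a,b],B)$ we have $\mu(a)=0$, whence $D^{(\nu)}_{a+}\mu=D^{(\nu)}_{a+*}\mu$ and $\mu$ is the unique domain-solution of \eqref{eqlinder1unbgentimedep} with $Y=0$.

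The routine ingredients are the Laplace-transform identity for the resolvent and the collapse $D^{(\nu)}_+=D^{(\nu)}_{a+}$ on functions killed at $a$. The hard part will be the estimate of the second paragraph: controlling the perturbation series after restricting to jump configurations of bounded total size, so as to extract the strict gain $M_Bc$ in the exponent — this gain is exactly what lets $\la$ reach the boundary value, and $\la=0$ in the contraction case. (The degenerate case $\nu=0$, where no such gain exists, is excluded as uninteresting.)
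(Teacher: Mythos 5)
Your proposal is correct, and it is worth noting that the paper states this theorem without any proof at all, so the comparison is necessarily with the argument the author implicitly relies on. Your first and third paragraphs are exactly that implicit argument: the Laplace transform of the strongly continuous semigroup $\Phi_t^{\nu,A}$ on the closed invariant subspace $C_{kill(a)}([a,\infty),B)$ converges for $\la$ above the growth bound $m_B+\|\nu\|(M_B-1)$ supplied by \eqref{eq4thLevyFellersubBvaltimedep1} and equals the resolvent, and the collapse $D^{(\nu)}_+=D^{(\nu)}_{a+}$ (and then $=D^{(\nu)}_{a+*}$ once $\mu(a)=0$) on functions killed at $a$ identifies the generator with $A(\cdot)-D^{(\nu)}_{a+}$. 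Where you genuinely add something is the second paragraph: the claim that on $C_{kill(a)}([a,b],B)$ the resolvent exists at the boundary value $\la=m_B+\|\nu\|(M_B-1)$ is not justified anywhere in the paper, and your jump-counting estimate (at most $K=\lceil (b-a)/\de\rceil$ jumps of size $\ge\de$ before the path leaves $(a,\infty)$, hence a strict gain $M_B c$ in the exponent at the cost of a degree-$K$ polynomial) is a correct and complete way to close it; I checked the combinatorics, and the resummation indeed gives $\sum_{j\le K}(M_Btc)^j/j!\cdot e^{M_Bt(\|\nu\|-c)}$ times $M_Be^{(m_B-\|\nu\|)t}$. For the contraction case the paper's own toolkit suggests a shorter route that you might mention as an alternative: with $M_B=1$, $m_B=0$ the chronological exponentials are contractions, so
\[
\Bigl\| \int_0^{\infty} \E_{\nu}\,[\,\1_{\si_a>s}\,T\exp\{\textstyle\int_0^s A(Z_x(\tau))\,d\tau\}g(Z_x(s))]\,ds \Bigr\|
\le \|g\|\,\E_{\nu}\si_a = \|g\|\,U^{(\nu)}([0,x-a]) \le \|g\|\,e^{k(b-a)}/\phi_{\nu}(k),
\]
by the potential-measure bound \eqref{eqestimpotmes}, exactly as in Proposition \ref{propLevyFellersubpotenfund1}(iii) and Theorem \ref{thBvalgenfrac}(i). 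Your argument is longer but buys the boundary case for general $M_B>1$ as well, which the potential-measure shortcut does not directly give. The only caveats are cosmetic: the exclusion of $\nu=0$ should be stated as a standing assumption (the boundary claim is genuinely false in that degenerate case), and in \eqref{eq2thLevyFellersubBvaltimedep2} the interchange of the $t$-integral with $\E_{\nu}$ needed to pass between the Laplace-transform form and the path-integral form is justified by the absolute convergence you have already established.
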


Since $g\in C_{kill(a)}([a,\infty),B)$, formula \eqref{eq2thLevyFellersubBvaltimedep2} rewrites as
 \begin{equation}
\label{eq3thLevyFellersubBvaltimedep2}
R^{A,\nu}_{\la}g(x) =\E_{\nu} \int_0^{\si_a} e^{-\la t} [ T\exp\{\int_0^t A(Z_x(\tau)) \, d\tau \}g(Z_x(t))] \, dt,
\end{equation}
where $\si_a=\inf\{t: Z_x(t) \le a\}$.
This formula can be used to define various generalized solutions to \eqref{eq1thLevyFellersubBvaltimedep2}.


\section{Time-nonhomogeneous case: arbitrary $\nu$}
\label{secgenfractimedep1}

Let us turn to problems \eqref{eqlinder1unbgentimedep} with an unbounded $\nu$.

\begin{theorem}
\label{thLevyFellersubBvaltimedep3}
(i) Under the assumptions of Theorem \ref{thLevyFellersubBvaltimedep} (i)-(iii) let the semigroups
$e^{tA(x)}$ be regular in $B$ and $D$ in the sense that \eqref{eq0thLevyFellersubBvaltimedep} holds
 with $M_D=M_B=1$ and some $m_D,m_B$ (equivalently, if \eqref{eqregboun} hold). Let $\nu$ be a measure
  on the ray $\{y:y>0\}$ satisfying \eqref{eq0diftointfraceqgennu}.
Then the operator $L'_{\nu}+A(.)$ generates a strongly continuous semigroup $\Phi_t^{\nu,A}$
both in $C_{\infty}(\R,B)$ and $C_{\infty}(\R,D)$ solving the Cauchy problem
\eqref{eq3-1thLevyFellersubBvaltimedep1}, with the domains of the generator containing the spaces
 $C^1_{\infty}(\R,B)\cap C_{\infty}(\R,D)$ and $C^1_{\infty}(\R,B)\cap C_{\infty}(\R,D)$ respectively.
  The semigroup $\Phi_t^{\nu,A}$ can be obtained as the limit, as $\ep \to 0$,
of the semigroups $\Phi_t^{\nu_{\ep},A}$ built by Theorem  \ref{thLevyFellersubBvaltimedep1} for the finite approximations
$\nu_{\ep}(dy)=\1_{|y|\ge \ep}\nu(dy)$ of $\nu$, so that the semigroup $\Phi_t^{\nu,A}$ has the representation
\begin{equation}
\label{eq1thLevyFellersubBvaltimedep3}
\Phi_t^{\nu,A} Y(x)=\lim_{\ep \to 0} \Phi_t^{\nu_{\ep},A} Y(x)
 =\lim_{\ep \to 0}\E_{\nu_{\ep}} [ T\exp\{\int_0^t A(Z_x(\tau)) \, d\tau \}Y(Z_x(t))],
 \end{equation}
 where the limit is well defined both in the topologies of $B$ and $D$, and
\begin{equation}
\label{eq1athLevyFellersubBvaltimedep3}
\| \Phi_t^{\nu,A} \|_{\LC(C_{\infty}(\R,B))}\le \exp\{ t m_B \},
\quad
\| \Phi_t^{\nu,A} \|_{\LC(C_{\infty}(\R,D))}\le \exp\{ t m_D \},
 \end{equation}

(ii) For any $b>a$, the operators $\Phi_t^{\nu,A}$ represent strongly continuous semigroups also in the spaces
$C_{kill(a)}([a,b],B)$, $C_{kill(a)}([a,\infty),B)$, $C_{kill(a)}([a,b],D)$, $C_{kill(a)}([a,\infty),D)$.
\end{theorem}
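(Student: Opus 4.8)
The plan is to obtain $\Phi_t^{\nu,A}$ as a strong limit of the semigroups $\Phi_t^{\nu_\ep,A}$ already constructed in Theorem~\ref{thLevyFellersubBvaltimedep1} for the finite truncations $\nu_\ep(dy)=\1_{|y|\ge\ep}\nu(dy)$, and then to transfer all the asserted properties to the limit. The crucial preliminary remark is that, under the present regularity hypothesis $M_B=M_D=1$, Corollary~\ref{cor2thLevyFellersubBvaltimedep1} (estimates \eqref{eq6thLevyFellersubBvaltimedep1}--\eqref{eq7thLevyFellersubBvaltimedep1}) bounds $\Phi_t^{\nu_\ep,A}$ in $C_\infty(\R,B)$, in $C_\infty(\R,D)$ and in $C^1_\infty(\R,B)\cap C_\infty(\R,D)$ by constants independent of $\ep$ (equivalently of $\|\nu_\ep\|$). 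This $\ep$-uniform boundedness in the three relevant spaces is precisely what the regularity assumption buys, and without it the limit would not survive.

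The heart of the argument is to show that $\{\Phi_t^{\nu_\ep,A}\}$ is strongly Cauchy as $\ep\to0$. Fix $f$ in the common invariant core $C^1_\infty(\R,D)\cap C_\infty(\R,\tilde D)$ supplied by Theorem~\ref{thLevyFellersubBvaltimedep1}(iii). For $\ep<\ep'$ (both small) one has the Duhamel identity
\[
\Phi_t^{\nu_\ep,A}f-\Phi_t^{\nu_{\ep'},A}f=\int_0^t \Phi_{t-s}^{\nu_\ep,A}\,(L'_{\nu_\ep}-L'_{\nu_{\ep'}})\,\Phi_s^{\nu_{\ep'},A}f\,ds,
\]
legitimate because $\Phi_s^{\nu_{\ep'},A}f$ stays in the core, hence in the domains of both generators. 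Now $L'_{\nu_\ep}-L'_{\nu_{\ep'}}$ is the operator $g\mapsto\int_\ep^{\ep'}(g(\cdot-y)-g(\cdot))\,\nu(dy)$, which by the one-sided L\'evy condition \eqref{eq0diftointfraceqgennu} is bounded from $C^1_\infty(\R,B)$ into $C_\infty(\R,B)$ with norm at most $\int_0^{\ep'}y\,\nu(dy)\to0$; meanwhile $\Phi_s^{\nu_{\ep'},A}f$ is bounded in $C^1_\infty(\R,B)\cap C_\infty(\R,D)$ uniformly in $s\le t$ and in $\ep'$ by the first paragraph, and $\Phi_{t-s}^{\nu_\ep,A}$ is $\ep$-uniformly bounded on $C_\infty(\R,B)$. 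Hence the right-hand side is $O\bigl(t\int_0^{\ep'}y\,\nu(dy)\bigr)$ in $C_\infty(\R,B)$, uniformly on compact $t$-intervals, so $\Phi_t^{\nu_\ep,A}f$ is Cauchy; by density of the core and the uniform bounds, $\Phi_t^{\nu_\ep,A}$ converges strongly on all of $C_\infty(\R,B)$ to operators $\Phi_t^{\nu,A}$. Running the same computation one space lower, with $(D,\tilde D)$ in the role of $(B,D)$, gives strong convergence in $C_\infty(\R,D)$ as well. Passing to the limit in the semigroup law, in strong continuity and in estimate \eqref{eq6thLevyFellersubBvaltimedep1} yields a strongly continuous semigroup in each space satisfying \eqref{eq1athLevyFellersubBvaltimedep3}, and \eqref{eq1thLevyFellersubBvaltimedep3} is just this definition of the limit applied to the path-integral representation \eqref{eq3thLevyFellersubBvaltimedep1}.

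It remains to identify the generator and to handle the killed subspaces. For $f$ in the core, writing $\frac1t(\Phi_t^{\nu_\ep,A}f-f)=\frac1t\int_0^t\Phi_s^{\nu_\ep,A}(L'_{\nu_\ep}+A(\cdot))f\,ds$, letting $\ep\to0$ (using $L'_{\nu_\ep}f\to L'_\nu f$ together with the strong convergence $\Phi_s^{\nu_\ep,A}\to\Phi_s^{\nu,A}$ uniform in $s\le t$) and then $t\to0$, one obtains that the generator of $\Phi_t^{\nu,A}$ contains the core and acts there as $L'_\nu+A(\cdot)$; a routine approximation, using that $L'_\nu$ is bounded $C^1_\infty\to C_\infty$, enlarges the domain to the spaces named in the theorem, namely $C^1_\infty(\R,B)\cap C_\infty(\R,D)$ (and, one layer down, $C^1_\infty(\R,D)\cap C_\infty(\R,\tilde D)$), which thereby solve the Cauchy problem \eqref{eq3-1thLevyFellersubBvaltimedep1}. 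Finally, part (ii) is immediate: each of $C_{kill(a)}([a,b],B)$, $C_{kill(a)}([a,\infty),B)$ and their $D$-analogues is invariant under every $\Phi_t^{\nu_\ep,A}$ by Theorem~\ref{thLevyFellersubBvaltimedep1}(ii) --- transparently so from representation \eqref{eq2thLevyFellersubBvaltimedep1}, since the driving paths $Z_x(\cdot)$ are non-increasing, so a path started at $x\le b$ never leaves $(-\infty,b]$ and the vanishing-to-the-left-of-$a$ structure is preserved --- whence these subspaces are invariant under the strong limit and inherit its strong continuity and bounds.

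I expect the main obstacle to be the bookkeeping in the Duhamel estimate: the perturbation $L'_{\nu_\ep}-L'_{\nu_{\ep'}}$ becomes small only as a map that loses one derivative ($C^1\to C^0$), so one must genuinely exploit the three-spaces framework of Theorem~\ref{thLevyFellersubBvaltimedep} and, in particular, the fact --- special to the regular case $M_B=M_D=1$ --- that $\Phi_s^{\nu_{\ep'},A}$ is bounded in the intermediate space $C^1_\infty(\R,B)\cap C_\infty(\R,D)$ by a constant free of $\ep'$. Securing the analogous $\ep$-uniform control one layer lower (needed for convergence in $C_\infty(\R,D)$) and the subsequent closure argument for the generator's domain are the remaining delicate points, though conceptually routine.
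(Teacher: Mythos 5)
Your proposal is correct and follows essentially the same route as the paper: uniform-in-$\ep$ bounds from the regularity assumption, a Duhamel comparison of $\Phi_t^{\nu_{\ep_1},A}$ and $\Phi_t^{\nu_{\ep_2},A}$ controlled by $\int_{\ep_2}^{\ep_1}y\,\nu(dy)$ via the loss of one derivative, density to extend the strong limit, the same scheme one level down for $C_\infty(\R,D)$, and invariance of the killed subspaces under the approximating semigroups for part (ii). The only cosmetic difference is your choice of core and the exact form of the difference quotient used to identify the generator's domain, both of which match the paper's argument in substance.
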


\begin{proof}

(i) It is similar to the proof of Proposition  \ref{propLevyFellersubBval}.
By \eqref{eq6thLevyFellersubBvaltimedep1} and \eqref{eq7thLevyFellersubBvaltimedep1} the semigroups
$\Phi_t^{\nu_{\ep},A}$ are uniformly (in $\ep$) bounded in both $C_{\infty}(\R,B)$ and
$C^1_{\infty}(\R,B)\cap C_{\infty}(\R,D)$. Estimating the difference between the actions of
$\Phi_t^{\nu_{\ep},A}$ for the two values $\ep_2<\ep_1<1$ in the usual way we get

\begin{equation}
\label{eq1thLevyFellersubBvaltimedep3}
\Phi_t^{\nu_{\ep_1},A}-\Phi_t^{\nu_{\ep_2},A}
=\int_0^t \Phi_{t-s}^{\nu_{\ep_2},A}(L_{\nu_{\ep_1}}-L_{\nu_{\ep_1}}) \Phi_s^{\nu_{\ep_1},A} \, ds.
\end{equation}
Hence, for $Y\in C^1_{\infty}(\R,B)\cap C_{\infty}(\R,D)$, we derive by \eqref{eq6thLevyFellersubBvaltimedep1}
 and \eqref{eq7thLevyFellersubBvaltimedep1} that
\[
\|(\Phi_t^{\nu_{\ep_1},A}-\Phi_t^{\nu_{\ep_2},A})Y\|_{C_{\infty}(\R,B)}
\]
\[
\le \int_0^t ds \, e^{(t-s)m_B}
\sup_x \left\|\int_{\ep_2}^{\ep_1} (\Phi_s^{\nu_{\ep_1},A}Y(x-y)-\Phi_s^{\nu_{\ep_1},A}Y(x))\nu(dy)\right\|_B
\]
\[
\le  \int_0^t ds \, e^{(t-s)m_B}\int_{\ep_2}^{\ep_1} y\nu(dy) \|\Phi_s^{\nu_{\ep_1},A}Y\|_{C^1_{\infty}(\R,B)},
\]
and thus
\[
\|(\Phi_t^{\nu_{\ep_1},A}-\Phi_t^{\nu_{\ep_2},A})Y\|_{C_{\infty}(\R,B)}
\le \int_{\ep_2}^{\ep_1} y\nu(dy)
\]
\begin{equation}
\label{eq2thLevyFellersubBvaltimedep3}
\times \int_0^t ds \, e^{(t-s)m_B} \exp\{ s[\max(m_D,m_B)+\sup_x \|A'(x)\|_{D\to B}] \} \|Y\|_{C^1_{\infty}(\R,B)\cap C_{\infty}(\R,D)},
\end{equation}
which tends to zero as $\ep_1,\ep_2 \to 0$ uniformly for $t$ from any compact set. Hence
the families $\Phi_t^{\nu_{\ep_2},A}Y$ converge, as $\ep\to 0$, for any $Y\in C^1_{\infty}(\R,B)\cap C_{\infty}(\R,D)$.
By the density argument this convergence extends to all $Y\in C_{\infty}(\R,B)$. Passing to the limit in the semigroup equation
 we derive that the limiting operators form a bounded semigroup in $C_{\infty}(\R,B)$, with the same bounds \eqref{eq6thLevyFellersubBvaltimedep1}.
 We denote this semigroup $\Phi_t^{\nu,A}$. Its strong continuity follows from the  strong continuity of $\Phi_t^{\nu_{\ep},A}$.

 Writing
 \[
 \frac{\Phi_t^{\nu,A}Y-Y}{t}=\frac{\Phi_t^{\nu_{\ep},A}Y-Y}{t}+\frac{\Phi_t^{\nu,A}Y-\Phi_t^{\nu_{\ep},A}Y}{t}
 \]
 and noting that, by \eqref{eq2thLevyFellersubBvaltimedep3}, the second term tends to zero, as $t,\ep \to 0$, we can conclude that
the space $C^1_{\infty}(\R,B)\cap C_{\infty}(\R,D)$ belongs to the domain of the semigroup $\Phi_t^{\nu,A}$ in $C_{\infty}(\R,B)$.

Finally, the same estimates as above can be performed in the space topology of $C_{\infty}(\R,D)$ showing the required properties
of $\Phi_t^{\nu,A}$ in $C_{\infty}(\R,D)$. Estimates \eqref{eq1athLevyFellersubBvaltimedep3} follow from the same estimate for
$\Phi_t^{\nu_{\ep},A}$.

(ii) The invariance of the spaces $C_{kill(a)}([a,b],B)$, $C_{kill(a)}([a,\infty),B)$, $C_{kill(a)}([a,b],D)$ and
 $C_{kill(a)}([a,\infty),D)$ under $\Phi_t^{\nu,A}$ follows from their invariance under all $\Phi_t^{\nu_{\ep},A}$.
 \end{proof}

From the point of view of numeric calculations, the limiting integral representation formula  \eqref{eq1thLevyFellersubBvaltimedep3}
seems to be most appropriate. Theoretically it is of course desirable to get rid of $\lim_{\ep \to 0}$.

\begin{theorem}
\label{thLevyFellersubBvaltimedep4}
Under the assumptions of Theorem \ref{thLevyFellersubBvaltimedep2}
formula  \eqref{eq3thLevyFellersubBvaltimedep1}
can be represented in the equivalent form
\begin{equation}
\label{eqtimeordFKfo}
\Phi_t^{\nu,A} Y(x)=\E_{\nu} [ T\exp\{\int_0^t A(Z_x(\tau)) \, d\tau \}Y(Z_x(t))],
 \end{equation}
 where $\E_{\nu}$ here means the expectation with respect to the measure on the cadlag paths of the L\'evy process
 generated by the operator $L'_{\nu}$ and started at $x$.
 \end{theorem}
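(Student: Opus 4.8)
The plan is to pass to the limit $\ep\to0$ directly inside the probabilistic representation, using the L\'evy--It\^o coupling between the compound Poisson approximations and the limiting L\'evy process. Realize everything on one probability space: let $S=(S_\tau)_{\tau\ge0}$ be the driftless subordinator with L\'evy measure $\nu$, so that $Z_x(\tau)=x-S_\tau$ is a c\`adl\`ag path of the process generated by $L'_\nu$ and started at $x$, and let $S^\ep$ be obtained from $S$ by keeping only the jumps of size $\ge\ep$, so that $Z^\ep_x(\tau)=x-S^\ep_\tau$ has the law of the compound Poisson path driven by $\nu_\ep=\1_{|y|\ge\ep}\nu$; thus $\E_{\nu_\ep}$ in \eqref{eq1thLevyFellersubBvaltimedep3} is the expectation under this coupling. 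Since $\tau\mapsto S_\tau-S^\ep_\tau=\sum_{r\le\tau,\,\Delta S_r<\ep}\Delta S_r$ is nonnegative and nondecreasing and $\E(S_t-S^\ep_t)=t\int_0^\ep y\,\nu(dy)\to0$ by \eqref{eq0diftointfraceqgennu}, one gets
\[
\sup_{\tau\le t}\bigl|Z^\ep_x(\tau)-Z_x(\tau)\bigr|=S_t-S^\ep_t\longrightarrow 0
\]
almost surely, i.e. $Z^\ep_x\to Z_x$ uniformly on $[0,t]$ almost surely.

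The second, and main, step is the pathwise continuity of the functional $F_t(Z(\cdot))=T\exp\{\int_0^tA(Z(\tau))\,d\tau\}Y(Z(t))$ along such uniformly convergent sequences of paths. For a fixed c\`adl\`ag path $Z$ the family $s\mapsto A(Z(s))$ is bounded and strongly c\`adl\`ag (only countably many jumps) both in $\LC(D,B)$ and in $\LC(\tilde D,D)$, so the backward chronological exponential $U^Z_{t,s}=T\exp\{\int_s^tA(Z(\tau))\,d\tau\}$ is well defined as the propagator generated by this family, constructed by the method of Theorem~\ref{thLevyFellersubBvaltimedep} applied to a time-dependent family; by regularity ($M_B=M_D=1$) one has $\|U^Z_{t,s}\|_{B\to B}\le e^{(t-s)m_B}$, $\|U^Z_{t,s}\|_{D\to D}\le e^{(t-s)m_D}$, and $D$ is invariant. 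Given $Z^\ep\to Z$ uniformly on $[0,t]$, first $Y(Z^\ep(t))\to Y(Z(t))$ in $B$ by continuity of $Y$; and for any $\xi\in D$ the Duhamel identity
\[
\bigl(U^{Z^\ep}_{t,0}-U^Z_{t,0}\bigr)\xi=-\int_0^t U^{Z^\ep}_{t,s}\bigl(A(Z^\ep(s))-A(Z(s))\bigr)U^Z_{s,0}\xi\,ds,
\]
together with $U^Z_{s,0}\xi\in D$ (with $D$-norm at most $e^{sm_D}\|\xi\|_D$), the pointwise-in-$s$ convergence $(A(Z^\ep(s))-A(Z(s)))U^Z_{s,0}\xi\to0$ in $B$ coming from the strong continuity of $A(\cdot)\colon\R\to\LC(D,B)$, and the uniform-in-$\ep$ bound $\|U^{Z^\ep}_{t,s}\|_{B\to B}\le e^{(t-s)m_B}$, yields $(U^{Z^\ep}_{t,0}-U^Z_{t,0})\xi\to0$ by dominated convergence; density of $D$ in $B$ together with $\|U^{Z^\ep}_{t,0}\|_{B\to B}\le e^{tm_B}$ extends this to strong convergence $U^{Z^\ep}_{t,0}\to U^Z_{t,0}$ on all of $B$. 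Combining the two pieces, $F_t(Z^\ep(\cdot))=U^{Z^\ep}_{t,0}Y(Z^\ep(t))\to U^Z_{t,0}Y(Z(t))=F_t(Z(\cdot))$ in $B$, while $\|F_t(Z(\cdot))\|_B\le e^{tm_B}\|Y\|_{C_\infty(\R,B)}$ uniformly in the path and in $\ep$.

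Bounded convergence along the coupling then gives $\Phi_t^{\nu_\ep,A}Y(x)=\E\,F_t(Z^\ep_x(\cdot))\to\E\,F_t(Z_x(\cdot))$, and comparison with the limit identity of Theorem~\ref{thLevyFellersubBvaltimedep3} identifies the right-hand side with $\Phi_t^{\nu,A}Y(x)$; this is exactly \eqref{eqtimeordFKfo}, the measure being the law of the c\`adl\`ag L\'evy process generated by $L'_\nu$. (Since both sides are bounded by $e^{tm_B}\|Y\|_{C_\infty(\R,B)}$, one could also argue for $Y$ in a convenient dense set and extend by density, but the argument above applies directly.) The hard part is the second step: making the backward chronological exponential $T\exp\{\int_0^tA(Z(\tau))\,d\tau\}$ along a general, non-piecewise-constant c\`adl\`ag driving path $Z$ rigorous through the pathwise three-Banach-space propagator construction, and proving its strong-operator continuity in $Z$ with bounds uniform over the random paths so that dominated convergence is legitimate; granted this deterministic continuity statement, the L\'evy--It\^o coupling reduces the probabilistic content to a routine passage to the limit.
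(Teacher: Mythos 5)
Your argument is correct in substance and reaches the result by a genuinely different route from the paper. The paper's own proof is a three-line appeal to soft results: it starts from the limit formula \eqref{eq1thLevyFellersubBvaltimedep3}, invokes the fact that convergence of Feller semigroups implies weak convergence of the associated Markov processes, notes that the limit is a L\'evy process with non-increasing c\`adl\`ag paths, and then cites an external theorem (Theorem 1.9.5 of \cite{Ko11}) for the convergence of propagators parametrized by c\`adl\`ag paths. You instead upgrade weak convergence to almost-sure uniform convergence by realizing all the $\nu_\ep$-approximations and the limit on one probability space via jump thinning (the monotonicity of $\ep\mapsto S_t-S^\ep_t$ plus $\E(S_t-S_t^\ep)=t\int_0^\ep y\,\nu(dy)\to 0$ does give a.s.\ convergence, not just $L^1$), and you then prove by hand, via the Duhamel identity and the three-space bounds $M_B=M_D=1$, that the functional $Z\mapsto T\exp\{\int_0^tA(Z(\tau))\,d\tau\}Y(Z(t))$ is strongly continuous under uniform convergence of paths. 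This buys self-containedness and avoids the delicate question of whether the chronological-exponential functional is continuous in the Skorokhod topology on a set of full measure, which the weak-convergence route implicitly requires; the price is that you must still construct the backward propagator $U^Z_{t,s}$ generated by the discontinuous family $s\mapsto A(Z(s))$ along the \emph{limiting} (non-piecewise-constant) path, with $D$ invariant and enough differentiability for the Duhamel formula to hold. You flag this as the hard deterministic ingredient but essentially take it for granted; it is precisely the content the paper outsources to the cited theorem, so your proof has the same single external dependency, just isolated more explicitly. (A small remark: one could close even this gap within your scheme by running the Duhamel estimate between two cutoffs $\ep_1,\ep_2$, where both propagators are finite products of exponentials, and defining $U^Z_{t,0}$ as the resulting strong limit.) Everything else --- the identification of $\E_{\nu_\ep}$ with the expectation over the coupled compound Poisson path, the uniform bound $e^{tm_B}\|Y\|_{C_\infty(\R,B)}$, and the Bochner dominated convergence --- is sound.
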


 \begin{proof}
This follows from \eqref{eq1thLevyFellersubBvaltimedep2} and three additional points: (i) convergence of Feller semigroups implies the weak
convergence of the corresponding Markov processes, (ii) the limiting process generated by $L'_{\nu}$ is a L\'evy processes, whose trajectories are
non-increasing cadlad paths, (iii) the convergence of propagators parametrized by cadlag paths, see Theorem 1.9.5 of \cite{Ko11}.
 \end{proof}

Formula \eqref{eqtimeordFKfo} is a performance of the time-ordered operator-valued Feynman-Kac formula of stochastic calculus.

As a consequence, like in the case of bounded $\nu$, we obtain the solutions to problem \eqref{eqlinder1unbgentimedep}.

\begin{theorem}
\label{thLevyFellersubBvaltimedep5}
Under the assumptions of Theorem \ref{thLevyFellersubBvaltimedep2},
the resolvent operators $R^{A,\nu}_{\la}$ of the semigroup $\Phi_t^{\nu,A}$ in the space $C_{kill(a)}([a,\infty),B)$
yielding the classical solutions to problems \eqref{eq1thLevyFellersubBvaltimedep2}
are well defined for $\la > m_B$ and are given  by the formula
\[
R^{A,\nu}_{\la}g(x)
=\E_{\nu}  \int_0^{\si_a} e^{-\la s} [ T\exp\{\int_0^t A(Z_x(\tau)) \, d\tau \}g(Z_x(t))] \, ds
\]
\begin{equation}
\label{eqtimeordFKfost}
=\lim_{\ep\to 0} \E_{\nu_{\ep}} \int_0^{\si_a} e^{-\la s} [ T\exp\{\int_0^s A(Z_x(\tau)) \, d\tau \}g(Z_x(t))] \, ds.
\end{equation}
If all semigroups generated by $A(x)$ in $B$ are contractions, problem \eqref{eqlinder1unbgentimedep} with $Y=0$ has
a unique classical solution (belonging to the domain of the generator of the semigroup $\Phi_t^{\nu,A}$ in
$C_{kill(a)}([a,b],B)$) for any $g\in C_{kill(a)}([a,b],B)$.
\end{theorem}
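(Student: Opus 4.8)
The plan is to run the same argument as in the bounded-$\nu$ case (Theorem~\ref{thLevyFellersubBvaltimedep2}), but feeding in the semigroup $\Phi_t^{\nu,A}$ produced by the limiting construction of Theorem~\ref{thLevyFellersubBvaltimedep3} instead of by the convergent perturbation series, and then rewriting its resolvent through the time-ordered Feynman--Kac formula of Theorem~\ref{thLevyFellersubBvaltimedep4}. First I would invoke Theorem~\ref{thLevyFellersubBvaltimedep3}: the operator $L'_\nu+A(\cdot)$ generates a strongly continuous semigroup $\Phi_t^{\nu,A}$ in $C_\infty(\R,B)$ with $\|\Phi_t^{\nu,A}\|_{\LC(C_\infty(\R,B))}\le e^{tm_B}$, and $C_{kill(a)}([a,\infty),B)$ is a closed invariant subspace (Theorem~\ref{thLevyFellersubBvaltimedep3}(ii)); hence the restriction of $\Phi_t^{\nu,A}$ to it is a strongly continuous semigroup with the same growth bound. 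Consequently, for $\la>m_B$ the Laplace transform $R^{A,\nu}_\la=\int_0^\infty e^{-\la t}\Phi_t^{\nu,A}\,dt$ converges in operator norm, maps $C_{kill(a)}([a,\infty),B)$ into the domain of the generator, and equals $(\la-L'_\nu-A(\cdot))^{-1}$. On functions vanishing to the left of $a$ one has $L'_\nu f=-D^{(\nu)}_{a+}f$, so the restricted generator is $A(\cdot)-D^{(\nu)}_{a+}$ and $R^{A,\nu}_\la g$ is exactly the classical (in-domain) solution of \eqref{eq1thLevyFellersubBvaltimedep2}; uniqueness follows from injectivity of $\la$ minus the generator.

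Next I would substitute the representation \eqref{eqtimeordFKfo} into $\int_0^\infty e^{-\la t}\Phi_t^{\nu,A}\,dt$ and exchange the $t$-integration with the expectation $\E_\nu$. The interchange is justified by Fubini, since $\|T\exp\{\int_0^t A(Z_x(\tau))\,d\tau\}\|_{\LC(B)}\le e^{m_Bt}$ (equal to $1$ in the contractive case) makes the double integral absolutely convergent for $\la>m_B$; this produces the first display for $R^{A,\nu}_\la g$ in the statement, and the $\lim_{\ep\to 0}$ form follows by inserting the approximating representation of Theorem~\ref{thLevyFellersubBvaltimedep3} for $\Phi_t^{\nu_\ep,A}$ and using the uniform bound $\|\Phi_t^{\nu_\ep,A}\|\le e^{tm_B}$ with dominated convergence. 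To replace $\int_0^\infty$ by $\int_0^{\si_a}$ I would use that the paths $Z_x(\cdot)$ of the process generated by $L'_\nu$ are non-increasing, so $Z_x(t)\le a$ for every $t\ge\si_a$; since $g\in C_{kill(a)}$ vanishes on $(-\infty,a]$, the integrand $T\exp\{\int_0^t A(Z_x(\tau))\,d\tau\}g(Z_x(t))$ is identically zero for $t\ge\si_a$, and the integral collapses to $[0,\si_a)$.

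For the last assertion ($Y=0$, $\la=0$, finite interval $[a,b]$, contractive $e^{tA(x)}$) I would argue as in Proposition~\ref{propLevyFellersubpotenfund1}(iii). Contractivity allows $m_B=0$, so the previous steps already solve the problem for every $\la>0$; it remains to push $\la$ down to $0$ on $C_{kill(a)}([a,b],B)$. There $R^{A,\nu}_0 g(x)=\E_\nu\int_0^{\si_a}T\exp\{\int_0^t A(Z_x(\tau))\,d\tau\}g(Z_x(t))\,dt$ is a bounded operator, because contractivity gives $\|R^{A,\nu}_0 g(x)\|\le\|g\|\,\E_x\si_a$ and $\E_x\si_a=\int_0^\infty\P(Z_x(t)>a)\,dt=U^{(\nu)}([0,x-a))$, which is finite for $x\in[a,b]$ since the potential measure $U^{(\nu)}$ is finite on compacts. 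As $C_{kill(a)}([a,b],B)$ is invariant under $\Phi_t^{\nu,A}$ and the potential operator of the restricted semigroup is bounded, its range is the domain of the restricted generator, and $R^{A,\nu}_0 g$ is the unique classical solution of \eqref{eqlinder1unbgentimedep} with $Y=0$.

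On the difficulty: once Theorems~\ref{thLevyFellersubBvaltimedep3} and \ref{thLevyFellersubBvaltimedep4} are in hand, every step is routine bookkeeping, and the only point needing real care is the $\la=0$ case on $[a,b]$: one must verify that the truncation at $\si_a$ survives the limit $\ep\to 0$ and that the comparison principle \eqref{eqcomppotmes} applied to $\nu_\ep\le\nu$ yields the $\ep$-uniform bound $\E_{\nu_\ep}\si_a\le U^{(\nu)}([0,x-a])$, which supplies both the domination for passing to the limit and the $\ep$-independent boundedness of $R^{A,\nu}_0$. That is essentially the only obstacle.
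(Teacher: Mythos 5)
Your proposal is correct and follows essentially the same route as the paper, which presents this theorem as a direct consequence of Theorems \ref{thLevyFellersubBvaltimedep3} and \ref{thLevyFellersubBvaltimedep4} combined with the resolvent argument of the bounded-$\nu$ case: Laplace-transform the semigroup for $\la>m_B$, insert the chronological Feynman--Kac representation, truncate at $\si_a$ using the monotonicity of the paths and the vanishing of $g$ left of $a$, and handle $\la=0$ on $[a,b]$ via the boundedness of the potential operator coming from the finiteness of $U^{(\nu)}$ on compacts. Your filling-in of the $\ep$-uniform domination via the comparison principle is consistent with the paper's framework and closes the only step the paper leaves implicit.
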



As usual, formula \eqref{eqtimeordFKfost} yields also the generalized solutions,
by approximations or duality, to problems \eqref{eq1thLevyFellersubBvaltimedep2},
if $g$ is any bounded measurable function $[a,\infty) \to B$.

Again as usual, one defines solutions to problem \eqref{eqlinder1unbgentimedep} with arbitrary $Y$,
by shifting, that is, as the function $\mu(x)=Y+u(x)$, where $u$ solves the problem
 \begin{equation}
\label{eqCapgenshift}
D^{(\nu)}_{a+*}u(x)=A(x) u(x)+A(x)Y +g(x), \quad \mu(a)=0, \quad x\ge a.
\end{equation}
This leads to the following.

\begin{corollary}
We conclude that under the assumptions of Theorem \ref{thLevyFellersubBvaltimedep2},
if all semigroups generated by $A(x)$ in $B$ are contractions, problem \eqref{eqlinder1unbgentimedep} has
the unique generalized solution
\begin{equation}
\label{eqtimeordFKfost1}
\mu(x)=Y+\E_{\nu} \int_0^{\si_a} [ T\exp\{\int_0^s A(Z_x(\tau)) \, d\tau \}(A(Z_x(t))Y+g(Z_x(t)))] \, ds,
\end{equation}
for any $Y\in D$ and a bounded measurable curve $g: [a,\infty) \to B$.
\end{corollary}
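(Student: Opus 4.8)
The plan is to obtain this corollary from Theorem \ref{thLevyFellersubBvaltimedep5} by the same shifting device used throughout the paper, so that essentially no new analysis is required. By the definition of the solution to \eqref{eqlinder1unbgentimedep} adopted above, one sets $\mu(x)=Y+u(x)$, where $u$ solves the vanishing-boundary problem \eqref{eqCapgenshift}; but \eqref{eqCapgenshift} is nothing but problem \eqref{eq1thLevyFellersubBvaltimedep2} with $\la=0$ and with the source $g$ replaced by $\tilde g(x):=A(x)Y+g(x)$. Hence it suffices to verify that $\tilde g$ is an admissible source and then to apply Theorem \ref{thLevyFellersubBvaltimedep5} to it at $\la=0$.

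The first step is to check that $x\mapsto A(x)Y$ is a bounded continuous curve $[a,\infty)\to B$; this is precisely where the hypothesis $Y\in D$ is used. By the standing assumptions inherited from Theorem \ref{thLevyFellersubBvaltimedep}, the family $\{A(x)\}$ is uniformly bounded in $\LC(D,B)$ and strongly continuous in $x$, so $x\mapsto A(x)Y$ is continuous and $\sup_x\|A(x)Y\|_B\le(\sup_x\|A(x)\|_{D\to B})\,\|Y\|_D<\infty$. Consequently $\tilde g=A(\cdot)Y+g$ is again a bounded measurable curve $[a,\infty)\to B$ whenever $g$ is, and it lies in $C_{kill(a)}([a,b],B)$ whenever $g$ and $A(\cdot)Y$ are continuous and $g$ vanishes at $a$.

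Next I would invoke Theorem \ref{thLevyFellersubBvaltimedep5} for the source $\tilde g$. Since every $e^{tA(x)}$ is a contraction, the chronological exponential $T\exp\{\int_0^sA(Z_x(\tau))\,d\tau\}$ has norm at most one, which is exactly the situation in which Theorem \ref{thLevyFellersubBvaltimedep5} allows the endpoint value $\la=0$ on $C_{kill(a)}([a,b],B)$ (the relevant finiteness, $\E_\nu\si_a<\infty$, being the boundedness of the $\la=0$ potential operator on $C_{kill(a)}([a,b])$ from Proposition \ref{propLevyFellersubpotenfund1}). Thus $u(x)$ is given by formula \eqref{eqtimeordFKfost} with $\la=0$ and $\tilde g$ in place of $g$: first for $g\in C_{kill(a)}([a,b],B)$ as a classical solution belonging to the domain of the generator of $\Phi_t^{\nu,A}$ in $C_{kill(a)}([a,b],B)$, and then, by the approximation/duality extension recorded after Theorem \ref{thLevyFellersubBvaltimedep5}, for an arbitrary bounded measurable $g$ as a generalized solution. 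Adding back $Y$ turns the resulting expression into exactly \eqref{eqtimeordFKfost1}.

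For uniqueness I would note that $u\mapsto Y+u$ is a bijection between the (generalized) solutions of \eqref{eqCapgenshift} and those of \eqref{eqlinder1unbgentimedep}, and that this bijection is compatible with both the by-approximation and the duality notions: approximating $g$ by $g^n\in C_{kill(a)}([a,b],B)$ produces approximants $\tilde g^n=A(\cdot)Y+g^n$ of $\tilde g$ of the same type, whose classical solutions converge pointwise. Uniqueness for \eqref{eqlinder1unbgentimedep} then follows from the uniqueness of the generalized solution of \eqref{eqCapgenshift} furnished by Theorem \ref{thLevyFellersubBvaltimedep5}. The only point requiring a little care — and the nearest thing to an obstacle here — is precisely this transport of the generalized-solution notion through the shift, i.e. confirming that the classical solutions of the smooth approximants behave as claimed and that the limits coincide; all of this is routine given the earlier propositions, the substantive work having already been carried out in Theorem \ref{thLevyFellersubBvaltimedep5}.
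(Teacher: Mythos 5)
Your proposal is correct and follows essentially the same route the paper intends: the paper states the corollary as an immediate consequence of the shifting definition $\mu=Y+u$ via problem \eqref{eqCapgenshift} together with Theorem \ref{thLevyFellersubBvaltimedep5} at $\la=0$ and the remark extending \eqref{eqtimeordFKfost} to bounded measurable sources. Your explicit verification that $Y\in D$ makes $x\mapsto A(x)Y$ a bounded continuous curve in $B$ is exactly the detail the paper leaves implicit.
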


\begin{remark}
(i) Assuming some regularity on $\nu$, like in Proposition \ref{generlinfraccomp1}
one can relax the assumption of $A(x)$. Various additional regularity properties of solutions can be obtained
by assuming some smoothing properties of the semigroups $e^{tA(x)}$.

(ii) Assuming the existence of the bounded second derivative $A''(x)$ would allow one to show that the space
$C^1_{\infty}(\R,B)\cap C_{\infty}(\R,D)$ represents an invariant core for $\Phi^{\nu,A}_t$.

(iii) Backward time-ordered exponential $T\exp\{\int_s^t A(Z_x(\tau)) \, d\tau \}$ represents the
 backward propagator $U^{s,t}$ solving the backward Cauchy problem
\begin{equation}
\label{eqtimeordFKfost2}
\dot f_s(x)= - A(Z_x(s))f_s(x), \quad s\le t,
\end{equation}
with the given terminal condition $f_t$, where the family  $A(Z_x(t))$ is bounded (as operators $D\to B$),
but discontinuous in $t$. However, by the property of L\'evy processes, it has at most countable discontinuity-set.
\end{remark}

\section{Basic examples}
\label{secexam}

Let us present some examples, when basic formula  \eqref{eqtimeordFKfost1} is applicable.
For better fit to the customary notations, we shall use the letter $t$ for the argument, rather than $x$ used above,
where $t$ was used as the time variable in the auxiliary semigroups.

(i) Generalized {\it fractional Schr\"odinger equation}\index{fractional Schr\"odinger equation}
with time-dependent Hamiltonian and generalized fractional derivative:
\begin{equation}
\label{eqdefgenfracSchrext1}
D^{(\nu)}_{a+*} \psi_t=-i H(t) \psi_t,
\end{equation}
where $H(t)$ is a family of self-adjoint operators in a Hilbert space $\HC$ such that the unitary groups generated by $H(t)$ have
 a common domain $D\subset H$, where they are regular in the sense of the second condition of \eqref{eqregboun}.

 The simplest concrete example represent the Hamiltonians $H(t)=-\De +V(t,x)$ with $V(t,.)\in C^2(\R^d)$,
 where $D$ can be chosen as the Sobolev space $H^2_2(\R^d)$.

Similarly one can deal with {\it fractional Schr\"odinger equation}\index{fractional Schr\"odinger equation} with the complex parameter
\begin{equation}
\label{eqdefgenfracSchrext2}
D^{(\nu)}_{a+*} \psi_t=\si H(t) \psi_t,
\end{equation}
if $H$ is a negative operator and $\si$ is a complex number with a non-negative real part,
and where again a common domain $D\subset H$ exists such that the semigroups generated by $\si H(t)$ are regular.
 In both cases, formula  \eqref{eqtimeordFKfost1} is applicable.
Specific examples of these equations were analyzed recently in \cite{GorSchrod17}.

(ii) Generalized {\it fractional Feller evolution}\index{fractional Feller evolution},
where each $A(t)$ in \eqref{eqCapgenshift} generates a Feller semigroup in $C_{\infty}(\R^d)$, again with the additional property that
the semigroups generated by $A(t)$ act regularly in their invariant cores $D$ that can be often taken as $C^1_{\infty}(\R^d)$
(for operators of at most first order) or $C^2_{\infty}(\R^d)$ (e.g. for diffusions).

(iii) Generalized fractional evolutions generated by $\Psi$DOs with spatially homogeneous symbols (or with constant coefficients):
\begin{equation}
\label{eqlinpsidoconstfrreprep}
D^{(\nu)}_{a+*} f_t=-\psi_t(-i\nabla) f_t +g_t, \quad f|_{t=a}=f_a,
\end{equation}
under various assumptions on symbols $\psi_t(p)$ ensuring that $-\psi_t(-i\nabla)$ generates a semigroup.
 In this case propagators solving \eqref{eqtimeordFKfost2}
are constructed explicitly via the Fourier transform.
For instance, formula  \eqref{eqtimeordFKfost} for the solution of \eqref{eqlinpsidoconstfrreprep} with $f_a=0$ becomes
\begin{equation}
\label{eqtimeordFKfostPsido}
R^{A,\nu}_0g(t,w)
=\E_{\nu} \int_0^{\si_a} \int_{\R^d} G^{\psi,Z}_{s,0}(w-v) g_{Z_t(s)}(v) \, dv  \, ds,
\end{equation}
where
\begin{equation}
\label{eqtimeordFKfostPsido1}
G^{\psi,Z}_{s,0}(w) =\frac{1}{(2\pi)^d} \int e^{ipw} \exp \{ -\int_0^s \psi_{Z_t(\tau)} (p)\, d\tau \} \, dp.
\end{equation}

\end{document}